\DeclareRobustCommand{\lyxsout}[1]{\ifx\\#1\else\sout{#1}\fi}
\numberwithin{equation}{section}
\numberwithin{figure}{section}
\newlength{\lyxlabelwidth}      
\theoremstyle{remark}
\newtheorem*{rem*}{\protect\remarkname}
\theoremstyle{plain}
\newtheorem{thm}{\protect\theoremname}[section]
\theoremstyle{definition}
\newtheorem{defn}[thm]{\protect\definitionname}
\theoremstyle{definition}
\newtheorem*{example*}{\protect\examplename}
\theoremstyle{plain}
\newtheorem{prop}[thm]{\protect\propositionname}
\theoremstyle{plain}
\newtheorem{lem}[thm]{\protect\lemmaname}
	\newenvironment{elabeling}[2][]%
	{\settowidth{\lyxlabelwidth}{#2}
		\begin{description}[font=\normalfont,style=sameline,
			leftmargin=\lyxlabelwidth,#1]}
	{\end{description}}
\theoremstyle{plain}
\newtheorem{cor}[thm]{\protect\corollaryname}
\providecommand{\corollaryname}{Corollary}
\providecommand{\definitionname}{Definition}
\providecommand{\examplename}{Example}
\providecommand{\lemmaname}{Lemma}
\providecommand{\propositionname}{Proposition}
\providecommand{\remarkname}{Remark}
\providecommand{\theoremname}{Theorem}
\begin{document}
\global\long\def\A{\mathcal{A}}%

\global\long\def\B{\mathcal{B}}%

\global\long\def\I{\mathcal{I}}%

\global\long\def\R{\mathbb{R}}%

\global\long\def\Q{\mathbb{Q}}%

\global\long\def\Z{\mathbb{Z}}%

\global\long\def\N{\mathbb{N}}%

\global\long\def\C{\mathbb{C}}%

\global\long\def\Z{\mathbb{Z}}%

\global\long\def\D{\mathcal{D}}%

\global\long\def\U{\mathcal{U}}%

\global\long\def\P{\mathbb{P}}%

\global\long\def\G{\mathbb{G}}%

\global\long\def\Y{\mathcal{Y}}%

\global\long\def\mult{\mathcal{M}}%

\global\long\def\ef{\varphi}%

\global\long\def\primlat{\mathcal{\Z}_{\text{prim}}^{k,d}}%

\global\long\def\primlatof#1{\mathcal{\Z_{\text{prim}}^{\text{\text{#1},d}}}}%

\global\long\def\zprim{\mathcal{\Z}_{prim}^{d}}%

\global\long\def\T{\mathbb{T}}%

\global\long\def\RND{\text{RND}}%

\global\long\def\df{\overset{\text{def}}{=}}%

\title{on the image in the torus of sparse points on dilating analytic curves}
\author{Michael Bersudsky}
\begin{abstract}
It is known that the image in $\R^{2}/\Z^{2}$ of a circle of radius
$\rho$ in the plane becomes equidistributed as $\rho\to\infty$.
We consider the following sparse version of this phenomenon. Starting
from a sequence of radii $\left\{ \rho_{n}\right\} _{n=1}^{\infty}$
which diverges to $\infty$ and an angle $\omega\in\R/\Z,$ we consider
the projection to $\R^{2}/\Z^{2}$ of the $n$'th roots of unity rotated
by angle $\omega$ and dilated by a factor of $\rho_{n}$. We prove
that if $\rho_{n}$ is bounded polynomially in $n$, then the image
of these sparse collections becomes equidistributed, and moreover,
if $\rho_{n}$ grows arbitrarily fast, then we show that equidistribution
holds for almost all $\omega$. Interestingly, we found that for any
angle there is a sequence of radii growing to $\infty$ faster then
any polynomial for which equidistribution fails dramatically. In greater
generality, we prove this type of results for dilations of varying
analytic curves in $\R^{d}$. A novel component of the proof is the
use of the theory of o-minimal structures to control exponential sums.
\end{abstract}

\maketitle

\section{Introduction}

\thispagestyle{empty}

\let\thefootnote\relax\footnotetext{This project has received funding from the European Research Council  (ERC) under the European Union's Horizon 2020 research and innovation programme (grant agreement No. 754475). The author also acknowledges the support of ISF grant 871/17.} 

To put our work in context we first note the following general problem.
Consider a Lie group $G$, let $\Gamma\leq G$ be a lattice and let
$\pi:G\to G/\Gamma$ be the natural projection. Assume that $\gamma_{T}:[0,1]\to G$,
$T\in\R_{>0}$ is a family of curves which expand as $T\to\infty$.
Then a natural question that arrises is what are the weak-{*} limits
of the probability measures on $G/\Gamma$ given by $\mu_{T}(f)\df\int_{0}^{1}f(\pi(\gamma_{T}(s)))ds$
for $f\in C_{c}(G/\Gamma)$, as $T\to\infty$.

The above question was extensively studied in recent years, see\textbf{
}for example the following\textbf{ }(not complete) list \cite{Randol,Nimish_curves_spaceoflattices,Niimish_geodesic_dil_hyp,Bjurkland_fish_dilations,LYang_Israel_jour_2016,KSS18,Khalil2020_curves,PYang_curve_2020}.

In this paper we consider in the Euclidean setting a natural discrete
analogue of the above. Namely, we consider sequences of expanding
curves in $\R^{d}$ and we prove statements concerning the distribution
of the projection to the $d$-torus $\T^{d}\df\R^{d}/\Z^{d}$ of probability
counting measures supported on \emph{discrete subsets }of those curves.
We note that problems similar in flavor to the ones studied in this
paper were considered in the setting of hyperbolic surfaces for translations
of horocycles in \cite{Strom_mark_kron_seq} and more recently in
\cite{Burrin_shap_Yu}.

The main results of our paper are described roughly as follows. We
show that there is a certain threshold for the sparsity of the sampled
discrete points so that if it is not crossed, then their images in
the $d$-torus become equidistributed (Theorem \ref{thm:all omega})
and if this threshold is crossed, then the mentioned equidistribution
might fail (Theorems \ref{lem:failure for rnd of order k} and \ref{lem:failure for RND}).
Nevertheless, we have found that when there is no restriction on the
level of sparsity, the equidistribution is still generic with respect
to a certain perturbation (Theorem \ref{thm:almost all omega}).

\subsection{Preliminaries to the main results}

In the following we discuss the types of curves which we study in
the paper and introduce some notations and conventions.

Using the data of an analytic function $\phi:\left[0,1\right]^{m}\times[0,1]\to\R^{d}$,
a sequence $\left\{ \rho_{n}\right\} _{n=1}^{\infty}\subseteq\R$
which diverges to $\infty$ and a sequence $\left\{ \mathbf{x}_{n}\right\} _{n=1}^{\infty}\subseteq[0,1]^{m}$,
we consider the curves
\begin{equation}
\gamma_{n}(t)\overset{\text{def}}{=}\rho_{n}\phi(\mathbf{x}_{n},t),\ t\in\left[0,1\right].\label{eq:curves}
\end{equation}

Our main goal will be to describe the weak-{*} limits of sequences
of probability counting measures on $\T^{d}\df\R^{d}/\Z^{d}$ of the
form
\begin{equation}
\mu_{n}\overset{\text{def}}{=}\frac{1}{n}\sum_{k=1}^{n}\delta_{\pi\left(\gamma_{n}(k/n)\right)},\label{eq:measures of interest}
\end{equation}
where $\pi:\R^{d}\to\R^{d}/\Z^{d}$ denotes the natural projection.

We will say that $\left\{ \mu_{n}\right\} _{n=1}^{\infty}$ equidistribute
if
\[
\lim_{n\to\infty}\mu_{n}(f)=\int_{\T^{d}}f\ d\mathbf{x},\ \ \forall f\in C\left(\T^{d}\right),
\]
where $d\mathbf{x}$ denotes the normalized Haar measure.
\begin{rem*}
In all cases we consider, the sequence of continuous curves equidistributes,
namely 
\[
\lim_{n\to\infty}\int_{0}^{1}f(\pi(\gamma_{n}(t)))dt=\int_{\T^{d}}f\ d\mathbf{x},\ \forall f\in C(\T^{d}).
\]
In qualitative terms, when the points $\left\{ \gamma_{n}(k/n)\right\} _{k=1}^{n}$
don't become sparse on the continuous curves $\gamma_{n}$ as $n\to\infty$,
then one may deduce the equidistribution of (\ref{eq:measures of interest})
from the equidistribution of the continuous curves, but when the points
$\left\{ \gamma_{n}(k/n)\right\} _{k=1}^{n}$ become very sparse on
the continuous curves $\gamma_{n}$ as $n\to\infty$, then it is no
longer possible to use the continuous equidistribution to study the
discrete one. We note that the sparsity of the discrete points $\left\{ \gamma_{n}(\frac{k}{n})\right\} _{k=1}^{n}$
can be measured by the rate of divergence of ratio $\rho_{n}/n$ to
$\infty$ as $n\to\infty.$
\end{rem*}
We now describe a rationality property of curves which will be used
to give conditions for equidistribution. Given $j\in\N$ and a smooth
curve $\gamma:\left[0,1\right]\to\R^{d}$, we define
\[
\mathbf{\gamma}^{(j)}(t)\overset{\text{def}}{=}\left(\gamma_{1}^{(j)}(t),..,\gamma_{d}^{(j)}(t)\right),\ t\in[0,1],
\]
where $\gamma_{i}^{(j)}$ is the $j$-th derivative of $\gamma_{i}$.
\begin{defn}
\label{def:rnd conditions}We say that a smooth curve $\gamma:\left[0,1\right]\to\R^{d}$
is \emph{rationally non-degenerate of order} $\kappa\in\N\cup\{\infty\}$,
if 
\[
\kappa=\sup\left\{ j\in\N\mid\left\langle \mathbf{h},\gamma^{\left(j\right)}(t)\right\rangle \text{ is a non-zero function in \ensuremath{t}, \ensuremath{\forall}\ensuremath{\ensuremath{\mathbf{h}\in\Z^{d}\smallsetminus\left\{ \mathbf{0}\right\} }}}\right\} ,
\]
 and we say that a \emph{family} of smooth curves $\phi:\left[0,1\right]^{m}\times[0,1]\to\R^{d}$
is rationally non-degenerate of order $\kappa\in\N\cup\left\{ \infty\right\} $
if 
\[
\kappa=\inf\left\{ \text{order of \ensuremath{\phi(\mathbf{x},\cdot)}}\mid\mathbf{x\in}[0,1]^{m}\right\} .
\]
\end{defn}

We will abbreviate the term ``rationally non-degenerate'' by $\RND$
throughout the text.

\subsection{Equidistribution for polynomial sparsity}

The first main result we would like to discuss concerns the conditions
for the equidistribution of the measures of the form (\ref{eq:measures of interest}).
Our theorem states that the higher the non-degeneracy of the curves
is, the sparser we can sample the curves to obtain equidistribution.
If the order of non-degeneracy is $\infty$, then $\rho_{n}$ is allowed
to grow at an arbitrary fixed polynomial rate to assure that the sequence
of measures (\ref{eq:measures of interest}) equidistributes.
\begin{thm}
\label{thm:all omega} Let $\phi:\left[0,1\right]^{m}\times[0,1]\to\R^{d}$
be a family of $\RND$ analytic curves of order $\kappa$, with $2\leq\kappa\leq\infty$.
Assume that $\rho_{n}\to\infty$ such that:
\begin{itemize}
\item $\rho_{n}=o(n^{\kappa})$ if $\kappa<\infty$,
\item $\left\{ \rho_{n}\right\} _{n=1}^{\infty}$ grows polynomially if
$\kappa=\infty$, namely, there exists $l\in\N$ such that $\rho_{n}=o(n^{l})$,
\end{itemize}
and let $\left\{ \mathbf{x}_{n}\right\} _{n=1}^{\infty}\subseteq[0,1]^{m}$.
Then, the sequence of measures given by \emph{(\ref{eq:measures of interest})}
for the curves \emph{(\ref{eq:curves})} equidistributes.
\end{thm}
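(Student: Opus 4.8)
The plan is to use Weyl's equidistribution criterion: it suffices to show that for every fixed $\mathbf{h}\in\Z^{d}\smallsetminus\{\mathbf{0}\}$ we have $\widehat{\mu_{n}}(\mathbf{h})=\frac{1}{n}\sum_{k=1}^{n}e\!\left(\langle\mathbf{h},\gamma_{n}(k/n)\rangle\right)\to 0$, where $e(x)\df e^{2\pi i x}$. Writing $g_{n}(t)\df\langle\mathbf{h},\phi(\mathbf{x}_{n},t)\rangle$, the exponential sum becomes $\frac{1}{n}\sum_{k=1}^{n}e\!\left(\rho_{n}\,g_{n}(t)\big|_{t=k/n}\right)$, i.e.\ a discrete Weyl sum for the real-analytic phase $\psi_{n}(t)\df\rho_{n}g_{n}(t)$ sampled at the points $k/n$. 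The $\RND$ hypothesis of order $\kappa$ guarantees that for each $\mathbf{x}_{n}$ the function $g_{n}$ is not identically zero and that some derivative $g_{n}^{(j)}$ with $1\le j\le\kappa$ is not identically zero; combined with analyticity, $g_{n}^{(j)}$ has only finitely many zeros on $[0,1]$, with a bound on their number that is uniform in $\mathbf{x}_{n}$ (this uniformity is where I expect to invoke the o-minimality machinery the abstract advertises, to get a single $N$ bounding the number of zeros of $g^{(j)}(\mathbf{x},\cdot)$ over all $\mathbf{x}\in[0,1]^{m}$, and likewise uniform lower bounds away from those zeros).

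Next I would estimate the Weyl sum by a van der Corput / Vinogradov-type argument. The key point is that, away from a controlled neighbourhood of the zero set of $g_{n}^{(\kappa')}$ (for a suitable $\kappa'\le\kappa$), we have $|\psi_{n}^{(\kappa')}(t)| = \rho_{n}\,|g_{n}^{(\kappa')}(t)|\gg \rho_{n}$, so the phase has a large $\kappa'$-th derivative. The standard estimate for $\sum_{k=1}^{n}e(\psi(k/n))$ in terms of $\min_{t}|\psi^{(\kappa')}(t)|$ and $\max_{t}|\psi^{(\kappa')}(t)|$ then yields a bound of the shape $n\cdot\big((\rho_{n}/n^{\kappa'})^{-c}+(\rho_{n}/n^{\kappa'})^{c'}\big)$ up to the uniform combinatorial constants. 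For this to be $o(n)$ we need $\rho_{n}/n^{\kappa'}\to\infty$ \emph{or} $\to 0$ appropriately; choosing $\kappa'=\kappa$ when $\kappa<\infty$ and exploiting $\rho_{n}=o(n^{\kappa})$ gives decay, while for $\kappa=\infty$ the polynomial bound $\rho_{n}=o(n^{l})$ lets us pick $\kappa'=l+1$ (the curves are $\RND$ of every order, so this derivative is still nondegenerate) and again obtain $\rho_{n}/n^{\kappa'}\to 0$. The contribution of the bad set — a union of at most $N$ intervals of small length around zeros of $g_{n}^{(\kappa')}$ — is handled either by shrinking those intervals and bounding the sum there trivially by their total length times $n$, or by passing to a lower-order derivative on each such interval and iterating; an inductive scheme over the order of the first nonvanishing derivative, as in classical proofs of Weyl's inequality for polynomials, packages this cleanly.

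The main obstacle, and the place the argument needs genuine new input, is the \emph{uniformity in $n$}: because the phase function $g_{n}=\langle\mathbf{h},\phi(\mathbf{x}_{n},\cdot)\rangle$ varies with $n$ through both $\mathbf{h}$ (fixed, so fine) and $\mathbf{x}_{n}$ (varying in a compact set, the real issue), a naive van der Corput estimate would have constants depending on $\mathbf{x}_{n}$ that could degenerate. I would therefore first prove a lemma, uniform over $\mathbf{x}\in[0,1]^{m}$, asserting: (i) a uniform upper bound $B$ on the number of zeros in $[0,1]$ of $t\mapsto g^{(j)}(\mathbf{x},t)$ for each relevant $j$, and (ii) for each $\varepsilon>0$ a uniform lower bound $\delta(\varepsilon)>0$ such that $|g^{(\kappa')}(\mathbf{x},t)|\ge\delta(\varepsilon)$ whenever $t$ is at distance $\ge\varepsilon$ from the zero set. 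Statement (i) follows from o-minimality (the family $\{(\mathbf{x},t):g^{(j)}(\mathbf{x},t)=0\}$ is definable in $\R_{\mathrm{an}}$, and definable families have uniformly bounded fibers), and (ii) follows from (i) together with compactness and analyticity (a standard łojasiewicz-type compactness argument, or again directly from definable choice). With this lemma in hand the van der Corput estimate has absolute constants, the bad-set contribution is $O(B\varepsilon)$ uniformly, and letting $n\to\infty$ then $\varepsilon\to 0$ completes the proof.
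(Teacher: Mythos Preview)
Your strategy --- Weyl's criterion, a van der Corput derivative test, and o-minimality for uniform-in-$\mathbf{x}$ control of the sub-level sets of the amplitude --- is exactly the paper's. The gap is in committing to a \emph{single} derivative order $\kappa'$. The $j$-th derivative van der Corput bound, applied to $k\mapsto\rho\langle\mathbf{h},\phi(\mathbf{x},k/n)\rangle$ and divided by $n$, has the shape
\[
(\rho/n^{j})^{\tau_{j}}\;+\;(\rho/n^{j})^{-\tau_{j}}\,n^{-2^{2-j}},\qquad\tau_{j}=(2^{j}-2)^{-1},
\]
and \emph{both} terms tend to $0$ only when $n^{\,j-4+2^{3-j}}\ll\rho\ll n^{j}$. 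For $j=2,3$ the lower threshold is $n^{0}$ and any $\rho_n\to\infty$ works, but for $j\ge4$ it is a genuine positive power of $n$. Hence taking $\kappa'=\kappa$ fails whenever $\kappa\ge4$ and $\rho_n$ grows slowly (e.g.\ $\rho_n=\log n$), and taking $\kappa'=l+1$ fails for sequences of mixed growth (e.g.\ $\rho_n=n$ on even $n$, $\rho_n=n^{10}$ on odd $n$, $l=11$). Your ``pass to a lower-order derivative and iterate'' remark is aimed at the local bad set near zeros of $g_n^{(\kappa')}$, not at this global mismatch between the size of $\rho_n$ and the chosen order.

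The paper's fix is to let the order track $\rho_n$: writing $\rho=n^{\lambda}$ it sets $j_n(\lambda)\approx\lceil\lambda\rceil+1$, clamped to $\{2,\dots,l\}$, so that $j_n-\lambda$ always lies in the admissible window, and then chooses $\epsilon_n=n^{-\nu_n(\lambda)}$ to balance the two van der Corput terms against the bad-set error $\epsilon$. A secondary point: your compactness-only Łojasiewicz bound $\delta(\varepsilon)>0$ would actually suffice once the variable-$j$ issue is handled (fix $\varepsilon$, send $n\to\infty$ along each of the finitely many $j$-classes, then let $\varepsilon\to0$); the paper instead uses polynomial boundedness of $\R_{an}$ to get the quantitative form $\delta(\varepsilon)=\varepsilon^{\alpha}$, which lets it send $\varepsilon\to0$ together with $n$ and produce a single error $E_n\to0$ uniform over the whole range $\rho\in[n^{\delta_n},n^{l-\eta_n}]$.
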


\begin{rem*}
It is an artifact of our proof that Theorem \ref{thm:all omega} is
stated for $\kappa\geq2$ and not for all $\kappa\in\N$. It seems
to us that Theorem \ref{thm:all omega} with $\kappa=1$ is also true,
yet, since for $\rho_{n}=o(n)$ the points $\left\{ \rho_{n}\phi(\mathbf{x}_{n},\frac{k}{n})\right\} _{k=1}^{n}$
don't get sparse on the curves $\rho_{n}\phi(\mathbf{x}_{n},t)$,
we didn't make the effort to include a proof.
\end{rem*}

We find the following examples to be noteworthy.
\begin{example*}
Let $\psi:\left[0,1\right]^{m}\to\text{GL}{}_{2}(\R)$ be an analytic
map, namely, the entries of the matrices $\psi(\mathbf{x})$ are analytic
functions. Then the family of ellipses 
\[
\phi(\mathbf{x},t)=\left(\cos(2\pi t),\sin(2\pi t)\right)\cdot\psi(\mathbf{x}),
\]
 is a $\RND$ family of analytic curves of order $\infty$ to which
we may apply Theorem \ref{thm:all omega}.
\end{example*}
\begin{example*}
The following example shows that the way expanding curves are sampled
can have dramatic effects on the equidistribution of (\ref{eq:measures of interest}).

Let $\alpha$ be an irrational number and consider the following two
parameterizations of the line segment $\{(t,\alpha t)\mid t\in[0,1]\}$,
\[
\gamma_{1}(t)=(t,\alpha t),\ \ \gamma_{2}(t)=\left(\sin\left(\frac{\pi}{2}t\right),\alpha\sin\left(\frac{\pi}{2}t\right)\right),\ t\in\left[0,1\right].
\]
The important distinction between the two curves is that $\gamma_{1}$
is $\RND$ of order 1 and $\gamma_{2}$ is $\RND$ of order $\infty$.
Let $\rho_{n}=n^{\kappa}$ for an arbitrary $\kappa\in\N$, then $\left\{ \pi\left(\rho_{n}\gamma_{1}(k/n)\right)\right\} _{k=1}^{n}$
\emph{will} \emph{not} equidistribute since the first coordinate is
zero modulo one. On the other-hand, Theorem \ref{thm:all omega} implies
that the points $\left\{ \pi\left(\rho_{n}\gamma_{2}(k/n)\right)\right\} _{k=1}^{n}$
will equidistribute.
\end{example*}

\subsection{Counter examples}

Our goal in the following is to discuss the possibility of failure
of equidistribution when the conditions on $\left\{ \rho_{n}\right\} _{n=1}^{\infty}$
given in Theorem \ref{thm:all omega} are not met.

\subsubsection{The case of $\protect\RND$ curves of finite order}

We now consider dilations of a single curve
\[
\gamma_{n}(t)=\rho_{n}\gamma(t),
\]
where $\gamma:\left[0,1\right]\to\R^{d}$ is analytic. The content
of Theorem \ref{lem:failure for rnd of order k} below is to show
that for a $\RND$ curve of order $\kappa<\infty$, the condition
$\rho_{n}=o(n^{\kappa})$ of Theorem \ref{thm:all omega} is rather
sharp.
\begin{thm}
\label{lem:failure for rnd of order k} Assume that $\gamma:\left[0,1\right]\to\R^{d}$
is a $\RND$ analytic curve of order $\kappa<\infty$. Then there
exists a sequence $\left\{ \rho_{n}\right\} _{n=1}^{\infty}$ which
satisfies $n^{\kappa}\leq\rho_{n}\leq n^{\left(\kappa+1\right)^{2}},\ \forall n\in\N$,
so that $\left\{ \mu_{n}\right\} _{n=1}^{\infty}$ given by \emph{(\ref{eq:measures of interest})}
for the curves $\gamma_{n}=\rho_{n}\gamma$ will not equidistribute.
\end{thm}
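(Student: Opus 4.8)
The plan is to build the sequence $\{\rho_n\}$ explicitly by a "crowding" construction along a sparse subsequence of indices, exploiting the fact that a $\RND$ curve $\gamma$ of finite order $\kappa$ has some integer vector $\mathbf h_0\in\Z^d\setminus\{\mathbf 0\}$ for which $\langle\mathbf h_0,\gamma^{(\kappa+1)}(t)\rangle\equiv 0$; equivalently $f(t)\df\langle\mathbf h_0,\gamma(t)\rangle$ is a polynomial in $t$ of degree exactly $\kappa$ (it cannot have degree $<\kappa$ by the order-$\kappa$ hypothesis, and it cannot have degree $>\kappa$ since the $(\kappa+1)$-st derivative vanishes; analyticity is what forces a genuine polynomial rather than just a curve whose high derivative vanishes pointwise at isolated points). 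Write $f(t)=a_\kappa t^\kappa+\dots+a_0$ with $a_\kappa\neq 0$. To defeat equidistribution of $\mu_n$ it suffices, by Weyl's criterion, to find $\mathbf h_0$ and a subsequence $n_j\to\infty$ along which $\widehat{\mu_{n_j}}(\mathbf h_0)=\frac1{n_j}\sum_{k=1}^{n_j}e\big(\rho_{n_j} f(k/n_j)\big)$ does not tend to $0$; in fact we will arrange $\rho_{n_j}f(k/n_j)\in\Z$ for every $k$, so this Fourier coefficient equals $1$.

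The key arithmetic step is this: for $n$ fixed, $\rho f(k/n)=\rho\sum_{i=0}^\kappa a_i k^i/n^i$; if $a_i\in\Q$ (the easy model case) then choosing $\rho=\rho(n)$ to be an appropriate integer multiple of $n^\kappa$ clears all denominators simultaneously and makes $\rho f(k/n)\in\Z$ for all $k$, while $\rho$ stays in the window $[n^\kappa, n^{(\kappa+1)^2}]$ once $n$ is large — here the exponent $(\kappa+1)^2$ gives room to absorb the bounded extra factor coming from the common denominator of the $a_i$ and from $n$-independent constants. In the general case the $a_i$ are real, so exact integrality is impossible; instead I would use simultaneous Diophantine approximation (Dirichlet's theorem / the pigeonhole principle) to pick, for infinitely many $n$, an integer $\rho_n\in[n^\kappa,n^{(\kappa+1)^2}]$ with $\|\rho_n a_i n^{-i}\|<\varepsilon_n$ for all $0\le i\le\kappa$, where $\varepsilon_n\to 0$ fast enough that $\sum_{k=1}^n |\text{fractional part of }\rho_n f(k/n)|$ is still $o(n)$ (note $k/n\le 1$, so each term is $\le \sum_i \|\rho_n a_i n^{-i}\|\le (\kappa+1)\varepsilon_n$). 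Then $\widehat{\mu_{n}}(\mathbf h_0)=\frac1n\sum_k e(\rho_n f(k/n))$ is within $O((\kappa+1)\varepsilon_n)$ of $1$, hence bounded away from $0$ along this subsequence. For the indices $n$ not in the chosen subsequence one simply sets $\rho_n=n^\kappa$ (or anything in the window), which is harmless since Weyl's criterion only needs one divergent subsequence.

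The step I expect to be the main obstacle is the simultaneous Diophantine approximation with the size constraint: we must place $\rho_n$ in the prescribed polynomial window $[n^\kappa,n^{(\kappa+1)^2}]$ while controlling $\kappa+1$ residues $\|\rho_n a_i n^{-i}\|$ down to $o(1/1)$ — this is a pigeonhole argument on $\rho\mapsto(\rho a_0,\rho a_1 n^{-1},\dots,\rho a_\kappa n^{-\kappa})\in\T^{\kappa+1}$ over $\rho$ ranging in an interval of length $\approx n^{(\kappa+1)^2}-n^\kappa$, which contains far more than $\varepsilon_n^{-(\kappa+1)}$ integers once $\varepsilon_n$ decays polynomially in $n$, so the box principle supplies a good $\rho_n$; the bookkeeping to check that the window's right endpoint exponent $(\kappa+1)^2$ really is large enough to simultaneously (a) beat the $\varepsilon_n^{-(\kappa+1)}$ count and (b) stay above $n^\kappa$ is where care is needed, but it is elementary. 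A secondary point to handle carefully is extracting, from "$\gamma$ is analytic and $\RND$ of order exactly $\kappa$", the clean statement that $\langle\mathbf h_0,\gamma(t)\rangle$ is a polynomial of degree exactly $\kappa$ for a suitable $\mathbf h_0$ — this uses that a real-analytic function on $[0,1]$ whose $(\kappa+1)$-st derivative is identically zero is a polynomial of degree $\le\kappa$, together with the definition of the order to pin the degree down to exactly $\kappa$.
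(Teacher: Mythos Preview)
Your approach matches the paper's: from the order-$\kappa$ hypothesis, $\langle\mathbf h_0,\gamma(t)\rangle$ is a polynomial $a_\kappa t^\kappa+\dots+a_0$, and Dirichlet's simultaneous approximation theorem supplies $\rho_n$ making the Weyl sum at $\mathbf h_0$ stay away from $0$. Two details in your execution need repair, however.

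First, the error estimate is off by a factor of $n^\kappa$. Writing $\rho_n f(k/n)=\sum_i(\rho_n a_i n^{-i})\,k^i$ and $\rho_n a_i n^{-i}=p_i+\delta_i$ with $|\delta_i|<\varepsilon_n$, the non-integer part is $\sum_i\delta_i k^i$, bounded by $(\kappa+1)\varepsilon_n n^\kappa$, not $(\kappa+1)\varepsilon_n$; it is the integer $k^i$, not $(k/n)^i$, that multiplies $\delta_i$, so the remark ``$k/n\le1$'' does not help. You therefore need $\varepsilon_n=o(n^{-\kappa})$, not merely $o(1)$.

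Second, your pigeonhole does not place $\rho_n$ in $[n^\kappa,n^{(\kappa+1)^2}]$: the box principle applied to integers $\rho$ ranging over that interval only produces a \emph{difference} $q=\rho_2-\rho_1$ with small residues, and $q$ may be as small as $1$. You flag the lower bound as the main obstacle but the argument sketched does not handle it. Both issues are resolved simultaneously by the paper's device: apply Dirichlet to the $n$-\emph{independent} $\kappa$-tuple $(a_1,\dots,a_\kappa)$ (omitting $a_0$) with $M=n^{\kappa^2+1}$, obtaining $\tilde\rho_n\in\{1,\dots,M\}$ with $|\tilde\rho_n a_i-p_i|\le M^{-1/\kappa}=n^{-(\kappa+1/\kappa)}$, and set $\rho_n=n^\kappa\tilde\rho_n$. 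This automatically gives $\rho_n\in[n^\kappa,n^{\kappa^2+\kappa+1}]\subset[n^\kappa,n^{(\kappa+1)^2}]$, turns the $a_0$-term into a harmless common phase $e(n^\kappa\tilde\rho_n a_0)$, and yields term-by-term error $n^{\kappa-i}j^{i}\,|\tilde\rho_n a_i-p_i|\le n^\kappa\cdot n^{-(\kappa+1/\kappa)}=n^{-1/\kappa}\to0$ uniformly in $j$, so the normalized Weyl sum has modulus tending to $1$ for \emph{every} $n$ --- no subsequence is needed.
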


\begin{example*}
There are $\RND$ analytic curves of order $\kappa$ such that $\left\{ \mu_{n}\right\} _{n=1}^{\infty}$
will not equidistribute for $\rho_{n}=n^{\kappa}$. Indeed, consider
\[
\gamma(t)=\left(t^{\kappa},t^{\kappa+1}\right),
\]
then the first coordinate of $n^{\kappa}\gamma(\frac{j}{n})$ is zero
modulo one.
\end{example*}

\subsubsection{The case of $\protect\RND$ curves of order $\infty$}

If we consider an arbitrary family of curves $\phi:\left[0,1\right]^{m}\times\left[0,1\right]\to\R^{d}$,
then the following is true.
\begin{thm}
\label{lem:failure for RND} Let $\phi:[0,1]^{m}\times[0,1]\to\R^{d}$
be a family of curves and fix $\left\{ \mathbf{x}_{n}\right\} _{n=1}^{\infty}\subseteq\left[0,1\right]^{m}.$
Then there exists a sequence $\left\{ \rho_{n}\right\} _{n=1}^{\infty}$
diverging to $\infty$ with 
\begin{equation}
\rho_{n}\ll\left(\left(3.5\right)^{d}\right)^{n},\ \forall n\in\N,\label{eq:upperbound for rho_n for rnd}
\end{equation}
 for which $\left\{ \mu_{n}\right\} _{n=1}^{\infty}$ given by\emph{
(\ref{eq:measures of interest})} for the curves $\gamma_{n}=\rho_{n}\phi(\mathbf{x}_{n},\cdot)$
will not equidistribute.
\end{thm}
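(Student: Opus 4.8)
The plan is to build $\rho_n$ recursively so that at every step $n$ the sampled points $\{\rho_n\phi(\mathbf{x}_n,k/n)\}_{k=1}^n$ all lie in a small neighborhood of a fixed proper subtorus (or even a single point), which will obstruct equidistribution by exhibiting a test function whose $\mu_n$-integrals do not converge to the Haar integral. The key observation is the following pigeonhole-type statement: for any fixed finite collection of vectors $v_1,\dots,v_n\in\R^d$ and any $\varepsilon>0$, one can find an integer $q$ in a controlled range such that all the scaled vectors $q v_1,\dots,q v_n$ are simultaneously within $\varepsilon$ of $\Z^d$ in the torus. Quantitatively, if we tile $\T^d$ into $N^d$ cubes of side $1/N$, then among the $N^d+1$ torus points $\pi(0), \pi(v), \pi(2v),\dots,\pi(N^d v)$ two must share a cube (Dirichlet), giving an integer $q_1\le N^d$ with $\|\pi(q_1 v)\|_{\T^d}\le \sqrt d/N$; iterating this simultaneously for the $n$ vectors $v_k=\phi(\mathbf{x}_n,k/n)$ one gets $q\le N^{dn}$ with $\|\pi(q v_k)\|_{\T^d}\le \sqrt d\, n/N$ or similar. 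Choosing $N$ of fixed size (roughly $3.5$, adjusting constants) and exploiting that we only need $q$ large rather than small — so we may take $q$ to be the full product over the steps — yields the bound $\rho_n\ll ((3.5)^d)^n$.

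Concretely, I would proceed as follows. First, set up the Dirichlet/pigeonhole lemma in the form: for vectors $v_1,\dots,v_n\in[-1,1]^d$ (after rescaling $\phi$ is bounded, so this holds up to a harmless constant) and any integer $M\ge 2$, there is an integer $1\le q\le M^{dn}$ with $\max_k \mathrm{dist}(q v_k,\Z^d)\le \sqrt d/(M-1)$ — proved by subdividing the torus $\T^{dn}$ (coordinates indexed by the $n$ vectors) into $M^{dn}$ boxes and applying pigeonhole to $\pi(0),\pi(\tilde v),\dots,\pi(M^{dn}\tilde v)$ where $\tilde v=(v_1,\dots,v_n)\in\T^{dn}$. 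Second, run this recursively: having chosen $\rho_1,\dots,\rho_{n-1}$, apply the lemma at level $n$ with $M$ a fixed integer (chosen so that $M^{d}\le (3.5)^d$ and $\sqrt d/(M-1)$ is small, say $\le 1/(10\sqrt d)$ — one checks the arithmetic works with a suitable fixed $M$, absorbing slack into the $\ll$) to get $\rho_n$ a multiple of $\rho_{n-1}$ (to keep $\rho_n$ monotone and divergent) with $\rho_n\le M^{dn}\cdot(\text{previous})$, hence telescoping to $\rho_n\ll (M^d)^n\le ((3.5)^d)^n$, and with all points $\pi(\rho_n\phi(\mathbf{x}_n,k/n))$ within $1/10$ of the origin of $\T^d$. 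Third, conclude non-equidistribution: pick $f\in C(\T^d)$ supported near $0$ with $\int_{\T^d}f\,d\mathbf{x}<1=f(0)$ and $f\ge 0$; then $\mu_n(f)$ is bounded below by $\min_{\|x\|\le 1/10} f(x)$ which can be arranged $>\int f$, so $\mu_n(f)\not\to\int_{\T^d}f\,d\mathbf{x}$.

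The main subtlety — and the step to be careful with — is the bookkeeping of constants so that the final bound is exactly of the form $((3.5)^d)^n$ rather than $(C^d)^n$ for an uncontrolled $C$: one must balance the box-count $M^{dn}$ (wanting $M$ small) against the approximation quality $\sqrt d/(M-1)$ (wanting $M$ large), and also handle the rescaling of $\phi$ from its box $[0,1]^m\times[0,1]$ into a set of diameter compatible with unit-side boxes. Since the theorem only asks for \emph{some} fixed neighborhood obstruction (not $\varepsilon\to 0$), a single fixed $M$ suffices, and the slack in $\ll$ absorbs the rescaling constant and the requirement that $\rho_n$ be an honest increasing sequence diverging to $\infty$ (which we get for free by taking $\rho_n$ to be a multiple of $\rho_{n-1}$ that is genuinely larger, e.g. by also insisting $q\ge 2$ at each step, which Dirichlet allows). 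A secondary point worth stating cleanly is that the recursion is purely existential — there is no regularity or non-degeneracy hypothesis on $\phi$ at all — which is exactly why this counterexample coexists with Theorem \ref{thm:all omega}: the bad $\rho_n$ here grows genuinely exponentially, outside the polynomial regime.
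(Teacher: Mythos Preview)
Your core idea --- apply Dirichlet's simultaneous approximation theorem to the concatenated vector $(\phi(\mathbf{x}_n,1/n),\ldots,\phi(\mathbf{x}_n,1))\in\R^{dn}$ to find an integer $q\le M^{dn}$ pushing all $n$ sampled points near $\Z^d$, then observe that the resulting measures sit in a fixed proper subset of $\T^d$ --- is exactly the paper's approach. The gap is in your recursion step. If you set $\rho_n=q_n\rho_{n-1}$ with $q_n\le M^{dn}$, then telescoping gives
\[
\rho_n\le M^{d(1+2+\cdots+n)}=\bigl(M^d\bigr)^{n(n+1)/2},
\]
which is of order $(M^d)^{n^2/2}$, not $(M^d)^n$ as you claim; this misses the target bound $\rho_n\ll((3.5)^d)^n$. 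The recursion is also not doing the job you assigned it: its sole purpose is to force $\rho_n\to\infty$, but Dirichlet only guarantees $q\ge 1$, not $q\ge 2$ (if the vector is already close to the lattice, $q=1$ may be the only output in range), so ``insisting $q\ge 2$ at each step, which Dirichlet allows'' is not justified.

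The paper avoids both issues by applying Dirichlet \emph{independently} at each $n$ --- not to $\phi(\mathbf{x}_n,k/n)$ but to $(\log n)\,\phi(\mathbf{x}_n,k/n)$ --- with $M=3^{dn}$. This yields $\tilde\rho_n\in\{1,\ldots,3^{dn}\}$ with all points $\tilde\rho_n(\log n)\phi(\mathbf{x}_n,k/n)$ within $1/3$ of $\Z^d$ in sup norm; setting $\rho_n=\tilde\rho_n\log n$ then gives $\rho_n\ge\log n\to\infty$ and $\rho_n\le 3^{dn}\log n\ll(3.5)^{dn}$ directly, with no recursion and no telescoping. Your argument is easily repaired along these lines.
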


If $\phi:\left[0,1\right]^{m}\times\left[0,1\right]\to\R^{d}$ is
a family of $\RND$ analytic curves of order $\infty$, then by Theorem
\ref{thm:all omega} it is necessary that the sequence $\left\{ \rho_{n}\right\} _{n=1}^{\infty}$
given in Lemma \ref{lem:failure for RND} satisfies for all $\kappa\in\N$
that
\[
\lim_{n\to\infty}\frac{n^{\kappa}}{\rho_{n}}=0.
\]

Namely, for $\RND$ curves of order $\infty$, the ``bad dilations''
necessarily exceed polynomial growth, yet there are ``bad dilations''
which can be bounded exponentially.

\subsection{Equidistribution beyond polynomial growth}

To illustrate the content of our last main result (Theorem \ref{thm:almost all omega})
we now discuss a particular example.

Consider for $n\in\N$ the rotated $n$'th roots of unity on the unit
circle
\[
r_{k,\omega}^{(n)}\df\left(\sin\left(\frac{2\pi k}{n}+\omega\right),\cos\left(\frac{2\pi k}{n}+\omega\right)\right),\ k=1,..,n,\ \omega\in\R/\Z.
\]
We denote by $\left\{ \rho_{n}\right\} _{n=1}^{\infty}\subseteq\R$
a sequence diverging to $\infty$ and we consider the sequence of
measures 
\[
\mu_{n,\omega}\df\frac{1}{n}\sum_{k=1}^{n}\delta_{\pi(\rho_{n}r_{k,\omega}^{(n)})}
\]
 for $n\in\N,$ $\omega\in\R/\Z$.

Consider the case that $\rho_{n}$ is bounded polynomially in $n$.
Since for all $\omega\in\R/\Z$ the curve
\[
\gamma_{\omega}(t)=\left(\sin\left(2\pi t+\omega\right),\cos\left(2\pi t+\omega\right)\right),\ t\in[0,1]
\]
is $\RND$ analytic curve of order $\infty$, we obtain by Theorem
\ref{thm:all omega} that for all\emph{ }$\omega\in\R/\Z$ the sequence
of measures $\left\{ \mu_{n,\omega}\right\} _{n=1}^{\infty}$ equidistributes
as $n\to\infty$.

Theorem \ref{thm:almost all omega} below sheds light on the case
that $\left\{ \rho_{n}\right\} _{n=1}^{\infty}$ diverges to infinity
at an arbitrary rate by stating that $\left\{ \mu_{n,\omega}\right\} _{n=1}^{\infty}$
equidistributes \emph{for almost all} $\omega\in\R/\Z$. Note that
Theorem \ref{lem:failure for RND} states that for any fixed $\omega_{0}\in\R/\Z$
there exists a sequence $\left\{ \rho_{n}\right\} _{n=1}^{\infty}$
diverging to $\infty$ such that $\left\{ \mu_{n,\omega_{0}}\right\} _{n=1}^{\infty}$
does not equidistribute.

\subsubsection{Almost all rotations}

We say that a family of closed curves $\ef:\left[0,1\right]^{m}\times\R/\Z\to\R^{d}$
is $\RND$ analytic of order $\infty$, if the lift of $\ef$ defined
by 
\[
\phi(\mathbf{x},t)=\ef(\mathbf{x},t+\Z),\ \left(\mathbf{x},t\right)\in\left[0,1\right]^{m}\times\left[0,1\right],
\]
is $\RND$ analytic family of curves of order $\infty$. Let $\left\{ \mathbf{x}_{n}\right\} _{n=1}^{\infty}\subseteq\left[0,1\right]^{m}$,
$\left\{ \rho_{n}\right\} _{n=1}^{\infty}\subseteq\R$ and $\omega\in\left[0,1\right]$.
We denote 
\begin{equation}
\gamma_{\omega,n}(t)\overset{\text{def}}{=}\rho_{n}\ef(\mathbf{x}_{n},t+\omega+\Z),\ t\in\left[0,1\right],\label{eq:rotated curves}
\end{equation}
and for $\gamma_{\omega,n}$ we define the measure $\mu_{\omega,n}$
as in (\ref{eq:measures of interest}).
\begin{thm}
\label{thm:almost all omega} Assume that $\ef:\left[0,1\right]^{m}\times\R/\Z\to\R^{d}$
is an analytic family of $\RND$ curves of order $\infty$. Fix $\left\{ \mathbf{x}_{n}\right\} _{n=1}^{\infty}\subseteq\left[0,1\right]^{m}$
and $\left\{ \rho_{n}\right\} _{n=1}^{\infty}\subseteq\R$ such that
$\rho_{n}\to\infty$. Then, $\left\{ \mu_{\omega,n}\right\} _{n=1}^{\infty}$
equidistributes, for almost all $\omega\in[0,1]$.
\end{thm}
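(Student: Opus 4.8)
The plan is to use Fourier analysis combined with a second-moment (Borel--Cantelli) argument in the variable $\omega$. For $f\in C(\T^d)$ it suffices, by density, to prove equidistribution against characters $f=e_{\mathbf{h}}$ with $\mathbf{h}\in\Z^d\smallsetminus\{\mathbf{0}\}$. Thus we must show that for a.e. $\omega$,
\[
S_n(\omega)\df\frac1n\sum_{k=1}^n e\!\left(\langle\mathbf{h},\gamma_{\omega,n}(k/n)\rangle\right)=\frac1n\sum_{k=1}^n e\!\left(\rho_n\langle\mathbf{h},\ef(\mathbf{x}_n,k/n+\omega)\rangle\right)\longrightarrow 0.
\]
Because $\ef$ is periodic in its last argument, shifting by $\omega$ and by $k/n$ both act on the same circle coordinate, so we may regard $\omega\mapsto S_n(\omega)$ as a function on $\R/\Z$ and try to bound $\int_0^1|S_n(\omega)|^2\,d\omega$. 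Expanding the square gives a double sum over $k,l$ of $\int_0^1 e\!\left(\rho_n[g(k/n+\omega)-g(l/n+\omega)]\right)d\omega$ where $g(t)=\langle\mathbf{h},\ef(\mathbf{x}_n,t)\rangle$; the diagonal $k=l$ contributes $1/n$, and the heart of the matter is to show the off-diagonal oscillatory integrals are small on average.

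The main tool for the off-diagonal terms should be the theory of o-minimal structures, exactly as advertised in the abstract: the function $(\omega,\rho)\mapsto \int_0^1 e(\rho[g(t+\omega+a)-g(t+\omega)])\,dt$, or more precisely the set of $\omega$ on which a van der Corput–type derivative test fails, is definable in a suitable o-minimal structure (the real analytic functions restricted to a compact box generate an o-minimal structure by work of van den Dries--Miller), uniformly in the parameters. O-minimality then yields a \emph{uniform} bound on the number of connected components of the relevant bad sets, hence a uniform modulus of control on the oscillatory integrals that is independent of $\rho_n$ and $\mathbf{x}_n$; this is precisely what lets $\rho_n$ grow arbitrarily fast. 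Concretely, I expect to show $\int_0^1|S_n(\omega)|^2\,d\omega\le C/n^{\delta}$ for some $\delta>0$ and a constant $C$ depending only on $\ef$ and $\mathbf{h}$ (using RND of order $\infty$ to guarantee $g^{(j)}\not\equiv0$ for the relevant $j$, and o-minimality to count zeros/small-value sets uniformly). Summing along a polynomially sparse subsequence $n_j=\lceil j^{2/\delta}\rceil$ and invoking Borel--Cantelli gives $S_{n_j}(\omega)\to0$ a.e.; a standard interpolation between consecutive $n_j$, using that $S_n(\omega)$ cannot change too much as $n$ varies between $n_j$ and $n_{j+1}$ (again a smoothness estimate uniform in $\omega$), upgrades this to $S_n(\omega)\to0$ along all $n$, for a.e. $\omega$. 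Finally, a diagonalization over the countably many $\mathbf{h}$ produces a single full-measure set of good $\omega$.

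The step I expect to be the genuine obstacle is making the o-minimal input quantitative enough and uniform enough to beat the arbitrary growth of $\rho_n$: one needs not merely that the bad $\omega$-set is definable, but an explicit polynomial-in-$n$ decay of the measure of $\{\omega:|S_n(\omega)|>\varepsilon\}$ with constants free of $\rho_n$. This requires combining the uniform finiteness theorem for definable families with a careful van der Corput stationary-phase estimate whose error term is controlled purely by the order of vanishing of $g$ and the component count — and checking that RND of order $\infty$, together with analyticity, indeed forbids pathological flatness on a set of $\omega$ of positive measure. The interpolation between subsequence terms is a secondary technical point but should follow from the mean value theorem applied to $\omega\mapsto S_n(\omega)$ and the elementary comparison of Riemann sums for $n$ and $n'$ close together.
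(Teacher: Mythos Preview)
Your overall plan (Weyl's criterion, moment estimate in $\omega$, Borel--Cantelli, o-minimality for uniform control of oscillatory integrals) matches the paper's strategy, but there is a genuine gap in the passage from a subsequence to the full sequence.

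The ``standard interpolation between consecutive $n_j$'' you invoke does not exist here. The quantity $S_n(\omega)$ depends on the \emph{arbitrary} data $\rho_n$ and $\mathbf{x}_n$, and there is no relation whatsoever between $(\rho_n,\mathbf{x}_n)$ and $(\rho_{n+1},\mathbf{x}_{n+1})$; moreover the sample points $k/n$ themselves change with $n$. So $S_{n_j}(\omega)$ and $S_{n_j+1}(\omega)$ can be completely unrelated, and no smoothness or Riemann-sum comparison bridges them. The familiar interpolation works for averages like $\tfrac{1}{N}\sum_{n\le N}a_n$, where one extra term costs $O(1/N)$; the present situation is not of that form.

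This is exactly why the paper does two things you do not. First, it uses the \emph{fourth} moment rather than the second: the diagonal in the second-moment expansion already contributes $1/n$, so $\int_0^1|S_n(\omega)|^2\,d\omega$ can at best be $\asymp 1/n$, which is not summable. With the fourth moment, the ``singular'' set of $(k_1,k_2,k_3,k_4)\in[n]^4$ on which the phase degenerates is shown (via a Fourier-coefficient argument, Lemma~\ref{lem:singular set}) to lie near a finite union of $2$-dimensional parallelograms, hence has $O(n^2)$ points; combined with the o-minimal oscillatory-integral estimate this yields $\int_0^1|S_n|^4\,d\omega\ll n^{-2}$, which \emph{is} summable and gives $S_n(\omega)\to 0$ along the full sequence directly, with no interpolation needed. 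Second, the fourth-moment bound is only proved for $\rho_n\ge n^{\tau}$; the complementary regime $\rho_n\le n^{\tau}$ is handled not by a moment argument at all but by invoking Theorem~\ref{thm:all omega}, which for $\RND$ order $\infty$ gives $S_n(\omega)\to 0$ for \emph{every} $\omega$ when $\rho_n$ grows at most polynomially. Your proposal lacks both of these ingredients, and without them the argument does not close.
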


\subsection{Proof ideas and organization of the paper}

To prove equidistribution we estimate exponential sums. In these estimates,
it is crucial to control the sub-level sets of the amplitude function
(the one that appears as the argument of the exponential). This control
is reflected both in bounding the number of connected components and
the measure of the sub-level sets as illustrated in Proposition \ref{prop:uniformity on the lower bound and complement}.
This connects our discussion to the theory of o-minimal structures,
which allows to prove such estimates in impressive generality by an
argument which we find to be elegant (see Section \ref{sec:Prelimeneries}).
As far as we know, this work is novel in its use of o-minimality to
control exponential sums, yet we note that the use of o-minimality
to control exponential integrals already appears in \cite{Peterzil_strach_o-min-flow}.

The structure of the paper is as follows:
\begin{itemize}
\item In Section \ref{sec:Prelimeneries} we establish the mentioned properties
of sub-level sets, and in Appendix \ref{sec:Basic-notions-in o-minim}
we discuss the facts which we need from the theory of o-minimal structures.
\item In Sections \ref{sec:Proof-of-theorem all omega} and \ref{sec:Proof-of-a.a. omega}
we prove Theorems \ref{thm:all omega} and \ref{thm:almost all omega}
respectively.
\item In Section \ref{sec:On-the-growth} we discuss counter examples for
equidistribution (Theorems \ref{lem:failure for rnd of order k} and
\ref{lem:failure for RND}).
\end{itemize}

\subsubsection*{Notational conventions.}
\begin{itemize}
\item For $n\in\N,$ we set $[n]\overset{\text{def}}{=}\{1,2,..,n\}.$
\item $e(x)\overset{\text{def}}{=}e^{2\pi ix}$.
\item Vectors will be denoted by bold letters, namely $\mathbf{x}$ will
stand for a n-tuple, and by small letters, as $x_{j}$, we denote
the coordinates of $\mathbf{x}$.
\item We denote for $\mathbf{x},\mathbf{y}\in\R^{d}$ by $\left\langle \mathbf{x},\mathbf{y}\right\rangle $
the usual Euclidean inner product, and by $\left\Vert \mathbf{x}\right\Vert $
the Euclidean norm.
\item For a subset $S\subseteq\R^{d}$ we denote by $\left|S\right|$ its
Lebesgue volume and for any set $S$ we denote by $\#S$ the number
of elements in $S$.
\item We will use the notations $\ll,$ $o\left(\cdot\right),$ $O(\cdot),\ $
$\asymp$, as in the book \cite{Anaytic_num_theo} (see introduction
in \cite{Anaytic_num_theo}).
\end{itemize}

\subsubsection*{Acknowledgements}

The origins of this project is in a question raised by Uri Shapira
during exciting discussions with Erez Nesharim, Ren{\'e} R{\"u}hr, Shucheng
Yu and Cheng Zheng, where many interesting ideas brought up and I
thank them for that. I am grateful to Uri Shapira for his ideas, support,
encouragement and remarks throughout my work on this project. I am
very thankful for many discussions with Ren{\'e} R{\"u}hr and Shucheng Yu
which influenced this work. I am indebted to Kobi Peterzil for a wonderful
seminar talk in our department from which I first noticed the connection
between this problem and the theory of o-minimal structures, and for
many discussions and advice that followed. I am in particular very
thankful to Gal Binyamini for showing me the proof of Proposition
\ref{prop:uniformity on the lower bound and complement} and for a
very interesting discussion. I am thankful for fruitful discussions
with Ze{\'e}v Rudnick and Sergei Yakovenko. I am grateful to the referee,
Daniel Goldberg and Yakov Karasik for corrections and suggestions
which significantly improved the manuscript.

\section{\label{sec:Prelimeneries}Preliminaries}

Let $N\in\N$ and assume that $F:\left[0,1\right]^{N}\times\left[0,1\right]\to\R$
is a non-constant analytic function. Let 
\begin{equation}
\Sigma\df\left\{ \mathbf{x}\in\left[0,1\right]^{N}\mid F(\mathbf{x},t)=0,\ \forall t\in[0,1]\right\} ,\label{eq:definition of Sigma}
\end{equation}
and note that $\Sigma$ is a proper closed subset of $\left[0,1\right]^{N}$.
We denote by $\text{dist}(\mathbf{x},\Sigma)$ the usual Euclidean
distance between $\mathbf{x}\in\R^{N}$ and $\Sigma$. Then, for all
$\epsilon>0$ small enough, the subset 
\begin{equation}
\Sigma_{\epsilon}\df\left\{ \mathbf{x}\in\left[0,1\right]^{N}\mid\text{dist}(\mathbf{x},\Sigma)\geq\epsilon\right\} ,\label{eq:definition of Sigma eps}
\end{equation}
is non-empty, and the function $F(\mathbf{x},\cdot)$ is non-zero
for all $\mathbf{x}\in\Sigma_{\epsilon}$. When $\Sigma=\emptyset$
we take the convention that $\Sigma_{\epsilon}=\left[0,1\right]^{N}$
for all $\epsilon>0$.

We define for $\delta>0$ and $\mathbf{x}\in\left[0,1\right]^{N}$
the set

\[
\mathcal{F}_{\mathbf{x},\delta}\overset{\text{def}}{=}\left\{ t\in\left[0,1\right]\mid\left|F(\mathbf{x},t)\right|\geq\delta\right\} .
\]

\begin{prop}
\label{prop:uniformity on the lower bound and complement}Assume that
$F:\left[0,1\right]^{N}\times\left[0,1\right]\to\R$ is a non-constant
analytic function. Then, there exists $\alpha>0$ such that for all
$\epsilon\in(0,1)$ and $\mathbf{x}\in\Sigma_{\epsilon}$ it holds
that $\mathcal{F}_{\mathbf{x},\epsilon^{\alpha}}$ is a union of $O(1)$
closed intervals, say 
\begin{equation}
\mathcal{F}{}_{\mathbf{x},\epsilon^{\alpha}}=I_{1}\cup..\cup I_{m},\ m=O(1),\label{eq:f(x,delta) finite union of intervals}
\end{equation}
and 
\begin{equation}
1-\left|\mathcal{F}_{\mathbf{x},\epsilon^{\alpha}}\right|\ll\epsilon.\label{eq:bound on area of F_x_eps^alph}
\end{equation}
\end{prop}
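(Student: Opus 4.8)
The statement is a uniform quantitative version of the obvious fact that a non-constant analytic function in $t$ cannot stay small on a large set. The plan is to set everything up inside an o-minimal structure (namely $\R_{\mathrm{an}}$, the real field with restricted analytic functions, which is o-minimal by the theorem of Denef--van den Dries) so that the two claimed bounds become instances of uniform finiteness and the curve selection / Łojasiewicz-type inequalities available in that setting. The parameter $\mathbf{x}$ ranging over $[0,1]^N$ and the parameter $\epsilon$ are both to be treated as definable parameters, and the sets $\mathcal F_{\mathbf{x},\delta}$, $\Sigma$, $\Sigma_\epsilon$ are all definable families, so all the relevant quantities (number of connected components, measures) are definable functions of $(\mathbf x,\epsilon)$ and one can invoke cell decomposition and the monotonicity theorem.

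First I would handle the bound on the number of intervals. For fixed $\mathbf x$, $\mathcal F_{\mathbf x,\delta}=\{t:|F(\mathbf x,t)|\ge\delta\}$ is a finite union of closed intervals because $F(\mathbf x,\cdot)$ is a non-constant analytic function on $[0,1]$, hence has finitely many zeros, and more to the point the set $\{(\mathbf x,\delta,t): |F(\mathbf x,t)|\ge\delta\}$ is a definable subset of $[0,1]^N\times\R_{>0}\times[0,1]$; by the uniform bound on the number of connected components of fibers of a definable family (a standard consequence of cell decomposition), there is a uniform constant $m_0$ so that for every $(\mathbf x,\delta)$ the fiber has at most $m_0$ connected components. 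This already gives \eqref{eq:f(x,delta) finite union of intervals} with $m=O(1)$, for \emph{any} choice of $\delta$, in particular $\delta=\epsilon^\alpha$; closedness of the intervals is immediate since $|F|$ is continuous.

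The substantive point is \eqref{eq:bound on area of F_x_eps^alph}, i.e. that the complement $\{t:|F(\mathbf x,t)|<\epsilon^\alpha\}$ has measure $\ll\epsilon$ uniformly over $\mathbf x\in\Sigma_\epsilon$. Here the idea is a Łojasiewicz inequality with parameters. Consider the definable function $g(\mathbf x,\delta)\df |\{t\in[0,1]:|F(\mathbf x,t)|<\delta\}|$; it is a definable function of $(\mathbf x,\delta)\in[0,1]^N\times(0,1]$, non-decreasing in $\delta$, and $g(\mathbf x,0^+)=0$ whenever $\mathbf x\notin\Sigma$, in particular for $\mathbf x\in\Sigma_\epsilon$. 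I want to compare $g(\mathbf x,\delta)$ with $\mathrm{dist}(\mathbf x,\Sigma)$. The clean way: look at the definable set $S=\{(\mathbf x,\delta,s)\in([0,1]^N\setminus\Sigma)\times(0,1]\times(0,1]: g(\mathbf x,\delta)\le s\}$ and, using the Łojasiewicz inequality for definable functions (valid in any polynomially bounded o-minimal structure, and $\R_{\mathrm{an}}$ is polynomially bounded), extract a power relation of the form $g(\mathbf x,\delta)\le C\,\delta^{c}\,\mathrm{dist}(\mathbf x,\Sigma)^{-c'}$ on a neighbourhood, or more robustly: the function $(\mathbf x,\delta)\mapsto g(\mathbf x,\delta)/\mathrm{dist}(\mathbf x,\Sigma)$ is definable, and one shows it is bounded by $C\delta^{\beta}$ for some $\beta>0$ by applying the monotonicity theorem and the growth dichotomy (a definable one-variable function is eventually comparable to a power) after reducing to a one-parameter curve via the curve selection lemma whenever the putative bound fails. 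Concretely, if \eqref{eq:bound on area of F_x_eps^alph} failed for every $\alpha$, then for each $\alpha$ there would be sequences $\epsilon_k\to 0$ and $\mathbf x_k\in\Sigma_{\epsilon_k}$ with $g(\mathbf x_k,\epsilon_k^\alpha)/\epsilon_k\to\infty$; by curve selection applied to the definable set recording this failure one gets a definable curve along which $g$ decays slower than any power of $\epsilon$ relative to $\mathrm{dist}(\cdot,\Sigma)\ge\epsilon$, contradicting polynomial boundedness of $\R_{\mathrm{an}}$. Fixing one $\alpha$ large enough then yields the claim, and the same $\alpha$ (after possibly enlarging) serves for the interval-count statement trivially.

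**Main obstacle.** The technical heart is establishing the uniform Łojasiewicz-type estimate \eqref{eq:bound on area of F_x_eps^alph}: one must get a single exponent $\alpha$ that works simultaneously for all $\mathbf x\in\Sigma_\epsilon$ and all small $\epsilon$, and the natural danger is that the constant in a fiberwise Łojasiewicz inequality blows up as $\mathbf x\to\Sigma$. The resolution is precisely that $\mathrm{dist}(\mathbf x,\Sigma)$ itself is definable, so the \emph{two-variable} function $g$, its relation to $\mathrm{dist}(\mathbf x,\Sigma)$, and the bad-parameter set are all definable; combining the curve selection lemma with polynomial boundedness of $\R_{\mathrm{an}}$ turns "the constant might blow up" into "along a definable curve we would violate polynomial boundedness", which cannot happen. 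I would lay out the o-minimality prerequisites (o-minimality of $\R_{\mathrm{an}}$, cell decomposition, uniform bounds on connected components of fibers, the monotonicity theorem, polynomial boundedness, curve selection, and the definable Łojasiewicz inequality) in the appendix referenced by the paper, and then the proof above is short.
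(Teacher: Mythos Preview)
Your plan is sound and uses the same high-level ingredients as the paper (o-minimality of $\R_{an}$, uniform bounds on connected components of definable fibers for \eqref{eq:f(x,delta) finite union of intervals}, and polynomial boundedness for \eqref{eq:bound on area of F_x_eps^alph}), but the route to the measure bound is genuinely different. You work directly with the measure function $g(\mathbf x,\delta)=|\{t:|F(\mathbf x,t)|<\delta\}|$, treat it as definable, and argue by contradiction via curve selection and polynomial boundedness. The paper instead avoids invoking definability of Lebesgue measure altogether: it introduces the first-order definable set
\[
A=\bigl\{(\epsilon,\delta):\forall\mathbf x\in\Sigma_\epsilon,\ \forall\xi\in[0,1],\ (\xi-\tfrac{\epsilon}{2},\xi+\tfrac{\epsilon}{2})\cap\mathcal F_{\mathbf x,\delta}\neq\emptyset\bigr\},
\]
observes by pigeonhole (Lemma~\ref{lem:elements in A bound on length}) that $(\epsilon,\delta)\in A$ already forces $1-|\mathcal F_{\mathbf x,\delta}|\le(M+1)\epsilon$, shows by a compactness-plus-analyticity argument (Lemma~\ref{lem:Sigma_epsilon lemma}) that every small $\epsilon$ admits some $(\epsilon,\delta)\in A$, and then applies definable choice together with polynomial boundedness to obtain $\delta(\epsilon)\ge\epsilon^\alpha$. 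Your approach is more direct; the paper's buys a slightly lighter o-minimal toolkit (no integration/volume definability needed) at the price of the auxiliary pigeonhole lemma.

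The one place your sketch is thin is the passage from the \emph{pointwise} fact $g(\mathbf x,0^+)=0$ for $\mathbf x\notin\Sigma$ to the \emph{uniform} statement over $\mathbf x\in\Sigma_\epsilon$; this is exactly where analyticity of $F$ (and not just its definability) is used, and it is what the paper isolates as Lemma~\ref{lem:Sigma_epsilon lemma}. Your curve-selection contradiction can be made to work, but when unwound it reproduces that same step: along the definable curve one finds a sublevel interval of definite length persisting in the limit, forcing $F(\mathbf x_0,\cdot)$ to vanish on an interval and hence identically, contradicting $\mathbf x_0\in\Sigma_\epsilon$.
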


The proof of Proposition \ref{prop:uniformity on the lower bound and complement}
involves familiarity with the theory of o-minimal structures. We refer
the reader not familiar with o-minimal theory to Appendix \ref{sec:Basic-notions-in o-minim}
where we discuss all the required details needed for the proof below.

We denote by $\R_{an}$ the o-minimal structure expanding the real
field generated by the restricted analytic functions (see Section
\ref{subsec:The-o-minimal-structure R_an}).

By the \emph{uniform bounds on fibers} property of o-minimal structures,
there exists $M=M(F)>0$ such that $\mathcal{F}_{\mathbf{x},\delta}$
is a union of at most $M$ intervals (see Theorem \ref{thm:uniform bound on fibers}
and observe that $\mathcal{F}\df\left\{ (\mathbf{x},\delta,t)\in[0,1]^{N}\times\R_{>0}\times[0,1]\mid\left(F(\mathbf{x},t)\right)^{2}-\delta^{2}\geq0\right\} $
is definable in $\R_{an}$), which proves (\ref{eq:f(x,delta) finite union of intervals}).

The bound (\ref{eq:bound on area of F_x_eps^alph}) is more special,
namely, it is not necessarily true in an arbitrary o-minimal structure.
The essential ingredient we will use below in the proof of (\ref{eq:bound on area of F_x_eps^alph})
is that $\R_{an}$ is \emph{polynomially bounded} (see Definition
\ref{def:polynomially bounded}).

We consider
\begin{equation}
A\df\left\{ \left(\epsilon,\delta\right)\in(0,1]\mid\forall\mathbf{x}\in\Sigma_{\epsilon},\ \forall\xi\in\left[0,1\right]\text{ it holds that }\ (\xi-\frac{\epsilon}{2},\xi+\frac{\epsilon}{2})\cap\mathcal{F}_{\mathbf{x},\delta}\neq\emptyset\right\} .\label{eq:definition of A}
\end{equation}

\begin{lem}
\label{lem:elements in A bound on length}For all $\left(\epsilon,\delta\right)\in A$
and all $\mathbf{x}\in\Sigma_{\epsilon}$ it holds 
\[
1-\left|\mathcal{F}_{\mathbf{x},\delta}\right|\leq\epsilon(M+1),
\]
where $M$ is a uniform bound on the number of intervals comprising
$\mathcal{F}_{\mathbf{x},\delta}$.
\end{lem}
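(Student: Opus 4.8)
The plan is to translate a lower bound on $|\mathcal{F}_{\mathbf{x},\delta}|$ into an upper bound on the lengths of the complementary intervals (the \emph{gaps}) of $\mathcal{F}_{\mathbf{x},\delta}$ inside $[0,1]$, and to show that the defining condition of $A$ prevents any gap from being as long as $\epsilon$. Fix $(\epsilon,\delta)\in A$ and $\mathbf{x}\in\Sigma_\epsilon$. Since $F$ is analytic, hence continuous, $\mathcal{F}_{\mathbf{x},\delta}=\{t\in[0,1]\mid|F(\mathbf{x},t)|\ge\delta\}$ is closed in $[0,1]$; by the uniform bounds on fibers it is a union of at most $M=M(F)$ intervals, and after merging overlapping or abutting pieces and discarding the Lebesgue-null isolated points we may write $\mathcal{F}_{\mathbf{x},\delta}=I_1\cup\dots\cup I_m$ with $m\le M$ pairwise disjoint closed intervals, ordered from left to right. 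Its complement in $[0,1]$ is then a disjoint union of at most $m+1\le M+1$ intervals, namely a possibly-empty initial gap $[0,c)$, a possibly-empty final gap $(d,1]$, and the open gaps lying strictly between consecutive $I_i$.

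Next I would bound the length of a single gap $J$, arguing by contradiction. Suppose $|J|\ge\epsilon$. If $J=(a,b)$ is an interior gap, set $\xi=(a+b)/2\in[0,1]$; then $\xi-\epsilon/2\ge a$ and $\xi+\epsilon/2\le b$, precisely because $b-a\ge\epsilon$, so $(\xi-\epsilon/2,\xi+\epsilon/2)\subseteq J$. If $J=[0,c)$ with $c\ge\epsilon$, set $\xi=\epsilon/2$, so that $(\xi-\epsilon/2,\xi+\epsilon/2)=(0,\epsilon)\subseteq[0,c)=J$; symmetrically, if $J=(d,1]$ with $1-d\ge\epsilon$, set $\xi=1-\epsilon/2$. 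In all cases $(\xi-\epsilon/2,\xi+\epsilon/2)\cap[0,1]\subseteq J$, which is disjoint from $\mathcal{F}_{\mathbf{x},\delta}$; this contradicts the membership $(\epsilon,\delta)\in A$, whose definition applied to this $\mathbf{x}\in\Sigma_\epsilon$ and this $\xi$ asserts $(\xi-\epsilon/2,\xi+\epsilon/2)\cap\mathcal{F}_{\mathbf{x},\delta}\ne\emptyset$. Hence every complementary interval has length $<\epsilon$.

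Finally I would sum over the at most $M+1$ gaps: $1-|\mathcal{F}_{\mathbf{x},\delta}|=|[0,1]\setminus\mathcal{F}_{\mathbf{x},\delta}|$ equals the total length of the gaps, each of length at most $\epsilon$, so $1-|\mathcal{F}_{\mathbf{x},\delta}|\le(M+1)\epsilon=\epsilon(M+1)$, as claimed. I do not expect a real obstacle here: the lemma is elementary, and the only points needing a touch of care are the count that a union of $\le M$ closed intervals leaves at most $M+1$ complementary intervals (handled by the merging step) and the separate choice of $\xi$ for the two boundary gaps. The genuine difficulty in this circle of ideas lies not in this lemma but in later showing that suitable pairs $(\epsilon,\delta)$ do belong to $A$.
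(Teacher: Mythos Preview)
Your argument is correct and essentially identical to the paper's: both exploit that the complement of $\mathcal{F}_{\mathbf{x},\delta}$ in $[0,1]$ consists of at most $M+1$ intervals and that the defining property of $A$ forbids any such gap from having length $\geq\epsilon$ (the paper reaches the long gap via a global contradiction and pigeonhole, you bound each gap directly and sum---these are the same idea). One small remark: the ``discarding the Lebesgue-null isolated points'' step is unnecessary and, if taken literally, would enlarge the gaps you are trying to bound; simply keep $\mathcal{F}_{\mathbf{x},\delta}$ as a union of at most $M$ (possibly degenerate) closed intervals and the complement count $\le M+1$ and your per-gap contradiction go through verbatim.
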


\begin{proof}
Assume not, namely assume that there exists $\left(\epsilon,\delta\right)\in A$
and $\mathbf{x}\in\Sigma_{\epsilon}$ such that 
\[
1-\left|\mathcal{F}_{\mathbf{x},\delta}\right|>\epsilon\left(M+1\right).
\]
Since $\left[0,1\right]\smallsetminus\mathcal{F}_{\mathbf{x},\delta}$
consists of at most $m+1\leq M+1$ intervals, there exists one of
them, say 
\[
\left[0,1\right]\smallsetminus\mathcal{F}_{\mathbf{x},\delta}\supseteq I_{0},
\]
with length $l>\frac{\epsilon\left(M+1\right)}{m+1}\geq\epsilon$.
Then for the center point of $I_{0}$, say $\xi_{0}\in I_{0}$, we
have 
\[
I_{0}\supseteq\left(\xi_{0}-\frac{\epsilon}{2},\xi_{0}+\frac{\epsilon}{2}\right).
\]
Namely, there exists $\xi_{0}\in\left[0,1\right]$ such that $(\xi_{0}-\frac{\epsilon}{2},\xi_{0}+\frac{\epsilon}{2})\cap\mathcal{F}_{\mathbf{x},\delta}=\emptyset$,
which is a contradiction since $\left(\epsilon,\delta\right)\in A$.
\end{proof}
\begin{lem}
\label{lem:Sigma_epsilon lemma}For all $\epsilon>0$ such that $\Sigma_{\epsilon}\neq\emptyset$
there exists $\delta>0$ such that $\left(\epsilon,\delta\right)\in A.$
\end{lem}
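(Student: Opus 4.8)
The plan is to fix $\epsilon>0$ with $\Sigma_\epsilon\neq\emptyset$ and to produce a single $\delta>0$ that works simultaneously for all $\mathbf{x}\in\Sigma_\epsilon$ and all $\xi\in[0,1]$. The natural candidate is
\[
\delta\df\inf\left\{\,\max_{t\in[\xi-\epsilon/2,\,\xi+\epsilon/2]\cap[0,1]}\left|F(\mathbf{x},t)\right|\ \Big|\ \mathbf{x}\in\Sigma_\epsilon,\ \xi\in[0,1]\,\right\},
\]
so that the whole content of the lemma is the assertion that this infimum is strictly positive; indeed if $\delta>0$ then for every $\mathbf{x}\in\Sigma_\epsilon$ and every $\xi\in[0,1]$ there is some $t$ in the interval $(\xi-\epsilon/2,\xi+\epsilon/2)$ (intersected with $[0,1]$, and after a harmless shift of $\xi$ to keep the interval inside $[0,1]$) with $|F(\mathbf{x},t)|\geq\delta$, i.e. $t\in\mathcal{F}_{\mathbf{x},\delta}$, which is exactly membership in $A$.

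The key steps are as follows. First I would note that $\Sigma_\epsilon$ is compact (it is a closed subset of $[0,1]^N$) and that the map
\[
G(\mathbf{x},\xi)\df\max_{t\in[\xi-\epsilon/2,\,\xi+\epsilon/2]\cap[0,1]}\left|F(\mathbf{x},t)\right|
\]
is continuous on the compact set $\Sigma_\epsilon\times[0,1]$, since $F$ is continuous and the maximum of a continuous function over a compactly-varying interval depends continuously on the parameters. Therefore the infimum defining $\delta$ is attained, say at some $(\mathbf{x}_0,\xi_0)$. Second, I would argue $\delta=G(\mathbf{x}_0,\xi_0)>0$: if it were zero then $F(\mathbf{x}_0,t)=0$ for all $t$ in a nondegenerate subinterval $J$ of $[0,1]$ of length at least $\epsilon/2$ (the interval $[\xi_0-\epsilon/2,\xi_0+\epsilon/2]\cap[0,1]$ is nondegenerate), and since $F(\mathbf{x}_0,\cdot)$ is analytic on $[0,1]$ and vanishes on a set with an accumulation point, it vanishes identically; this forces $\mathbf{x}_0\in\Sigma$, contradicting $\mathbf{x}_0\in\Sigma_\epsilon$ (on which $F(\mathbf{x},\cdot)\not\equiv0$ by the remark following \eqref{eq:definition of Sigma eps}). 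Third, with this $\delta>0$ in hand I would verify the defining condition of $A$ in \eqref{eq:definition of A}: given $\mathbf{x}\in\Sigma_\epsilon$ and $\xi\in[0,1]$, the point $t^\ast$ achieving $G(\mathbf{x},\xi)$ lies in $[\xi-\epsilon/2,\xi+\epsilon/2]\cap[0,1]$ and satisfies $|F(\mathbf{x},t^\ast)|=G(\mathbf{x},\xi)\geq\delta$, so $t^\ast\in(\xi-\epsilon/2,\xi+\epsilon/2)\cap\mathcal{F}_{\mathbf{x},\delta}$ provided $t^\ast$ is interior; the only nuisance is that $t^\ast$ could be an endpoint $\xi\pm\epsilon/2$, which is avoided by replacing the closed interval by its open version in the definition of $G$, or equivalently by shrinking to $(\xi-\epsilon/2,\xi+\epsilon/2)$ throughout — the analyticity argument is unaffected because an open interval still has an accumulation point.

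I expect the main (minor) obstacle to be purely a matter of bookkeeping rather than depth: making sure the clipped interval $[\xi-\epsilon/2,\xi+\epsilon/2]\cap[0,1]$ is always nondegenerate (it has length at least $\epsilon/2$ for every $\xi\in[0,1]$, so this is automatic) and that the "max over an open interval'' can legitimately be used in place of the max over its closure without breaking continuity of $G$ — one cleanly circumvents this by working with the closed interval for the compactness/continuity argument and only invoking the strict inequality $|F(\mathbf{x},t^\ast)|\geq\delta$ at the end, noting that one may always translate $\xi$ slightly so that $t^\ast$ becomes an interior point of $(\xi-\epsilon/2,\xi+\epsilon/2)$, or simply observe that since $G(\mathbf{x},\xi)\ge\delta>0$ and $F(\mathbf{x},\cdot)$ is not locally constant near $t^\ast$ (again by analyticity and $\mathbf{x}\notin\Sigma$), the superlevel set $\{|F(\mathbf{x},\cdot)|\ge\delta'\}$ for a slightly smaller $\delta'$ meets the open interval. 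Either way, no new ideas beyond compactness and the identity theorem for analytic functions are needed.
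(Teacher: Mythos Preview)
Your proposal is correct and rests on the same two ingredients as the paper's proof: compactness of $\Sigma_\epsilon\times[0,1]$ and the identity principle for the analytic function $F(\mathbf{x}_0,\cdot)$. The only difference is packaging: you argue directly by showing the continuous function $G(\mathbf{x},\xi)=\max_{t\in[\xi-\epsilon/2,\xi+\epsilon/2]\cap[0,1]}|F(\mathbf{x},t)|$ attains a strictly positive minimum, whereas the paper argues by contradiction, taking a sequence $\delta_n\to 0$ with witnesses $(\mathbf{x}_{\delta_n},\xi_{\delta_n})$, passing to a convergent subsequence, and concluding $F(\mathbf{x}_0,\cdot)\equiv 0$ on a neighborhood of the limit $\xi_0$. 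The contradiction route has the mild advantage that it never needs your open/closed endpoint bookkeeping (since ``$|F|<\delta$ on an open interval'' passes to the limit cleanly); conversely, your direct argument actually exhibits a concrete $\delta$. Either way, no idea beyond compactness plus analyticity is required, and your endpoint fix (replacing $\delta$ by $\delta/2$ and using continuity of $F$) is sound.
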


\begin{proof}
Fix an arbitrary $\epsilon>0$ with $\Sigma_{\epsilon}\neq\emptyset$.
Assume (for contradiction) that the statement of the lemma is not
true. Then, for all $\delta\in(0,1]$, $\exists\mathbf{x}_{\delta}\in\Sigma_{\epsilon},\ \exists\xi_{\delta}\in\left[0,1\right],$
such that $\left(\xi_{\delta}-\frac{\epsilon}{2},\xi_{\delta}+\frac{\epsilon}{2}\right)\subseteq[0,1]\smallsetminus\mathcal{F}_{\mathbf{x}_{\delta},\delta},$
namely 
\begin{equation}
\left|F(\mathbf{x}_{\delta},t)\right|<\delta,\ \forall t\in\left(\xi_{\delta}-\frac{\epsilon}{2},\xi_{\delta}+\frac{\epsilon}{2}\right).\label{eq:bound on F(X_delta,t) in nbhd of xi_delta}
\end{equation}
By compactness of $\Sigma_{\epsilon}$ and $\left[0,1\right]$ we
obtain a sequence $\{\delta_{n}\}_{n=1}^{\infty}$ such that $\delta_{n}\to0$,
$\mathbf{x}_{\delta_{n}}\to\mathbf{x}_{0}\in\Sigma_{\epsilon}$ and
$\xi_{\delta_{n}}\to\xi_{0}\in[0,1]$, and by (\ref{eq:bound on F(X_delta,t) in nbhd of xi_delta})
we deduce that there is a neighborhood of $\xi_{0}$ in which $F(\mathbf{x}_{0},\cdot)\equiv0$.
Since $F(\mathbf{x}_{0},\cdot)$ is analytic, it vanishes on $\left[0,1\right]$,
which is a contradiction as $\mathbf{x}_{0}\in\Sigma_{\epsilon}$.
\end{proof}
\begin{proof}[Proof of Proposition \ref{prop:uniformity on the lower bound and complement},
bound (\ref{eq:bound on area of F_x_eps^alph})]
 By Lemma \ref{lem:Sigma_epsilon lemma} there exists $\epsilon_{0}>0$
such that $(0,\epsilon_{0})\subseteq\pi_{2,1}(A)$, where $\pi_{2,1}:\R^{2}\to\R$
is the projection to the first coordinate.

By Lemma \ref{lem:A definability} we get that $A$ is definable in
$\R_{an}$, and by the \emph{definable choice theorem }(see Theorem
\ref{thm:choice function}), there is a definable function $\delta\left(\cdot\right):\left(0,\epsilon_{0}\right)\to\left(0,1\right)$
whose graph is in $A$. Since $\R_{an}$ is polynomially bounded,
we obtain by Corollary \ref{cor:polynomially bounded from below}
that there is an $\alpha>0$ such that $\delta\left(\epsilon\right)\geq\epsilon^{\alpha}$
for all $\epsilon\in\left(0,\epsilon'_{0}\right)$ for some $0<\epsilon_{0}'<\epsilon_{0}$.
This completes the proof since
\[
1-\left|\mathcal{F}_{\mathbf{x},\epsilon^{\alpha}}\right|\leq1-\left|\mathcal{F}_{\mathbf{x},\delta(\epsilon)}\right|\underset{\text{Lemma \ref{lem:elements in A bound on length}}}{\leq}(M+1)\epsilon,\ \ \forall\epsilon\in\left(0,\epsilon'_{0}\right).
\]
\end{proof}

\section{\label{sec:Proof-of-theorem all omega}Proof of theorem \ref{thm:all omega}}

Given $\phi:\left[0,1\right]^{m}\times[0,1]\to\R^{d}$, $\mathbf{x}\in[0,1]^{m}$,
$\rho\in\R_{>0}$ and $\mathbf{h}\in\Z^{d}\smallsetminus\left\{ \mathbf{0}\right\} $,
we define
\begin{equation}
f_{n,\mathbf{x},\rho}(t)\overset{\text{def}}{=}\rho\left\langle \mathbf{h},\phi\left(\mathbf{x},\frac{t}{n}\right)\right\rangle ,\ t\in[0,n].\label{eq:def of f_n}
\end{equation}
We shall always assume that $\phi$ is analytic. The main result of
this section is the following.
\begin{prop}
\label{prop:decay of exponental sums} Let $2\leq l\in\N$ and $\mathbf{h}\in\Z^{d}\smallsetminus\{\mathbf{0}\}$
be such that for all $\mathbf{x}\in\left[0,1\right]^{m}$ the function
$F_{l}(\mathbf{x},t)\df\frac{\partial^{l}}{\partial t^{l}}\left\langle \mathbf{h},\phi\left(\mathbf{x},t\right)\right\rangle $
is non-zero in $t$. Assume that $\left\{ \delta_{n}\right\} _{n=1}^{\infty},\ \left\{ \eta_{n}\right\} _{n=1}^{\infty}\subseteq\left[0,1\right)$
are such that $\lim_{n\to\infty}n^{\delta_{n}}=\lim_{n\to\infty}n^{\eta_{n}}=\infty$.
Then there exists a sequence $\left\{ E_{n}\right\} _{n=1}^{\infty}$
converging to zero such that for all $\mathbf{x}\in\left[0,1\right]^{m}$
and $\rho\in[n^{\delta_{n}},n^{l-\eta_{n}}]$ it holds
\begin{equation}
\frac{1}{n}\sum_{k=1}^{n}e\left(f_{n,\mathbf{x},\rho}(k)\right)\ll E_{n},\label{eq:weyls criterion for dilated curves}
\end{equation}
where the implied constant is independent of $\mathbf{x}\in\left[0,1\right]^{m}$
and $\rho\in[n^{\delta_{n}},n^{l-\eta_{n}}]$.
\end{prop}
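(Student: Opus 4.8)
The plan is to estimate the exponential sum by the classical Weyl/van der Corput method, using the higher derivative $F_l(\mathbf{x},\cdot)$ as the source of oscillation, and to make the estimate uniform in $\mathbf{x}$ by invoking Proposition \ref{prop:uniformity on the lower bound and complement} applied to $F = F_l$. The key point is that $f_{n,\mathbf{x},\rho}^{(l)}(t) = \rho\, n^{-l} F_l(\mathbf{x}, t/n)$, so on the region where $|F_l(\mathbf{x}, t/n)| \geq \epsilon^{\alpha}$ the $l$-th derivative of the phase has size $\asymp \rho\, n^{-l} \epsilon^{\alpha}$, which is a small (but not too small) positive quantity given the hypothesis $\rho \in [n^{\delta_n}, n^{l-\eta_n}]$. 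First I would recall the Weyl differencing inequality (the van der Corput lemma for $l$-th derivatives, e.g. as in the book \cite{Anaytic_num_theo}): if $g:[a,b]\to\R$ is $C^l$ with $|g^{(l)}| \asymp \lambda$ on $[a,b]$, then
\[
\Bigl| \sum_{a < k \leq b} e(g(k)) \Bigr| \ll (b-a)\,\lambda^{1/(2^l - 2)} + (b-a)^{1-2^{1-l}} \lambda^{-1/(2^l-2)},
\]
and note that this bound is $o(b-a)$ precisely when $\lambda \to 0$ polynomially in $b-a$ and $\lambda^{-1}$ grows slower than a suitable power of $b-a$ — which is exactly what the hypotheses $n^{\delta_n}\to\infty$, $n^{\eta_n}\to\infty$ guarantee after choosing $\epsilon = \epsilon_n \to 0$ slowly.

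The main steps, in order, are as follows. (i) Choose a sequence $\epsilon_n \to 0$ tending to zero slowly enough that both $n^{\delta_n}\epsilon_n^{\alpha}\to\infty$ (so the derivative lower bound is useful) and $\epsilon_n$ still dominates any fixed negative power obstruction in the van der Corput bound; concretely one can take $\epsilon_n$ to be a slowly decaying function like a small power of $\min(n^{\delta_n}, n^{\eta_n})$. (ii) Split $[0,1]^m$ into $\Sigma_{\epsilon_n}$ and its complement. Points $\mathbf{x}$ with $\mathrm{dist}(\mathbf{x},\Sigma) < \epsilon_n$ form a set whose contribution we control by a continuity/covering argument — the set of such $\mathbf{x}$ has measure $O(\epsilon_n)$ and, more importantly, since we need a bound for \emph{every} $\mathbf{x}$, we instead observe that even for $\mathbf{x}$ near $\Sigma$ the function $F_l(\mathbf{x},\cdot)$ is not identically zero (as $\Sigma$ is a proper subset relative to the defining property, but here we must be careful — see the obstacle below). (iii) For $\mathbf{x}\in\Sigma_{\epsilon_n}$, use Proposition \ref{prop:uniformity on the lower bound and complement} to write $\mathcal{F}_{\mathbf{x},\epsilon_n^{\alpha}} = I_1 \cup \dots \cup I_m$ with $m = O(1)$ and total complement length $\ll \epsilon_n$. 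On each interval $n I_j$ (rescaled to $[0,n]$) apply the van der Corput estimate with $\lambda \asymp \rho n^{-l}\epsilon_n^{\alpha} \in [n^{\delta_n - l}\epsilon_n^{\alpha},\, n^{-\eta_n}\epsilon_n^{\alpha}]$; sum over the $O(1)$ intervals. (iv) Bound the contribution of $k/n$ lying in the complement of $\mathcal{F}_{\mathbf{x},\epsilon_n^{\alpha}}$ trivially by $n \cdot |[0,1]\setminus \mathcal{F}_{\mathbf{x},\epsilon_n^{\alpha}}| + O(m) \ll n\epsilon_n + O(1)$ using that each omitted interval of length $\ell$ contains at most $n\ell + 1$ of the points $k/n$. (v) Collect: the total is $\ll n(\epsilon_n + \lambda^{c} + \lambda^{-c} n^{-c'})$ for appropriate positive constants depending only on $l$, and the choice of $\epsilon_n$ forces this to be $n \cdot E_n$ with $E_n \to 0$; set $E_n$ to be this explicit quantity, noting it depends only on $n, l, \alpha$ and the sequences $\delta_n, \eta_n$, not on $\mathbf{x}$ or $\rho$.

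The hard part, and the reason the hypothesis is phrased via $F_l$ rather than the RND order directly, is step (ii): handling $\mathbf{x}$ with $\mathrm{dist}(\mathbf{x},\Sigma) < \epsilon_n$ — here $\Sigma = \{\mathbf{x} : F_l(\mathbf{x},\cdot)\equiv 0\}$. For such $\mathbf{x}$ the derivative $F_l(\mathbf{x},\cdot)$ may be uniformly tiny, so the van der Corput bound with $l$-th derivative degenerates; one must instead either (a) pass to a lower-order derivative that is non-degenerate there — but the hypothesis only controls $F_l$, not $F_{l-1}$ — or (b) absorb these $\mathbf{x}$ by a compactness/uniformity argument showing that the \emph{set} $\Sigma$ is in fact empty under the proposition's hypothesis. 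Indeed I would argue that the hypothesis ``$F_l(\mathbf{x},t)$ is non-zero in $t$ for \emph{all} $\mathbf{x}$'' means precisely $\Sigma = \emptyset$, so by our convention $\Sigma_{\epsilon} = [0,1]^m$ for all $\epsilon$ and step (ii) is vacuous — the subtlety dissolves, and the whole argument runs uniformly over $[0,1]^m$ with the single sequence $\epsilon_n$. The remaining care is purely bookkeeping: tracking that all implied constants in van der Corput and in Proposition \ref{prop:uniformity on the lower bound and complement} depend only on $F_l$ (hence on $\phi$ and $\mathbf{h}$), and verifying that the exponent arithmetic $\delta_n - l < 0 < -\eta_n$ combined with $n^{\delta_n}, n^{\eta_n}\to\infty$ indeed makes every error term $o(n)$ uniformly.
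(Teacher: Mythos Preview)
Your overall architecture---apply Proposition \ref{prop:uniformity on the lower bound and complement} to $F_l$ (noting correctly that the hypothesis makes $\Sigma=\emptyset$, so step (ii) is vacuous), split $[0,1]$ into the good set $\mathcal F_{\mathbf x,\epsilon_n^\alpha}$ and its complement, and run van der Corput on the $O(1)$ intervals---matches the paper. The gap is in the van der Corput step itself: using the $l$-th derivative \emph{throughout} does not give a bound that decays across the whole range $\rho\in[n^{\delta_n},n^{l-\eta_n}]$ once $l\ge 4$.

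Concretely, with $\rho=n^\lambda$ the $l$-th derivative test (Theorem \ref{thm:van der corput q derivative test}) gives, after dividing by $n$ and ignoring the $\epsilon$-factors (which only hurt you),
\[
n^{(\lambda-l)\tau_l}\;+\;n^{(l-\lambda)\tau_l-2^{2-l}},\qquad \tau_l=(2^l-2)^{-1}.
\]
The second term decays only when $l-\lambda<2^{2-l}/\tau_l=4-2^{3-l}$. For $l=2,3$ this reads $\lambda>0$ and you are fine, but for $l=4$ it forces $\lambda>1/2$, for $l=5$ it forces $\lambda>5/4$, and in general $\lambda>l-4+2^{3-l}$. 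Since $\lambda$ is allowed to be as small as $\delta_n\to 0$, your bound blows up at the bottom of the range for every $l\ge4$; no choice of $\epsilon_n$ rescues this, because the $\epsilon$-powers in both van der Corput terms are non-positive.

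The paper repairs exactly this by choosing the order of the derivative \emph{adaptively}: for $\rho=n^\lambda$ it applies the $j$-th derivative test with $j=j_n(\lambda)\approx\lceil\lambda\rceil+1$, so that $j-\lambda$ stays in a bounded window $[\eta_n,2]$ and both terms decay. This is possible precisely because your parenthetical remark ``the hypothesis only controls $F_l$, not $F_{l-1}$'' is false: if $F_l(\mathbf x,\cdot)$ is non-zero for every $\mathbf x$ then so is $F_j(\mathbf x,\cdot)$ for every $j\le l$ (were $F_j(\mathbf x,\cdot)\equiv0$, differentiating $l-j$ times would give $F_l(\mathbf x,\cdot)\equiv0$). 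One therefore applies Proposition \ref{prop:uniformity on the lower bound and complement} to each $F_j$, $2\le j\le l$, obtaining constants $\alpha_j,M_j$, and then carries out your steps (iii)--(v) with $j=j_n(\lambda)$ in place of $l$. The bookkeeping (the choice of $\epsilon_n=n^{-\nu_n(\lambda)}$ and the verification that the resulting error $E_n$ is uniform in $\lambda$) is where the actual work lies.
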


\begin{proof}[Proof that Proposition \ref{prop:decay of exponental sums} yields
Theorem \ref{thm:all omega}]
 Let $\phi:\left[0,1\right]^{m}\times[0,1]\to\R^{d}$ be a RND analytic
family of order $\kappa\in\N\cup\left\{ \infty\right\} $. Then by
the definition of $\RND$ (see Definition \ref{def:rnd conditions}),
it holds for all\textbf{ $\mathbf{h}\in\Z^{d}\smallsetminus\{\mathbf{0}\}$}
and for all $\mathbf{x}\in\left[0,1\right]^{m}$ that $\frac{\partial^{l}}{\partial t^{l}}\left\langle \mathbf{h},\phi\left(\mathbf{x},t\right)\right\rangle $
is non-zero in $t$ for all $l\leq\kappa$. Fix $\N\ni l\leq\kappa$
and let $\rho_{n}\to\infty$ such that $\rho_{n}=o(n^{l})$. Then
there exist sequences $\left\{ \delta_{n}\right\} _{n=1}^{\infty},\ \left\{ \eta_{n}\right\} _{n=1}^{\infty}\subseteq\left[0,1\right)$
that satisfy $\lim_{n\to\infty}n^{\delta_{n}}=\lim_{n\to\infty}n^{\eta_{n}}=\infty$
for which 
\[
n^{\delta_{n}}\leq\rho_{n}\leq n^{l-\eta_{n}},
\]
 for all large enough $n$. Using the notation (\ref{eq:curves})
we rewrite (\ref{eq:def of f_n}) by
\[
f_{n,\mathbf{x}_{n},\rho_{n}}(t)=\left\langle \mathbf{h},\gamma_{n}\left(\frac{t}{n}\right)\right\rangle ,
\]
where $\mathbf{x}_{n}\in\left[0,1\right]^{m}$. By Weyl's equidistribution
criterion (see e.g. \cite[Chapter 21]{Anaytic_num_theo}) we deduce
that Proposition \ref{prop:decay of exponental sums} implies the
equidistribution of $\left\{ \mu_{n}\right\} _{n=1}^{\infty}$ (defined
in (\ref{eq:measures of interest})).
\end{proof}
Our main tool in the proof of Proposition \ref{prop:decay of exponental sums}
will be the following Van der Corput estimate which we borrow from
\cite[Chapter 8, Theorem 8.20]{Anaytic_num_theo}.
\begin{thm}
\label{thm:van der corput q derivative test} For all $2\leq j\in\N$
there exists a constant $\kappa_{j}>0$ such that $\forall g\in C^{j}(I)$
where $I=\left[a,b\right]$ with $b-a\geq1$ that satisfy 
\[
\eta\leq\left|\frac{d^{j}}{dt^{j}}g(t)\right|\leq\sigma\eta,\ \ \forall t\in I,
\]
with $\eta>0$ and $\sigma\geq1$, it holds
\[
\left|\sum_{k\in I\cap\Z}e(g(k))\right|\leq\kappa_{j}\left(\sigma^{2^{2-j}}\eta^{\tau_{j}}|I|+\eta^{-\tau_{j}}|I|^{1-2^{2-j}}\right),
\]
where $\tau_{j}=\left(2^{j}-2\right)^{-1}$.
\end{thm}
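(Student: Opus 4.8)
The plan is to induct on $j$. The base case $j=2$ is the classical van der Corput second--derivative test, and the inductive step will be powered by the Weyl--van der Corput (``$A$--process'') inequality. Throughout write $S\df\sum_{k\in I\cap\Z}e(g(k))$; since a sufficiently large $\kappa_j$ absorbs the trivial bound $|S|\le|I|+1$ on any bounded range of $|I|$, I may assume $|I|$ is as large as I like, and since $|g^{(j)}|\ge\eta>0$ with $g^{(j)}$ continuous, $g^{(j)}$ keeps a constant sign on $I$.

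For the base case $j=2$ I would reproduce the standard argument, keeping track of $\sigma$. Here $\tau_2=\tfrac12$ and $2^{2-j}=1$, so the target is $|S|\ll\sigma\eta^{1/2}|I|+\eta^{-1/2}$; if $\eta\ge1$ the trivial bound $|S|\le|I|+1\ll|I|\le\sigma\eta^{1/2}|I|$ already suffices, so assume $\eta<1$. As $g'$ is strictly monotone, for each $\nu\in\Z$ the set $J_\nu\df\{t\in I:\,g'(t)\in[\nu-\tfrac12,\nu+\tfrac12)\}$ is an interval, and at most $1+\int_a^b|g''|\le1+\sigma\eta|I|$ of them are nonempty. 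On $J_\nu$ one has $e(g(n))=e(G(n))$ with $G(t)\df g(t)-\nu t$, where $G'$ is monotone with $|G'|\le\tfrac12$; splitting off the subinterval where $|G'|\le\theta$ (bounded trivially, as it has length $\ll\theta/\eta$), the two remaining pieces have $\|G'\|>\theta$ and are controlled by the Kuzmin--Landau first--derivative estimate $\bigl|\sum e(G(n))\bigr|\ll\theta^{-1}$. Summing over $\nu$ and optimising the auxiliary threshold $\theta\asymp\eta^{1/2}$ yields $|S|\ll\sigma\eta^{1/2}|I|+\eta^{-1/2}$. (Equivalently one could use truncated Poisson summation and the second--derivative bound $\bigl|\int_a^b e(\psi)\bigr|\ll(\min\psi'')^{-1/2}$ for exponential integrals.)

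For the step from exponent $j$ to $j+1$, let $g\in C^{j+1}(I)$ with $\eta\le|g^{(j+1)}|\le\sigma\eta$. For an integer $1\le H\le|I|$, Weyl--van der Corput gives
\[
|S|^2\ \le\ \frac{|I|+H}{H}\Bigl(|I|+1+2\sum_{1\le h<H}\Bigl|\sum_{n\in I_h}e\bigl(g_h(n)\bigr)\Bigr|\Bigr),\qquad g_h\df g(\cdot+h)-g,\ \ I_h\df I\cap(I-h).
\]
The key observation is that, since $g^{(j+1)}$ has constant sign, $g_h^{(j)}(t)=\int_t^{t+h}g^{(j+1)}$ has that same sign and satisfies $h\eta\le|g_h^{(j)}(t)|\le\sigma(h\eta)$ on $I_h$, with $|I_h|=|I|-h\ge1$: thus $g_h$ meets the hypothesis for exponent $j$ with the \emph{same} ratio $\sigma$ and with $\eta$ replaced by $h\eta$. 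Applying the induction hypothesis (and $|I_h|\le|I|$, using $1-2^{2-j}\ge0$), summing via $\sum_{h<H}h^{\tau_j}\ll H^{1+\tau_j}$, $\sum_{h<H}h^{-\tau_j}\ll H^{1-\tau_j}$ (as $0<\tau_j<1$), and feeding back (with $|I|+H\le2|I|$) gives
\[
|S|^2\ \ll_j\ \frac{|I|^2}{H}\ +\ \sigma^{2^{2-j}}\eta^{\tau_j}|I|^2H^{\tau_j}\ +\ \eta^{-\tau_j}|I|^{2-2^{2-j}}H^{-\tau_j}.
\]
I would then take $H\asymp\eta^{-2\tau_{j+1}}$ (clamped into $[1,|I|]$). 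The arithmetic identity $\tfrac{\tau_j}{1+\tau_j}=2\tau_{j+1}$ (equivalently $\tau_j(1-2\tau_{j+1})=2\tau_{j+1}$) makes this $H$ simultaneously send the second term to $\asymp\sigma^{2^{2-j}}\eta^{2\tau_{j+1}}|I|^2$ and the third to $\asymp\eta^{-2\tau_{j+1}}|I|^{2-2^{2-j}}$, while the first, $|I|^2/H\asymp\eta^{2\tau_{j+1}}|I|^2$, is dominated by the second since $\sigma\ge1$. Taking square roots and using $2^{2-j}=2\cdot2^{2-(j+1)}$ then yields exactly $|S|\ll_j\sigma^{2^{2-(j+1)}}\eta^{\tau_{j+1}}|I|+\eta^{-\tau_{j+1}}|I|^{1-2^{2-(j+1)}}$. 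Finally the clamping needs a short case check: for $\eta\ge1$ the optimal $H$ degenerates to $1$ and Weyl--van der Corput gives outright $|S|\ll|I|$, absorbed by the first term; for $\eta<|I|^{-(2^j-1)}$ the optimal $H$ would exceed $|I|$, but there $\eta^{-\tau_{j+1}}|I|^{1-2^{2-(j+1)}}\ge|I|^{3/2-2^{2-(j+1)}}\ge|I|$ (using $\tau_{j+1}(2^j-1)=\tfrac12$ and $2^{2-(j+1)}\le\tfrac12$), so the trivial bound already does the job.

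The hard part is the base case: it is not formal, requiring genuine harmonic--analytic input not available above --- the Kuzmin--Landau lemma, or truncated Poisson summation together with the van der Corput estimate for exponential integrals --- and the $\sigma$-dependence must be threaded through. In the inductive step the one delicate point is the bookkeeping of exponents: the mechanism is that differencing $g\mapsto g_h$ preserves the ratio $\sigma$ while the squaring built into Weyl--van der Corput halves the exponent of $\sigma$ at each stage (which is why it runs $2^{2-2},2^{2-3},\dots$), and one must check that the single scale $H\asymp\eta^{-2\tau_{j+1}}$ balances both target terms at once and that the short regimes where this $H$ falls outside $[1,|I|]$ coincide with those where the trivial bound is admissible.
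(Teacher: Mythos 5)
The paper does not prove this theorem at all: it is imported verbatim from Iwaniec--Kowalski, \emph{Analytic Number Theory}, Chapter~8, Theorem~8.20, so there is no in-text argument to compare against. Your reconstruction is nonetheless correct, and it is in fact the standard argument used in that source: induct on $j$ from the second-derivative test as base case, and at each step run the Weyl--van der Corput $A$-process, using that differencing $g\mapsto g(\cdot+h)-g$ lowers the derivative order by one while mapping $\eta\mapsto h\eta$ and leaving the ratio $\sigma$ unchanged (since $g^{(j+1)}$ keeps a fixed sign). The exponent bookkeeping checks out: $\tau_j/(1+\tau_j)=2\tau_{j+1}$ does balance the two main terms at $H\asymp\eta^{-2\tau_{j+1}}$, squaring halves the $\sigma$-exponent from $2^{2-j}$ to $2^{2-(j+1)}$, and your case analysis when the optimal $H$ leaves $[1,|I|]$ (using $(2^j-1)\tau_{j+1}=\tfrac12$) correctly reduces to the trivial bound. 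Only cosmetic points remain: $H$ should be taken as an integer (ceilings), in the base case you should cap $\theta=\min(\eta^{1/2},\tfrac14)$ so the Kuzmin--Landau hypothesis $\|G'\|>\theta$ is genuinely inherited from $|G'|>\theta$ on the range $|G'|\le\tfrac12$, and the $\theta/\eta$ length estimate should carry a $+1$ for the integer count, which is absorbed by the $\sum_\nu 1\ll 1+\sigma\eta|I|$ term exactly as you use it.
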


Before applying Theorem \ref{thm:van der corput q derivative test}
to the exponential sums of our interest (see Lemma \ref{prop:estimate of exp sum depending on nu and j}),
we need the following observation which follows from Section \ref{sec:Prelimeneries}.

Fix $2\leq l\in\N$ and $\mathbf{h}\in\Z^{d}\smallsetminus\{\mathbf{0}\}$
such that for all $\mathbf{x}\in\left[0,1\right]^{m}$ the analytic
function $\frac{\partial^{l}}{\partial t^{l}}\left\langle \mathbf{h},\phi\left(\mathbf{x},t\right)\right\rangle $
is non-zero in $t$. Let $\N\ni j\leq l$ and observe that $\frac{\partial^{j}}{\partial t^{j}}\left\langle \mathbf{h},\phi\left(\mathbf{x},t\right)\right\rangle $
is also non-zero in $t$ for all $\mathbf{x}\in\left[0,1\right]^{m}$.
The latter is equivalent to that $\Sigma^{(j)}$ (defined in (\ref{eq:definition of Sigma}))
is empty for the analytic function $F_{j}(\mathbf{x},t)\df\frac{\partial^{j}}{\partial t^{j}}\left\langle \mathbf{h},\phi\left(\mathbf{x},t\right)\right\rangle $,
for all $j\leq l$. Then, by Proposition \ref{prop:uniformity on the lower bound and complement},
we deduce that there exist $M_{j},\alpha_{j}>0$ such that for all
$\mathbf{x}\in\left[0,1\right]^{m}$ and $\epsilon\in(0,1)$ it holds
that

\[
\mathcal{F}_{\mathbf{x},\epsilon^{\alpha_{j}}}^{(j)}\overset{\text{def}}{=}\left\{ t\in\left[0,1\right]\mid\left|\frac{\partial^{j}}{\partial t^{j}}\left\langle \mathbf{h},\phi\left(\mathbf{x},t\right)\right\rangle \right|\geq\epsilon^{\alpha_{j}}\right\} ,
\]
is a union of at most $M_{j}$ intervals, say
\begin{equation}
\mathcal{F}_{\mathbf{x},\epsilon^{\alpha_{j}}}^{(j)}=I_{1,\mathbf{x}}^{(j)}\cup..\cup I_{m_{x},\mathbf{x}}^{(j)},\ m_{\mathbf{x}}\leq M_{j},\label{eq:f_j,n intervals}
\end{equation}
 and 
\begin{equation}
1-\left|\mathcal{F}_{\mathbf{x},\epsilon^{\alpha_{j}}}^{(j)}\right|\ll\epsilon.\label{eq:f_j,n_complemnt small}
\end{equation}

\begin{lem}
\label{prop:estimate of exp sum depending on nu and j} Assume that
$2\leq j\leq l$ and let $n\in\N$. Then for all $\mathbf{x}\in\left[0,1\right]^{m}$
and $\epsilon\in\left(0,1\right)$ we have
\[
\frac{1}{n}\sum_{k=1}^{n}e(f_{n,\mathbf{x},\rho}(k))\ll
\]
\begin{equation}
\epsilon^{\alpha_{j}(\tau_{j}-2^{2-j})}\left(\frac{\rho}{n^{j}}\right)^{\tau_{j}}+\epsilon^{-\tau_{j}\alpha_{j}}\left(\frac{\rho}{n^{j}}\right)^{-\tau_{j}}n^{-2^{2-j}}+\frac{1}{n}+\epsilon.\label{eq:estimate from vdc}
\end{equation}
where $\tau_{j}=\left(2^{j}-2\right)^{-1}$, and the implied constant
is independent of $\mathbf{x},n$ and $\epsilon$.
\end{lem}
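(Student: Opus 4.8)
The plan is to apply the Van der Corput estimate (Theorem \ref{thm:van der corput q derivative test}) on each of the subintervals where the $j$-th derivative is bounded below, and to trivially bound the contribution of the complement. First I would rewrite $f_{n,\mathbf{x},\rho}(t) = \rho\langle \mathbf{h}, \phi(\mathbf{x}, t/n)\rangle$ and compute that, by the chain rule, $\frac{d^j}{dt^j} f_{n,\mathbf{x},\rho}(t) = \frac{\rho}{n^j} F_j(\mathbf{x}, t/n)$, where $F_j(\mathbf{x},s) = \frac{\partial^j}{\partial s^j}\langle \mathbf{h}, \phi(\mathbf{x},s)\rangle$. Thus on the rescaled set $n \cdot \mathcal{F}_{\mathbf{x},\epsilon^{\alpha_j}}^{(j)} \subseteq [0,n]$ we have the lower bound $\left|\frac{d^j}{dt^j} f_{n,\mathbf{x},\rho}(t)\right| \geq \frac{\rho}{n^j}\epsilon^{\alpha_j}$. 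Since $F_j$ is continuous on the compact set $[0,1]^m \times [0,1]$, it is also bounded above by some constant $C$, giving the two-sided bound needed to invoke Theorem \ref{thm:van der corput q derivative test} with $\eta = \frac{\rho}{n^j}\epsilon^{\alpha_j}$ and $\sigma = C\epsilon^{-\alpha_j}$ (or $\sigma = \max(1, C\epsilon^{-\alpha_j})$ to respect $\sigma \geq 1$).

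Next I would split the sum $\sum_{k=1}^n e(f_{n,\mathbf{x},\rho}(k))$ according to the decomposition (\ref{eq:f_j,n intervals}): writing $n\cdot\mathcal{F}_{\mathbf{x},\epsilon^{\alpha_j}}^{(j)} = nI_{1,\mathbf{x}}^{(j)} \cup \cdots \cup nI_{m_{\mathbf{x}},\mathbf{x}}^{(j)}$ with $m_{\mathbf{x}} \leq M_j = O(1)$, the integers in $[n]$ fall either into one of these $O(1)$ rescaled intervals or into the complement $[0,n]\setminus n\mathcal{F}_{\mathbf{x},\epsilon^{\alpha_j}}^{(j)}$, which by (\ref{eq:f_j,n_complemnt small}) has total length $\ll n\epsilon$ and hence contains $\ll n\epsilon + O(1)$ integers, bounded trivially. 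For each interval $nI_{r,\mathbf{x}}^{(j)}$ of length $|nI_{r,\mathbf{x}}^{(j)}| \leq n$, Theorem \ref{thm:van der corput q derivative test} gives (when the interval has length $\geq 1$; shorter ones contribute $O(1)$) a bound of the form $\kappa_j\left(\sigma^{2^{2-j}}\eta^{\tau_j}|nI_{r,\mathbf{x}}^{(j)}| + \eta^{-\tau_j}|nI_{r,\mathbf{x}}^{(j)}|^{1-2^{2-j}}\right)$. Substituting $\eta = \frac{\rho}{n^j}\epsilon^{\alpha_j}$, $\sigma \ll \epsilon^{-\alpha_j}$, and bounding $|nI_{r,\mathbf{x}}^{(j)}| \leq n$, the first term becomes $\ll \epsilon^{-\alpha_j 2^{2-j}}\left(\frac{\rho}{n^j}\right)^{\tau_j}\epsilon^{\alpha_j\tau_j} n = \epsilon^{\alpha_j(\tau_j - 2^{2-j})}\left(\frac{\rho}{n^j}\right)^{\tau_j} n$, and the second becomes $\ll \left(\frac{\rho}{n^j}\right)^{-\tau_j}\epsilon^{-\alpha_j\tau_j} n^{1-2^{2-j}}$. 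Summing over the $O(1)$ intervals and dividing by $n$ produces exactly the first two terms of (\ref{eq:estimate from vdc}); the complement contributes $\ll \epsilon + \frac{1}{n}$, and the finitely many short intervals contribute $\ll \frac{1}{n}$ as well.

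The main point requiring care is the bookkeeping at the interval endpoints and short intervals: Theorem \ref{thm:van der corput q derivative test} requires $|I| \geq 1$, and the rescaled intervals $nI_{r,\mathbf{x}}^{(j)}$ need not all satisfy this, nor need their endpoints be integers. I would handle this by noting that any subinterval of $[0,n]$ of length $< 1$ contains at most one integer, contributing $O(1)$ to the sum and hence $O(1/n)$ after normalization, absorbed into the $\frac{1}{n}$ term; and that restricting to integer points of an interval only changes $|I|$ by $O(1)$, which is again harmless. A secondary subtlety is ensuring the implied constants are uniform in $\mathbf{x}$, $n$, and $\epsilon$: this follows because $M_j$, $\alpha_j$ from Proposition \ref{prop:uniformity on the lower bound and complement}, the bound $C = \sup |F_j|$, the constant $\kappa_j$ from Theorem \ref{thm:van der corput q derivative test}, and the implied constant in (\ref{eq:f_j,n_complemnt small}) are all independent of these parameters. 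This is essentially a routine though slightly delicate assembly, so I would present the computation compactly and flag the endpoint convention once.
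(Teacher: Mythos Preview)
Your proposal is correct and follows essentially the same route as the paper's proof: split the sum into the part over $n\mathcal{F}_{\mathbf{x},\epsilon^{\alpha_j}}^{(j)}$ (a bounded number of intervals on which the chain rule gives the two-sided bound $\eta=\frac{\rho}{n^j}\epsilon^{\alpha_j}$, $\sigma\asymp\epsilon^{-\alpha_j}$, and Theorem~\ref{thm:van der corput q derivative test} applies to intervals of length $\geq 1$) and the complement (trivially $\ll\epsilon$ by (\ref{eq:f_j,n_complemnt small})), then substitute and divide by $n$. Your treatment of short intervals, endpoint effects, the condition $\sigma\geq 1$, and uniformity of constants is, if anything, more explicit than the paper's own write-up.
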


\begin{proof}
Fix $2\leq j\leq l$. First, by using (\ref{eq:f_j,n intervals})
and (\ref{eq:f_j,n_complemnt small}), we obtain that 
\[
\#\left\{ 1\leq k\leq n\mid\ \frac{k}{n}\notin\mathcal{F}_{\mathbf{x},\epsilon^{\alpha_{j}}}^{(j)}\right\} \ll n\epsilon,
\]
which implies by the trivial estimate that
\[
\frac{1}{n}\sum_{\begin{array}{c}
_{1\leq k\leq n,\frac{k}{n}\notin\mathcal{F}_{\mathbf{x},\epsilon^{\alpha_{j}}}^{(j)}}\end{array}}e(f_{n,\mathbf{x},\rho}(k))\ll\epsilon.
\]
Next, let
\[
c_{j}\overset{\text{def}}{=}\sup\left\{ \left|\frac{\partial^{j}}{\partial t^{j}}\left\langle \mathbf{h},\phi\left(\mathbf{x},t\right)\right\rangle \right|\mid\mathbf{x}\in\left[0,1\right]^{m},\ t\in\left[0,1\right]\right\} ,
\]
and note that by the chain rule,
\[
\frac{d^{j}}{dt^{j}}f_{n,\mathbf{x},\rho}(t)=\frac{\rho}{n^{j}}\frac{\partial^{j}}{\partial t^{j}}\left\langle \mathbf{h},\phi\left(\mathbf{x},\frac{t}{n}\right)\right\rangle .
\]
Therefore, for all $t\in nI_{i,\mathbf{x}}^{(j)}$ (appearing in (\ref{eq:f_j,n intervals}))
\begin{equation}
\frac{\rho}{n^{j}}\epsilon^{\alpha_{j}}\leq\left|\frac{d^{j}}{dt^{j}}f_{n,\mathbf{x},\rho}(t)\right|\leq\frac{\rho}{n^{j}}c_{j}.\label{eq:sanwich on phi_j}
\end{equation}
We denote 
\[
\eta=\frac{\rho}{n^{j}}\epsilon^{\alpha_{j}},
\]
\[
\sigma=c_{j}\epsilon^{-\alpha_{j}},
\]
and we rewrite (\ref{eq:sanwich on phi_j}) by
\[
\eta\leq\left|\frac{d^{j}}{dt^{j}}f_{n,\mathbf{x},\rho}(t)\right|\leq\sigma\eta,\ \forall t\in nI_{i,\mathbf{x}}^{(j)}.
\]
Assume $\left|nI_{i,\mathbf{x}}^{(j)}\right|\geq1,$ then by Theorem
\ref{thm:van der corput q derivative test}  

\begin{equation}
\begin{aligned}\frac{1}{n}\sum_{\begin{array}{c}
1\leq k\leq n,\ k\in nI_{i,\mathbf{x}}^{(j)}\end{array}}e\left(f_{n,\mathbf{x},\rho}(k)\right)\\
\ll\epsilon^{\alpha_{j}(\tau_{j}-2^{2-j})}\left(\frac{\rho}{n^{j}}\right)^{\tau_{j}} & +\epsilon^{-\tau_{j}\alpha_{j}}\left(\frac{\rho}{n^{j}}\right)^{-\tau_{j}}n^{-2^{2-j}}.
\end{aligned}
\label{eq:estimate by theorem}
\end{equation}

Together with the trivial estimate on the intervals $nI_{i,\mathbf{x}}^{(j)}$
with $\left|nI_{i,\mathbf{x}}^{(j)}\right|<1$, we find that

\[
\begin{aligned}\frac{1}{n}\sum_{\begin{array}{c}
1\leq k\leq n,\ \frac{k}{n}\in\mathcal{F}_{\mathbf{x},\epsilon^{\alpha_{j}}}^{(j)}\end{array}}e\left(f_{n,\mathbf{x},\rho}(k)\right)\\
\ll\epsilon^{\alpha_{j}(\tau_{j}-2^{2-j})} & \left(\frac{\rho}{n^{j}}\right)^{\tau_{j}}+\epsilon^{-\tau_{j}\alpha_{j}}\left(\frac{\rho}{n^{j}}\right)^{-\tau_{j}}n^{-2^{2-j}}+\frac{1}{n}.
\end{aligned}
\]
\end{proof}

\begin{proof}[Proof of Proposition \ref{prop:decay of exponental sums}]
 Fix $2\leq l\in\N$ and let $\mathbf{h}\in\Z^{d}\smallsetminus\{\mathbf{0}\}$
such that for all $\mathbf{x}\in\left[0,1\right]^{m}$ the function
$\frac{\partial^{l}}{\partial t^{l}}\left\langle \mathbf{h},\phi\left(\mathbf{x},t\right)\right\rangle $
is non-zero in $t$. Let $\left\{ \delta_{n}\right\} _{n=1}^{\infty},\ \left\{ \eta_{n}\right\} _{n=1}^{\infty}\subseteq\left[0,1\right)$
be such that $\lim_{n\to\infty}n^{\delta_{n}}=\lim_{n\to\infty}n^{\eta_{n}}=\infty$
and assume that 
\begin{equation}
n^{\delta_{n}}\leq\rho\leq n^{l-\eta_{n}}.\label{eq:sandwich on rho}
\end{equation}
We pick $\lambda\in\R$ such that
\begin{equation}
\rho=n^{\lambda},\label{eq:rho_n as phi_n}
\end{equation}
so that by (\ref{eq:sandwich on rho})
\[
\delta_{n}\leq\lambda\leq l-\eta_{n}.
\]
According to $\lambda\in\left[\delta_{n},l-\eta_{n}\right]$, we define
\begin{equation}
j_{n}(\lambda)\overset{\text{def}}{=}\begin{cases}
2 & \delta_{n}\leq\lambda\leq1,\\
\left\lceil \lambda\right\rceil +1 & 1<\lambda<l-1,\\
l & l-1\leq\lambda\leq l-\eta_{n},
\end{cases}\label{eq:j_n def}
\end{equation}
and 
\begin{equation}
\nu_{n}(\lambda)\overset{\text{def}}{=}\frac{1}{2}\min\left\{ \frac{j_{n}(\lambda)-\lambda}{\alpha_{j_{n}(\lambda)}\left(\frac{2^{2-j_{n}(\lambda)}}{\tau_{j_{n}(\lambda)}}-1\right)},\frac{1}{\alpha_{j_{n}(\lambda)}}\left(\frac{2^{2-j_{n}(\lambda)}}{\tau_{j_{n}(\lambda)}}-\left(j_{n}(\lambda)-\lambda\right)\right)\right\} ,\label{eq:nu definition}
\end{equation}
where $\tau_{j}$ is defined in Theorem \ref{thm:van der corput q derivative test}
and $\alpha_{j}$ is given in Lemma \ref{prop:estimate of exp sum depending on nu and j}.
We would like to plug in 
\begin{equation}
\epsilon_{n}(\lambda)\overset{\text{def}}{=}n^{-\nu_{n}(\lambda)}\label{eq:def of e_n(lambda)}
\end{equation}
into the estimate (\ref{eq:estimate from vdc}). For that to be useful,
we would like first to verify that there exist $\epsilon_{n}$ such
that 
\[
\epsilon_{n}(\lambda)\leq\epsilon_{n},
\]
and $\epsilon_{n}\to0$ (this, by Lemma \ref{prop:estimate of exp sum depending on nu and j},
will yield estimate (\ref{eq:estimate of vdc with T_i(=00005Clambda)})
below). We verify this by estimating from below each of the terms
appearing in the minimum of (\ref{eq:nu definition}). It will be
useful to note that 
\begin{equation}
\frac{2^{2-j}}{\tau_{j}}=4-2^{3-j}.\label{eq:2^(2-j)/tau_j}
\end{equation}
\begin{itemize}
\item The term $\frac{j_{n}(\lambda)-\lambda}{\alpha_{j_{n}(\lambda)}\left(\frac{2^{2-j_{n}(\lambda)}}{\tau_{j_{n}(\lambda)}}-1\right)}$
: An inspection of (\ref{eq:j_n def}) implies that,
\[
\eta_{n}\leq j_{n}(\lambda)-\lambda,\ \forall\lambda\in[\delta_{n},l-\eta_{n}],
\]
and since $j_{n}(\lambda)\geq2$, we deduce by (\ref{eq:2^(2-j)/tau_j})
that $\frac{2^{2-j_{n}(\lambda)}}{\tau_{j_{n}(\lambda)}}-1\leq3$.
Hence 
\begin{equation}
\frac{j_{n}(\lambda)-\lambda}{\alpha_{j_{n}(\lambda)}\left(\frac{2^{2-j_{n}(\lambda)}}{\tau_{j_{n}(\lambda)}}-1\right)}\geq\frac{\eta_{n}}{3\alpha_{j_{n}(\lambda)}}.\label{eq:first term in the minimum}
\end{equation}
\item The term $\frac{1}{\alpha_{j_{n}(\lambda)}}\left(\frac{2^{2-j_{n}(\lambda)}}{\tau_{j_{n}(\lambda)}}-\left(j_{n}(\lambda)-\lambda\right)\right)$
: First assume that $\lambda>1$. Then, $j_{n}(\lambda)\geq3$, and
as a consequence (see (\ref{eq:2^(2-j)/tau_j}))
\[
\frac{2^{2-j_{n}(\lambda)}}{\tau_{j_{n}(\lambda)}}\geq3.
\]
 Moreover, $j_{n}(\lambda)-\lambda\leq2$. Hence
\begin{equation}
\frac{1}{\alpha_{j_{n}(\lambda)}}\left(\frac{2^{2-j_{n}(\lambda)}}{\tau_{j_{n}(\lambda)}}-\left(j_{n}(\lambda)-\lambda\right)\right)\geq\frac{1}{\alpha_{j_{n}(\lambda)}}.\label{eq:second term nu_n minimum when j>2}
\end{equation}
Next, assume $\lambda\leq1$. Then $j(\lambda)=2$, and we have 
\[
j_{n}(\lambda)-\lambda\leq2-\delta_{n},
\]
 whence
\begin{equation}
\frac{1}{\alpha_{j_{n}(\lambda)}}\left(\frac{2^{2-j_{n}(\lambda)}}{\tau_{j_{n}(\lambda)}}-\left(j_{n}(\lambda)-\lambda\right)\right)\geq\frac{\delta_{n}}{\alpha_{2}}.\label{eq:second term nu_n minimum when j=00003D2}
\end{equation}
\end{itemize}
We denote $\alpha=\max\{\alpha_{i}\}_{i=1}^{l}$ and $\nu_{n}=\frac{1}{2}\min\left\{ \frac{1}{\alpha},\ \frac{\delta_{n}}{\alpha},\ \frac{\eta_{n}}{3\alpha}\right\} $.
Then we conclude from (\ref{eq:nu definition}), (\ref{eq:first term in the minimum}),
(\ref{eq:second term nu_n minimum when j>2}) and (\ref{eq:second term nu_n minimum when j=00003D2})
that $\nu_{n}(\lambda)\geq\nu_{n}$. Importantly, we note that 
\[
\epsilon_{n}(\lambda)=\frac{1}{n^{\nu_{n}(\lambda)}}\leq\frac{1}{n^{\nu_{n}}}\overset{\text{def}}{=}\epsilon_{n}.
\]
Recall that $n^{-\eta_{n}}\to0$ and $n^{-\delta_{n}}\to0$, hence
$\epsilon_{n}\to0$. Now, by plugging in (\ref{eq:rho_n as phi_n})
and (\ref{eq:def of e_n(lambda)}) into (\ref{eq:estimate from vdc}),
we get that 
\begin{equation}
\frac{1}{n}\sum_{k=1}^{n}e\left(f_{n,\mathbf{x},\rho}(k)\right)\ll n^{T_{1,n}(\lambda)}+n^{T_{2,n}(\lambda)}+\frac{1}{n}+\epsilon_{n},\label{eq:estimate of vdc with T_i(=00005Clambda)}
\end{equation}
where
\[
T_{1,n}(\lambda)\overset{\text{def}}{=}\alpha_{j_{n}(\lambda)}\nu_{n}(\lambda)\left(2^{2-j_{n}(\lambda)}-\tau_{j_{n}(\lambda)}\right)-\tau_{j_{n}(\lambda)}\left(j_{n}(\lambda)-\lambda\right),
\]
\[
T_{2,n}(\lambda)\overset{\text{def}}{=}\tau_{j_{n}(\lambda)}\left(\alpha_{j_{n}(\lambda)}\nu_{n}(\lambda)+j_{n}(\lambda)-\lambda\right)-2^{2-j_{n}(\lambda)}.
\]
To finish the proof it remains to show that there exist sequences
$\left\{ T_{1,n}\right\} _{n=1}^{\infty},\ \left\{ T_{2,n}\right\} _{n=1}^{\infty}$
such that $T_{i,n}(\lambda)\leq T_{i,n}$ for $i=1,2$, and such that
$n^{T_{1,n}}\to0$ and $n^{T_{2,n}}\to0$.
\begin{itemize}
\item The term $T_{1,n}(\lambda)$: By definition of $\nu_{n}(\lambda)$
we have
\[
\nu_{n}(\lambda)\leq\frac{1}{2}\frac{j_{n}(\lambda)-\lambda}{\alpha_{j_{n}(\lambda)}\left(\frac{2^{2-j_{n}(\lambda)}}{\tau_{j_{n}(\lambda)}}-1\right)},
\]
hence, 
\begin{equation}
\begin{aligned}T_{1,n}(\lambda)\leq & \alpha_{j_{n}(\lambda)}\frac{1}{2}\frac{j_{n}(\lambda)-\lambda}{\alpha_{j_{n}(\lambda)}\left(\frac{2^{2-j_{n}(\lambda)}}{\tau_{j_{n}(\lambda)}}-1\right)}\left(2^{2-j_{n}(\lambda)}-\tau_{j_{n}(\lambda)}\right)-\tau_{j_{n}(\lambda)}\left(j_{n}(\lambda)-\lambda\right)\\
= & -\frac{1}{2}\tau_{j_{n}(\lambda)}\left(j_{n}(\lambda)-\lambda\right).
\end{aligned}
\label{eq:bound on T_1,n with j-phi}
\end{equation}
An inspection of (\ref{eq:j_n def}) shows that $\eta_{n}\leq j_{n}(\lambda)-\lambda,$
which combined with (\ref{eq:bound on T_1,n with j-phi}) gives
\[
T_{1,n}(\lambda)\leq-\frac{1}{2}\tau_{j_{n}(\lambda)}\eta_{n}.
\]
We define 
\[
T_{1,n}=-\frac{1}{2}\min\{\tau_{i}\}_{i=2}^{l}\eta_{n},
\]
then $T_{1,n}(\lambda)\leq T_{1,n}$ and as $n^{-\eta_{n}}\to0$,
we obtain that $n^{T_{1,n}}\to0$.
\item The term $T_{2,n}(\lambda)$: By our definition of $\nu_{n}(\lambda)$
we have
\[
\nu_{n}(\lambda)\leq\frac{1}{2}\frac{1}{\alpha_{j_{n}(\lambda)}}\left(\frac{2^{2-j_{n}(\lambda)}}{\tau_{j_{n}(\lambda)}}-\left(j_{n}(\lambda)-\lambda\right)\right),
\]
hence
\begin{equation}
\begin{aligned}T_{2,n}(\lambda)\leq & \tau_{j_{n}(\lambda)}\left(\alpha_{j_{n}(\lambda)}\frac{1}{2}\frac{1}{\alpha_{j_{n}(\lambda)}}\left(\frac{2^{2-j_{n}(\lambda)}}{\tau_{j_{n}(\lambda)}}-\left(j_{n}(\lambda)-\lambda\right)\right)+j_{n}(\lambda)-\lambda\right)-2^{2-j_{n}(\lambda)}\\
= & -\frac{\tau_{j_{n}(\lambda)}}{2}\left(\frac{2^{2-j_{n}(\lambda)}}{\tau_{j_{n}(\lambda)}}-\left(j_{n}(\lambda)-\lambda\right)\right).
\end{aligned}
\label{eq:T_2,n estimate}
\end{equation}
By (\ref{eq:second term nu_n minimum when j>2}) and (\ref{eq:second term nu_n minimum when j=00003D2})
we deduce from (\ref{eq:T_2,n estimate}) that 
\[
T_{2,n}(\lambda)\leq-\frac{1}{2}\tau_{j_{n}(\lambda)}\alpha_{j_{n}(\lambda)}\min\left\{ \frac{1}{\alpha_{j_{n}(\lambda)}},\frac{\delta_{n}}{\alpha_{2}}\right\} 
\]
Define 
\[
T_{2,n}=-\frac{1}{2}\min\{\tau_{i}\alpha_{i}\}_{i=2}^{l}\frac{\delta_{n}}{\max\{\alpha_{i}\}_{i=2}^{l}},
\]
then $T_{2,n}(\lambda)\leq T_{2,n}$ and since $n^{-\delta_{n}}\to0$,
we find that $n^{T_{2,n}}\to0$.
\end{itemize}
\end{proof}

\section{\label{sec:Proof-of-a.a. omega}Proof of theorem \ref{thm:almost all omega}}

For $\ef:\left[0,1\right]^{m}\times\R/\Z\to\R^{d}$, $\left\{ \mathbf{x}_{n}\right\} _{n=1}^{\infty}\subseteq\left[0,1\right]^{m},$
$\mathbf{h}\in\Z^{d}\smallsetminus\{\mathbf{0}\}$ and $\left\{ \rho_{n}\right\} _{n=1}^{\infty}\subseteq\R_{\geq0}$,
we define the function
\[
S_{n}(\omega)\overset{\text{def}}{=}\frac{1}{n}\sum_{k=1}^{n}e\left(\left\langle \mathbf{h},\rho_{n}\ef\left(\mathbf{x}_{n},\frac{k}{n}+\omega+\Z\right)\right\rangle \right),\ \omega\in\left[0,1\right].
\]
The following proposition is the main result of this section.
\begin{prop}
\label{prop:equivelent formulation to a.e,} Assume that $\ef:\left[0,1\right]^{m}\times\R/\Z\to\R^{d}$
is a family of $\RND$ analytic curves of order $\infty$. Let $\left\{ \mathbf{x}_{n}\right\} _{n=1}^{\infty}\subseteq\left[0,1\right]^{m},$
$\mathbf{h}\in\Z^{d}\smallsetminus\{\mathbf{0}\}$ and $\rho_{n}\to\infty$
be arbitrary. Then, for almost every $\omega\in\left[0,1\right]$,
\[
\lim_{n\to\infty}S_{n}(\omega)=0.
\]
\end{prop}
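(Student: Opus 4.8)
The plan is to prove the Proposition for the fixed $\mathbf{h}$ by splitting $\N$ according to the growth of $\rho_{n}$ and treating the two pieces by completely different methods. Set $\psi(\mathbf{x},\omega)\df\langle\mathbf{h},\ef(\mathbf{x},\omega)\rangle$, a $1$-periodic analytic function whose $\omega$-derivatives of every order are non-constant for every $\mathbf{x}$ (this is the $\RND$-of-order-$\infty$ hypothesis), and note that $S_{n}(\omega)=\frac1n\sum_{k=1}^{n}e\!\left(\rho_{n}\psi(\mathbf{x}_{n},\tfrac kn+\omega)\right)$. Fix a large integer $l_{0}$, to be chosen at the very end, and put $P=\{n:\rho_{n}\le n^{l_{0}}\}$ and $Q=\N\smallsetminus P$. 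On $P$ I will apply Proposition \ref{prop:decay of exponental sums} to the auxiliary family $\widetilde{\phi}\big((\mathbf{x},\omega),t\big)\df\ef(\mathbf{x},t+\omega+\Z)$ on the parameter space $\left[0,1\right]^{m+1}$, which is analytic and again $\RND$ of order $\infty$: picking $\delta_{n},\eta_{n}\to0$ with $n^{\delta_{n}},n^{\eta_{n}}\to\infty$ and $n^{\delta_{n}}\le\rho_{n}$ for large $n$ (possible since $\rho_{n}\to\infty$), Proposition \ref{prop:decay of exponental sums} with $l=l_{0}+1$ produces a single null sequence $E_{n}$ with $|S_{n}(\omega)|\ll E_{n}$ for every $\omega\in[0,1]$ and every large $n\in P$. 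Thus $S_{n}(\omega)\to0$ for \emph{all} $\omega$ along $P$, and the entire difficulty lies in $Q$.

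For $n\in Q$ the heart of the matter is a fourth-moment estimate
\[
\int_{0}^{1}|S_{n}(\omega)|^{4}\,d\omega\ \ll\ \frac1{n^{2}}+\rho_{n}^{-c},\qquad\text{for some }c=c(\ef,\mathbf{h})>0.
\]
Expanding and integrating in $\omega$ writes the left side as $n^{-4}\sum_{\mathbf{k}\in\{0,\dots,n-1\}^{4}}I_{n}(\mathbf{k})$, where $I_{n}(\mathbf{k})=\int_{0}^{1}e\!\left(\rho_{n}\Psi_{\mathbf{k}}(\omega)\right)d\omega$ with $\Psi_{\mathbf{k}}(\omega)=\psi(\mathbf{x}_{n},\omega+\tfrac{k_{1}}n)+\psi(\mathbf{x}_{n},\omega+\tfrac{k_{2}}n)-\psi(\mathbf{x}_{n},\omega+\tfrac{k_{3}}n)-\psi(\mathbf{x}_{n},\omega+\tfrac{k_{4}}n)$. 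Writing $\mathbf{s}_{\mathbf{k}}=(\tfrac{k_{1}-k_{4}}n,\tfrac{k_{2}-k_{4}}n,\tfrac{k_{3}-k_{4}}n)$, I will divide the $4$-tuples into three classes and set $\epsilon=\rho_{n}^{-1/(1+\alpha)}$, where $\alpha$ is the exponent produced by Proposition \ref{prop:uniformity on the lower bound and complement}. \emph{(i) Resonant} $\mathbf{k}$, with $\Psi_{\mathbf{k}}\equiv\mathrm{const}$: comparing Fourier coefficients at the smallest positive frequency $m_{0}(\mathbf{x}_{n})$ of $\psi(\mathbf{x}_{n},\cdot)$ — which is bounded by a constant $M=M(\ef,\mathbf{h})$ because $\mathbf{x}\mapsto m_{0}(\mathbf{x})$ is upper semicontinuous on the compact $[0,1]^{m}$ — such a tuple must satisfy $\zeta^{k_{1}}+\zeta^{k_{2}}=\zeta^{k_{3}}+\zeta^{k_{4}}$ for a primitive $q$-th root of unity $\zeta$ with $q\ge n/M$; an elementary sumset count for $q$-th roots of unity ($O(q^{2})$ solutions modulo $q$) bounds the number of resonant $\mathbf{k}$ by $O(n^{4}/q^{2})=O(n^{2})$, contributing $O(n^{-2})$. \emph{(ii) Near-resonant} $\mathbf{k}$, with $\Psi_{\mathbf{k}}$ non-constant but $(\mathbf{x}_{n},\mathbf{s}_{\mathbf{k}})$ within $\epsilon$ of the degeneracy locus $\Sigma'$ of the analytic function $\mathcal{G}(\mathbf{x},\mathbf{s},\omega)\df\partial_{\omega}\big(\psi(\mathbf{x},\omega+s_{1})+\psi(\mathbf{x},\omega+s_{2})-\psi(\mathbf{x},\omega+s_{3})-\psi(\mathbf{x},\omega)\big)$ on $[0,1]^{m+3}\times[0,1]$: since $\Sigma'$ is definable in $\R_{an}$ and its slice over every $\mathbf{x}$ is cut out by the vanishing of a complex-valued analytic function whose $s_{1}$-derivative never vanishes, hence has codimension $\ge2$ in $[0,1]^{3}$, an o-minimality estimate for the number of points of $\tfrac1n\Z^{3}$ in an $\epsilon$-neighbourhood of $\Sigma'$ (uniform in $\mathbf{x}_{n}$) bounds their number by $O(\epsilon^{2}n^{4}+n^{2})$, contributing $O(\epsilon^{2}+n^{-2})$. \emph{(iii) Generic} $\mathbf{k}$: here, after translating $\omega$ by $k_{4}/n$, the integral $I_{n}(\mathbf{k})$ has phase-derivative $\rho_{n}\mathcal{G}(\mathbf{x}_{n},\mathbf{s}_{\mathbf{k}},\cdot)$, which by Proposition \ref{prop:uniformity on the lower bound and complement} applied to $\mathcal{G}$ is $\ge\rho_{n}\epsilon^{\alpha}$ in absolute value off a set of $\omega$ of measure $\ll\epsilon$, on $O(1)$ intervals and with $O(1)$ sign changes of the second derivative, so a van der Corput first-derivative estimate for oscillatory integrals yields $|I_{n}(\mathbf{k})|\ll\epsilon+(\rho_{n}\epsilon^{\alpha})^{-1}$, contributing $O(\epsilon+(\rho_{n}\epsilon^{\alpha})^{-1})$ over the $\le n^{4}$ such tuples. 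With $\epsilon=\rho_{n}^{-1/(1+\alpha)}$ these combine to the displayed bound with $c=\tfrac1{1+\alpha}$.

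Finally choose $l_{0}$ so that $cl_{0}>1$; then $\rho_{n}^{-c}<n^{-cl_{0}}$ on $Q$, hence $\sum_{n\in Q}\int_{0}^{1}|S_{n}|^{4}<\infty$, so $\sum_{n\in Q}|S_{n}(\omega)|^{4}<\infty$ and therefore $S_{n}(\omega)\to0$ along $Q$ for a.e.\ $\omega$. Since $\N=P\sqcup Q$ and $S_{n}(\omega)\to0$ for every $\omega$ along $P$, we get $S_{n}(\omega)\to0$ for a.e.\ $\omega$, as required. The main obstacle is the fourth-moment bound on $Q$, and within it the two counting estimates (i) and (ii): the arithmetic bound on resonant $4$-tuples, which rests on extracting the uniform lower bound $q\ge n/M$ from compactness of the parameter space and on a root-of-unity sumset estimate, and the near-resonant bound, where one must count rational points of denominator $n$ in a shrinking neighbourhood of a possibly intricate degeneracy set uniformly in the parameter — and it is exactly here that the \emph{tameness} supplied by o-minimality, rather than mere real-analyticity, is indispensable.
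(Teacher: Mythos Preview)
Your overall architecture coincides with the paper's: split $\N$ by the size of $\rho_n$, use Proposition~\ref{prop:decay of exponental sums} on the polynomially bounded piece, and on the complementary piece prove a fourth-moment bound and conclude via Borel--Cantelli (the paper's Proposition~\ref{prop:fourth moment estimate}). The expansion of $\int|S_n|^4$, the reduction to oscillatory integrals $I_n(\mathbf{k})$, and the resonance analysis through the lowest nonzero Fourier mode $j_0(\mathbf{x})$ of $\psi(\mathbf{x},\cdot)$ (with the compactness argument bounding $j_0$) are all exactly what the paper does. The differences---first versus second derivative test, $\epsilon=\rho_n^{-1/(1+\alpha)}$ versus the paper's fixed $\epsilon=n^{-2}$, and your $3$-parameter $\mathbf{s}$ versus the paper's $\mathbf{y}\in[0,1]^4$---are inessential.

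There is, however, a genuine gap in step (ii). Your justification that $\Sigma'_{\mathbf x}$ has codimension $\ge 2$ is: ``it is cut out by a complex-valued analytic function whose $s_1$-derivative never vanishes''. This implication is false in general; take $F(s_1,s_2,s_3)=s_1+is_1^2$, whose $s_1$-derivative is $1+2is_1\neq 0$ but whose zero set $\{s_1=0\}$ has codimension $1$. Non-vanishing of $\partial_{s_1}F$ only says that one of $\partial_{s_1}\mathrm{Re}\,F,\ \partial_{s_1}\mathrm{Im}\,F$ is nonzero, which does not force $\nabla\mathrm{Re}\,F$ and $\nabla\mathrm{Im}\,F$ to be independent on $\{F=0\}$. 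In the situation at hand the conclusion \emph{is} true, but one needs the explicit computation: the paper (Lemma~\ref{lem:singular set}) actually solves $e(jy_1)-e(jy_2)+e(jy_3)-e(jy_4)=0$ and shows the solution set is contained in a finite union of explicit $2$-planes $H_i+\tfrac{1}{j_0}\mathbf{l}$ in $\R^4$ (equivalently, lines in your $\mathbf{s}$-coordinates).

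This also bears on your lattice-point count in (ii). Invoking ``an o-minimality estimate'' for the number of $\tfrac1n\Z^3$-points in an $\epsilon$-neighbourhood of a one-dimensional definable set, uniformly in $\mathbf{x}$, is not a result that can simply be quoted (it would require, e.g., uniform length bounds via Yomdin--Gromov or a Crofton-type argument, none of which the paper develops). Once you have the explicit description of the containing set as a finite union of affine lines with a uniform bound on their number, the count $\#\{\mathbf{k}:\mathrm{dist}(\tfrac{\mathbf{k}}{n},\Sigma')<\epsilon\}\ll n^2$ is elementary (cover each line segment by $O(n)$ balls of radius $\tfrac{1}{2n}$), and no o-minimality is needed at this step. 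In short: replace the flawed codimension argument and the vague o-minimal counting by the paper's explicit solution of the resonance equation, and your proof goes through.
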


We now show that Proposition \ref{prop:equivelent formulation to a.e,}
implies Theorem \ref{thm:almost all omega}. Since a countable intersection
of full measure sets is of full measure, it follows from Proposition
\ref{prop:equivelent formulation to a.e,} that \textbf{$\lim_{n\to\infty}S_{n}(\omega)=0$}
\emph{for all} $\mathbf{h}\in\Z^{d}\smallsetminus\left\{ \mathbf{0}\right\} $,
for almost every $\omega\in\left[0,1\right]$. Hence Theorem \ref{thm:almost all omega}
follows by Weyl's equidistribution criterion.

To prove Proposition \ref{prop:equivelent formulation to a.e,} we
will use the Borel-Cantelli lemma with the following estimate of fourth
moments which we prove in Section \ref{subsec:Proof-of-Proposition 4.2}.
\begin{prop}
\label{prop:fourth moment estimate} Assume that $\ef:\left[0,1\right]^{m}\times\R/\Z\to\R^{d}$
is a family of $\RND$ analytic curves of order $\infty$ and fix
$\mathbf{h}\in\Z^{d}\smallsetminus\{\mathbf{0}\}$. For $\mathbf{x}\in\left[0,1\right]^{m}$,
$\rho>0$ and $n\in\N$ let\emph{
\[
S_{n}(\mathbf{x},\rho,\omega)=\frac{1}{n}\sum_{k=1}^{n}e\left(\left\langle \mathbf{h},\rho\ef\left(\mathbf{x},\frac{k}{n}+\omega+\Z\right)\right\rangle \right),\ \omega\in\left[0,1\right].
\]
} Then, there exists $\tau>0$ such that for $n\in\N$, $\mathbf{x}\in\left[0,1\right]^{m}$
and $\rho\geq n^{\tau}$, it holds
\begin{equation}
\int_{0}^{1}\left|S_{n}(\mathbf{x},\rho,\omega)\right|^{4}d\omega\ll\frac{1}{n^{2}},\label{eq:4th moment estimate}
\end{equation}
where the implied constant is independent of the parameters $n$,
$\mathbf{x}$ and $\rho$.
\end{prop}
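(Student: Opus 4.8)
The plan is to expand the fourth moment into a quadruple sum, collapse it via the $1$-periodicity of $\ef(\mathbf{x},\cdot)$ to a sum over $(\Z/n\Z)^{3}$ of oscillatory integrals in $\omega$, and bound those integrals with Proposition~\ref{prop:uniformity on the lower bound and complement} and van der Corput's first derivative test. Write $g(\mathbf{x},v)\df\langle\mathbf{h},\ef(\mathbf{x},v+\Z)\rangle$, which is analytic and $1$-periodic in $v$. Expanding $|S_{n}(\mathbf{x},\rho,\omega)|^{4}=S_{n}^{2}\overline{S_{n}}^{2}$, integrating in $\omega$, and substituting $\omega\mapsto\omega-k_{1}/n$ in each summand (permissible since the integrand is $1$-periodic) shows that $\int_{0}^{1}(\cdots)\,d\omega$ depends on $(k_{1},k_{2},k_{3},k_{4})$ only through $(p,q,r)\df(k_{2}-k_{1},k_{3}-k_{1},k_{4}-k_{1})\bmod n$, each triple arising from exactly $n$ quadruples, so that
\[
\int_{0}^{1}\bigl|S_{n}(\mathbf{x},\rho,\omega)\bigr|^{4}d\omega=\frac{1}{n^{3}}\sum_{(p,q,r)\in(\Z/n\Z)^{3}}\int_{0}^{1}e\!\left(\rho\,H\bigl(\mathbf{x},\tfrac{p}{n},\tfrac{q}{n},\tfrac{r}{n},\omega\bigr)\right)d\omega ,
\]
with $H(\mathbf{x},a,b,c,\omega)\df g(\mathbf{x},\omega)+g(\mathbf{x},\omega+a)-g(\mathbf{x},\omega+b)-g(\mathbf{x},\omega+c)$ analytic on $[0,1]^{m+3}\times[0,1]$. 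I call a triple \emph{diagonal} if $\{0,p/n\}=\{q/n,r/n\}$ in $\R/\Z$; there are $O(n)$ of these, on each the integral equals $1$, so they contribute $O(1/n^{2})$.

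Next, let $\Sigma\subseteq[0,1]^{m+3}$ be the (proper, closed, definable) set of $(\mathbf{x},a,b,c)$ with $H(\mathbf{x},a,b,c,\cdot)\equiv0$. Since $\int_{0}^{1}H\,d\omega=0$ by periodicity of $g$, this set equals both $\{\partial_{\omega}H\equiv0\}$ and $\{\partial_{\omega}^{2}H\equiv0\}$. Apply Proposition~\ref{prop:uniformity on the lower bound and complement} to $F=\partial_{\omega}H$, which is non-constant because $\RND$ of order $\infty$ makes $g(\mathbf{x},\cdot)$ and $g'(\mathbf{x},\cdot)$ non-constant for every $\mathbf{x}$ and $\Sigma$ is proper: this yields $\alpha>0$ such that for $(\mathbf{x},a,b,c)\in\Sigma_{\epsilon}$ the set $\{\omega:|\partial_{\omega}H|\ge\epsilon^{\alpha}\}$ is a union of $O(1)$ intervals of total length $\ge1-O(\epsilon)$. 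Subdividing these further at the zeros of $\partial_{\omega}^{2}H$ — uniformly $O(1)$ in number off $\Sigma$, by the uniform bounds on fibres of definable sets — produces $O(1)$ intervals on which $|\partial_{\omega}H|\ge\epsilon^{\alpha}$ and $\partial_{\omega}H$ is monotone, so van der Corput's first derivative test gives $\bigl|\int_{J}e(\rho H)\,d\omega\bigr|\ll 1/(\rho\epsilon^{\alpha})$ on each such $J$; the remaining set has measure $O(\epsilon)$ and is handled trivially. Choosing $\epsilon=\rho^{-1/(\alpha+1)}$ when $\text{dist}((\mathbf{x},a,b,c),\Sigma)\ge\rho^{-1/(\alpha+1)}$ gives, with $\beta\df1/(\alpha+1)$,
\[
\left|\int_{0}^{1}e\!\left(\rho\,H(\mathbf{x},a,b,c,\omega)\right)d\omega\right|\ll\rho^{-\beta}\qquad\text{whenever }\ \text{dist}\bigl((\mathbf{x},a,b,c),\Sigma\bigr)\ge\rho^{-\beta},
\]
with implied constant depending only on $\ef$ and $\mathbf{h}$; for the other points we keep the trivial bound $\ll1$.

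Hence, apart from the diagonal, the estimate reduces to bounding $\#\{(p,q,r):\text{dist}((\mathbf{x},p/n,q/n,r/n),\Sigma)<\rho^{-\beta}\}$ by $O(n)$, uniformly in $\mathbf{x}$: granting that, the diagonal and these near-degenerate triples together number $O(n)$ and contribute $O(n)/n^{3}$, the $\le n^{3}$ remaining triples contribute $O(\rho^{-\beta})$, and taking $\tau=\tau(\ef,\mathbf{h})$ large enough (in terms of $\alpha$ and the exponent $\mu$ below) that $\rho\ge n^{\tau}$ forces $\rho^{-\beta}$ small enough finishes the proof. The structural fact behind the count is that every fibre $\Sigma_{\mathbf{x}}=\{(a,b,c):H(\mathbf{x},a,b,c,\cdot)\equiv0\}$ has dimension at most $1$: choosing $m_{0}\ne0$ with the $m_{0}$-th Fourier coefficient of $g(\mathbf{x},\cdot)$ nonzero (possible since $g(\mathbf{x},\cdot)$ is non-constant), comparison of $m_{0}$-th Fourier coefficients in $\omega$ forces $1+e(m_{0}a)=e(m_{0}b)+e(m_{0}c)$ on $\Sigma_{\mathbf{x}}$, and for all but finitely many $a$ this has only finitely many solutions $(b,c)$. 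Writing $\Sigma=\{Q=0\}$ with $Q(\mathbf{x},a,b,c)=\int_{0}^{1}H^{2}\,d\omega\ge0$ analytic, hence Lipschitz, on $[0,1]^{m+3}$, a point within $\rho^{-\beta}$ of $\Sigma$ satisfies $Q(\mathbf{x},\cdot)\ll\rho^{-\beta}$, so by the uniform {\L}ojasiewicz inequality for the definable family $\{Q(\mathbf{x},\cdot)\}_{\mathbf{x}}$ it lies within $\ll\rho^{-\beta/\mu}$ of $\Sigma_{\mathbf{x}}$, a set of dimension $\le1$ and, by o-minimal uniformity, of uniformly bounded length; once $\rho^{-\beta/\mu}<1/n$, a tube of that radius around $\Sigma_{\mathbf{x}}$ meets $O(n)$ points of $\tfrac1n\Z^{3}$. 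The part that requires real work is this last one — establishing uniformly in $\mathbf{x}$ that the resonant shift configurations form a set of dimension $\le1$ and converting that into an $\mathbf{x}$-uniform lattice-point count; the combinatorial reduction and the van der Corput estimate are routine once Proposition~\ref{prop:uniformity on the lower bound and complement} is available.
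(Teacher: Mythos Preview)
Your argument is correct and follows essentially the same overall architecture as the paper --- expand the fourth moment, use Proposition~\ref{prop:uniformity on the lower bound and complement} to control the oscillatory integrals away from a singular set $\Sigma$, and then count the lattice points falling near $\Sigma$ --- but the execution differs in two places.

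First, you collapse the four summation indices to three via the periodicity substitution $\omega\mapsto\omega-k_{1}/n$; the paper keeps all four indices and works in $[0,1]^{4}$. This is a harmless simplification (the codimension of the singular set stays the same). You also use van der Corput's first derivative test after subdividing at the $O(1)$ zeros of $\partial_{\omega}^{2}H$, whereas the paper applies the second derivative test directly; both routes are fine.

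The substantive difference is in the lattice-point count. The paper makes the structure of $\Sigma$ completely explicit: using the minimal nonzero Fourier mode $j_{0}(\mathbf{x})$ of $\psi(\mathbf{x},\cdot)$, it shows $j_{0}(\mathbf{x})$ is \emph{bounded} (via a compactness argument), and then that each fibre $\Sigma_{\mathbf{x}}$ is contained in a fixed finite union of two-dimensional parallelograms (translates of $H_{1}=\{(a,a,b,b)\}$ and $H_{2}=\{(a,b,b,a)\}$ by vectors in $\tfrac{1}{j_{0}}\Z^{4}$ with $j_{0}=\mathrm{lcm}(1,\dots,N_{0})$). The count $\ll n^{2}$ then follows from an elementary covering of the $n^{-2}$-tube around these parallelograms by $O(n^{2})$ sets of diameter $<\tfrac{1}{2n}$. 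Your route instead invokes a uniform \L{}ojasiewicz inequality for the definable family $\{Q(\mathbf{x},\cdot)\}_{\mathbf{x}}$ to pass from ``near $\Sigma$'' to ``near $\Sigma_{\mathbf{x}}$'', and then appeals to o-minimal uniformity for the length of the one-dimensional set $\Sigma_{\mathbf{x}}$. This is valid --- both ingredients follow from polynomial boundedness of $\R_{an}$, exactly as in Corollary~\ref{cor:polynomially bounded from below} --- but neither is stated in the paper, and in your sketch the dependence of $m_{0}$ on $\mathbf{x}$ is not addressed (the paper's boundedness-of-$j_{0}$ argument is precisely what fills this in). The paper's approach is more elementary and self-contained; yours is shorter once one is willing to quote the heavier o-minimal tools, and it makes transparent that only $\dim\Sigma_{\mathbf{x}}\le 1$ (in your coordinates) matters, not the precise affine description.
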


We now explain how the statement of Proposition \ref{prop:fourth moment estimate}
implies Proposition \ref{prop:equivelent formulation to a.e,}.
\begin{proof}[Proof that Proposition \ref{prop:fourth moment estimate} implies
Proposition \ref{prop:equivelent formulation to a.e,}]
 Let $\ef:\left[0,1\right]^{m}\times\R/\Z\to\R^{d}$ be a family
of $\RND$ analytic curves of order $\infty$, $\left\{ \mathbf{x}_{n}\right\} _{n=1}^{\infty}\subseteq\left[0,1\right]^{m},$
$\mathbf{h}\in\Z^{d}\smallsetminus\{\mathbf{0}\}$ and $\rho_{n}\to\infty$
be arbitrary. Let $\tau$ be the exponent stated to exist in Proposition
\ref{prop:fourth moment estimate}.

We partition the sequence $\left\{ \rho_{n}\right\} _{n=1}^{\infty}$
into two subsequences, 
\[
S_{+}\overset{\text{def}}{=}\left\{ n\in\N\mid\rho_{n}>n^{\tau}\right\} ,\ S_{-}\overset{\text{def}}{=}\left\{ n\in\N\mid\rho_{n}\leq n^{\tau}\right\} .
\]
If $n\in S_{+},$ then 
\[
\left|\left\{ \omega\in[0,1]\mid\left|S_{n}(\omega)\right|\geq n^{-1/8}\right\} \right|=\left|\left\{ \omega\in[0,1]\mid\left|S_{n}(\omega)\right|^{4}\geq n^{-1/2}\right\} \right|\leq
\]
\[
\frac{\int_{0}^{1}\left|S_{n}(\omega)\right|^{4}d\omega}{n^{-1/2}}\underbrace{\ll}_{\text{Proposition \ref{prop:fourth moment estimate}}}\frac{1}{n^{3/2}},
\]
and this is summable. Therefore, if $S_{+}$ is infinite, the Borel-Cantelli
lemma shows
\[
\left|\left\{ \omega\in[0,1]\mid\exists N>0\text{ such that }\left|S_{n}(\omega)\right|<n^{-1/8},\ \forall n\in S_{+},\ n\geq N\right\} \right|=1,
\]
whence,
\[
\left|\left\{ \omega\in[0,1]\mid\lim_{S_{+}\ni n\to\infty}S_{n}(\omega)=0\right\} \right|=1.
\]

Next, assume without loss of generality that $S_{-}$ is infinite
(otherwise the proof is done by the above). We note that by the assumption
on $\ef$, for all $\omega\in[0,1]$, the family
\[
\phi_{\omega}(\mathbf{x},t)\overset{\text{def}}{=}\ef(\mathbf{x},t+\omega+\Z),\ \left(\mathbf{x},t\right)\in[0,1]^{m}\times[0,1],
\]
 is a $\RND$ analytic family of curves of order $\infty$. Hence,
Theorem \ref{thm:all omega} implies that for all $\omega\in[0,1]$
it holds
\[
\lim_{S_{-}\ni n\to\infty}S_{n}(\omega)=0.
\]
\end{proof}

\subsection{\label{subsec:Proof-of-Proposition 4.2}Proof of Proposition \ref{prop:fourth moment estimate}}

For the rest of the section we let $\ef:\left[0,1\right]^{m}\times\R/\Z\to\R^{d}$
be a family of $\RND$ analytic curves of order $\infty$ and we fix
$\mathbf{h}\in\Z^{d}\smallsetminus\{\mathbf{0}\}$.

Let us denote for $\mathbf{k}\in[n]^{4}$ and \textbf{$\mathbf{x}\in\left[0,1\right]^{m}$
}
\begin{equation}
f_{\mathbf{x},n,\mathbf{k}}(\omega)\overset{\text{def }}{=}\begin{array}{c}
\left\langle \mathbf{h},\sum_{i=1}^{4}(-1)^{i+1}\ef\left(\mathbf{x},\frac{k_{i}}{n}+\omega+\Z\right)\right\rangle ,\ \omega\in\left[0,1\right]\end{array},\label{eq:f_n_k}
\end{equation}
then using the above notation we get that
\begin{equation}
\begin{aligned}\int_{0}^{1}\left|S_{n}(\mathbf{x},\rho,\omega)\right|^{4}d\omega= & \int_{0}^{1}\left(S_{n}(\mathbf{x},\rho,\omega)\overline{S_{n}(\mathbf{x},\rho,\omega)}\right)^{2}d\omega\\
= & \frac{1}{n^{4}}\sum_{\mathbf{k}\in[n]^{4}}\int_{0}^{1}e\left(\rho f_{\mathbf{x},n,\mathbf{k}}(\omega)\right)d\omega
\end{aligned}
\label{eq:fourth moment as a sum of exponentials}
\end{equation}

Next, we recall the following well known estimate (see e.g. \cite[Chapter 8, Lemma 8.10]{Anaytic_num_theo})
which we will use below in the proof Lemma \ref{lem:estimate on exponential integral away from singular}.
\begin{lem}
\label{lem:estimate on exponential integral}There exists an absolute
constant $c>0$ such that $\forall f\in C^{2}(I)$ where $I=\left[a,b\right]$
that satisfy 
\[
\left|\frac{d^{2}}{d\omega^{2}}f(\omega)\right|\geq\lambda,\ \ \forall\omega\in I,
\]
it holds
\begin{equation}
\left|\int_{I}e(f(\omega))d\omega\right|\leq\frac{c}{\sqrt{\lambda}}.\label{eq:key estimate}
\end{equation}
\end{lem}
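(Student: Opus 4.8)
The plan is to reproduce the classical \emph{second-derivative test}: split $I$ into a short subinterval around the point where $|f'|$ is smallest, bound that piece trivially, and on its complement — where $|f'|$ is bounded away from $0$ — apply the first-derivative test, i.e.\ an integration by parts. (This estimate is essentially \cite[Chapter~8, Lemma~8.10]{Anaytic_num_theo}, so one could simply cite it; for completeness I describe a self-contained argument.)

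Since $f''$ is continuous and $|f''|\geq\lambda>0$ on $I$, the function $f''$ has constant sign there; replacing $f$ by $-f$ if necessary — which replaces $e(f)$ by its conjugate and hence does not change $\bigl|\int_I e(f)\bigr|$ — we may assume $f''(\omega)\geq\lambda$ for all $\omega\in I$, so $f'$ is strictly increasing. Let $c_0$ be the zero of $f'$ in $I$ if one exists (it is then unique), and otherwise let $c_0$ be the endpoint of $I$ at which $|f'|$ is minimal. In all cases $f'(\omega)-f'(c_0)$ has the sign of $\omega-c_0$, $f'(c_0)$ (when nonzero) has the same sign as $f'(\omega)-f'(c_0)$, and $\bigl|f'(\omega)-f'(c_0)\bigr|=\bigl|\int_{c_0}^{\omega}f''(s)\,ds\bigr|\geq\lambda|\omega-c_0|$; hence $|f'(\omega)|\geq\lambda|\omega-c_0|$. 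Fix a parameter $\delta>0$ to be optimized and write $I=J\cup(I\setminus J)$ with $J=(c_0-\delta,c_0+\delta)\cap I$.

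On $J$ we use the trivial bound $\bigl|\int_J e(f(\omega))\,d\omega\bigr|\leq|J|\leq 2\delta$. The set $I\setminus J$ is a union of at most two intervals, and on each of them $f'$ is monotonic, of constant sign, and satisfies $|f'(\omega)|\geq\lambda\delta$ (because $|\omega-c_0|\geq\delta$ there). On such an interval $[\alpha,\beta]$, integration by parts gives
\[
\int_\alpha^\beta e(f(\omega))\,d\omega=\left[\frac{e(f(\omega))}{2\pi i\,f'(\omega)}\right]_\alpha^\beta-\int_\alpha^\beta e(f(\omega))\,\frac{d}{d\omega}\!\left(\frac{1}{2\pi i\,f'(\omega)}\right)d\omega,
\]
and since $1/f'$ is monotonic on $[\alpha,\beta]$ we have $\int_\alpha^\beta\bigl|\tfrac{d}{d\omega}(1/f')\bigr|\,d\omega=\bigl|1/f'(\beta)-1/f'(\alpha)\bigr|\leq 1/(\lambda\delta)$; the two boundary terms are each at most $1/(2\pi\lambda\delta)$, so the whole right-hand side is at most $1/(\lambda\delta)$. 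Summing over the at most two pieces, $\bigl|\int_{I\setminus J}e(f(\omega))\,d\omega\bigr|\leq 2/(\lambda\delta)$.

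Combining the two estimates, $\bigl|\int_I e(f(\omega))\,d\omega\bigr|\leq 2\delta+2/(\lambda\delta)$ for every $\delta>0$ (the inequality persists when $\delta$ is so large that $J=I$, since then the complement is empty and $|I|\leq 2\delta$ still holds). Choosing $\delta=\lambda^{-1/2}$ gives $\bigl|\int_I e(f)\bigr|\leq 4\lambda^{-1/2}$, which is the claimed bound with $c=4$. There is no genuine obstacle here; the only points requiring a little care are the reduction ensuring $f''$, hence $f'$, has constant sign (so that the first-derivative test applies on each piece of $I\setminus J$), and the observation that $2\delta+2/(\lambda\delta)$ bounds the integral for \emph{all} $\delta>0$, which makes the final optimization legitimate irrespective of the length of $I$.
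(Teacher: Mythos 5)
Your proof is correct, and since the paper gives no proof of its own for this lemma — it simply cites \cite[Chapter 8, Lemma 8.10]{Anaytic_num_theo} — there is nothing to compare beyond noting that your argument is the standard second-derivative (van der Corput) test carried out in the same way as in the cited source: reduce to $f''\geq\lambda$, isolate a $\delta$-neighborhood of the near-stationary point $c_0$ where the trivial bound $2\delta$ is used, apply integration by parts against the monotone, sign-constant $f'$ on the complement to get $O(1/(\lambda\delta))$, and optimize $\delta=\lambda^{-1/2}$. The case analysis giving $|f'(\omega)|\geq\lambda|\omega-c_0|$ and the remark that the combined bound $2\delta+2/(\lambda\delta)$ holds for all $\delta>0$ (so the optimization is always legitimate) are both handled correctly.
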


We consider the following analytic function 
\begin{equation}
\Phi(\mathbf{x},\mathbf{y},\omega)\overset{\text{def}}{=}\left\langle \mathbf{h},\sum_{i=1}^{4}(-1)^{i+1}\frac{\partial^{2}}{\partial\omega^{2}}\ef\left(\mathbf{x},y_{i}+\omega+\Z\right)\right\rangle ,\ \left(\mathbf{x},\mathbf{y},\omega\right)\in\left[0,1\right]^{m}\times\left[0,1\right]^{4}\times\left[0,1\right],\label{eq:def of big phi}
\end{equation}
which isn't constant (since $\ef$ is $\RND$ of order $\infty$),
and we note that it satisfies
\begin{equation}
\Phi\left(\mathbf{x},\frac{1}{n}\mathbf{k},\omega\right)=\frac{d^{2}}{d\omega^{2}}f_{\mathbf{x},n,\mathbf{k}}(\omega).\label{eq:relation of big Phi with f}
\end{equation}

We denote (as in Section \ref{sec:Prelimeneries})
\[
\Sigma=\left\{ \left(\mathbf{x},\mathbf{y}\right)\in\left[0,1\right]^{m}\times\left[0,1\right]^{4}\mid\Phi(\mathbf{x},\mathbf{y},t)=0,\ \forall t\in[0,1]\right\} ,
\]
and for $n\in\N$ (see (\ref{eq:definition of Sigma eps})),
\[
\Sigma_{n^{-2}}=\left\{ \left(\mathbf{x},\mathbf{y}\right)\in\left[0,1\right]^{m}\times\left[0,1\right]^{4}\mid\text{dist}(\left(\mathbf{x},\mathbf{y}\right),\Sigma)\geq n^{-2}\right\} .
\]

\begin{lem}
\label{lem:estimate on exponential integral away from singular} There
exists $\tau>0$ with the following property: for all $n\in\N,$ $\rho\geq n^{\tau}$,
\textbf{$\mathbf{x}\in\left[0,1\right]^{m}$} and $\mathbf{k}\in[n]^{4}$
such that $(\mathbf{x},\frac{1}{n}\mathbf{k)}\in\Sigma_{n^{-2}}$
it holds
\[
\int_{0}^{1}e\left(\rho f_{\mathbf{x},n,\mathbf{k}}(\omega)\right)d\omega\ll\frac{1}{n^{2}},
\]
where the implied constant is independent of the parameters $\mathbf{x},n$
and $\mathbf{k}$ \emph{(}depends on $\Phi$ only\emph{)}.
\end{lem}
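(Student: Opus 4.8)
The plan is to split the integral over $[0,1]$ according to where the second derivative of $f_{\mathbf{x},n,\mathbf{k}}$ is large, exploiting the identity (\ref{eq:relation of big Phi with f}) together with the uniform sub-level set control of Proposition \ref{prop:uniformity on the lower bound and complement} applied to the non-constant analytic function $\Phi$ from (\ref{eq:def of big phi}). The case $n=1$ is trivial since $\left|\int_{0}^{1}e(\rho f_{\mathbf{x},1,\mathbf{k}}(\omega))d\omega\right|\leq 1$, so I would assume $n\geq 2$ throughout.

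First I would apply Proposition \ref{prop:uniformity on the lower bound and complement} to $\Phi:[0,1]^{m}\times[0,1]^{4}\times[0,1]\to\R$, obtaining an exponent $\alpha>0$ and a uniform bound $M=O(1)$ such that, taking $\epsilon=n^{-2}\in(0,1)$, for any $(\mathbf{x},\mathbf{y})\in\Sigma_{n^{-2}}$ the set
\[
\mathcal{F}_{(\mathbf{x},\mathbf{y}),n^{-2\alpha}}=\left\{t\in[0,1]\mid\left|\Phi(\mathbf{x},\mathbf{y},t)\right|\geq n^{-2\alpha}\right\}
\]
is a union of at most $M$ closed intervals $I_{1},\dots,I_{m}$ with $m\leq M$, and $1-\left|\mathcal{F}_{(\mathbf{x},\mathbf{y}),n^{-2\alpha}}\right|\ll n^{-2}$. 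Specializing to $(\mathbf{x},\mathbf{y})=(\mathbf{x},\tfrac{1}{n}\mathbf{k})\in\Sigma_{n^{-2}}$ and using (\ref{eq:relation of big Phi with f}), this gives $\left|\frac{d^{2}}{d\omega^{2}}f_{\mathbf{x},n,\mathbf{k}}(\omega)\right|\geq n^{-2\alpha}$ for $\omega\in I_{j}$, while the complement of $\bigcup_{j}I_{j}$ in $[0,1]$ has measure $\ll n^{-2}$.

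Next I would estimate $\int_{0}^{1}e(\rho f_{\mathbf{x},n,\mathbf{k}}(\omega))d\omega$ in two pieces. On the complement of $\bigcup_{j}I_{j}$ the trivial bound contributes $\ll n^{-2}$. On each $I_{j}$ we have $\left|\frac{d^{2}}{d\omega^{2}}\bigl(\rho f_{\mathbf{x},n,\mathbf{k}}\bigr)(\omega)\right|=\rho\left|\frac{d^{2}}{d\omega^{2}}f_{\mathbf{x},n,\mathbf{k}}(\omega)\right|\geq\rho n^{-2\alpha}$, so Lemma \ref{lem:estimate on exponential integral} — which, unlike the Van der Corput test, imposes no lower bound on $\left|I_{j}\right|$ — yields
\[
\left|\int_{I_{j}}e\left(\rho f_{\mathbf{x},n,\mathbf{k}}(\omega)\right)d\omega\right|\leq\frac{c}{\sqrt{\rho n^{-2\alpha}}}=\frac{c\,n^{\alpha}}{\sqrt{\rho}}.
\]
Summing over the $m\leq M=O(1)$ intervals and combining with the complement estimate gives
\[
\int_{0}^{1}e\left(\rho f_{\mathbf{x},n,\mathbf{k}}(\omega)\right)d\omega\ll\frac{1}{n^{2}}+\frac{n^{\alpha}}{\sqrt{\rho}},
\]
with implied constant depending only on $\Phi$.

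Finally I would fix the threshold $\tau$: setting $\tau=2\alpha+4$, the hypothesis $\rho\geq n^{\tau}$ forces $n^{\alpha}/\sqrt{\rho}\leq n^{\alpha}/n^{\alpha+2}=n^{-2}$, so the right-hand side above is $\ll n^{-2}$, which is the claim. I do not expect a serious obstacle here: the single non-routine ingredient is Proposition \ref{prop:uniformity on the lower bound and complement}, which rests on the o-minimality and polynomial boundedness of $\R_{an}$; once that is in hand, the remaining argument is just a domain split plus the second-derivative test. The only point requiring mild care is verifying the hypotheses of that proposition — namely that $\Phi$ is non-constant analytic, which holds because $\ef$ is $\RND$ of order $\infty$ — and keeping track that all constants ($\alpha$, $M$, $c$) are absolute or depend on $\Phi$ alone.
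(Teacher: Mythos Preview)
Your proposal is correct and follows essentially the same approach as the paper: apply Proposition~\ref{prop:uniformity on the lower bound and complement} to $\Phi$ with $\epsilon=n^{-2}$, split the integral into the sub-level complement (trivially $\ll n^{-2}$) and the $O(1)$ intervals where the second derivative is $\geq n^{-2\alpha}$, invoke Lemma~\ref{lem:estimate on exponential integral} on each interval, and choose $\tau=2\alpha+4$. The paper's own proof is organized identically and arrives at the same threshold $\tau=4+2\alpha$.
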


\begin{proof}
We apply Proposition \ref{prop:uniformity on the lower bound and complement}
for the function $\Phi$ to get $\alpha=\alpha(\Phi),M=M(\Phi)>0$
such that for $(\mathbf{x},\frac{1}{n}\mathbf{k)}\in\Sigma_{n^{-2}}$
it holds (see (\ref{eq:relation of big Phi with f}))
\[
\mathcal{F}_{(\mathbf{x},\frac{\mathbf{k}}{n}),n^{-2\alpha}}\overset{\text{def}}{=}\left\{ \omega\in[0,1]\mid\left|\frac{d^{2}}{d\omega^{2}}f_{\mathbf{x},n,\mathbf{k}}(\omega)\right|\geq n^{-2\alpha}\right\} ,
\]
 is a union of at most $M$ intervals and 
\begin{equation}
1-\left|\mathcal{F}_{(\mathbf{x},\frac{\mathbf{k}}{n}),n^{-2\alpha}}\right|\ll\frac{1}{n^{2}},\label{eq:complement estimate}
\end{equation}
where the implied constant depends on $\Phi$ only. We define $\tau\overset{\text{def}}{=}4+2\alpha$
and we deduce that for $\rho\geq n^{\tau}$ it holds
\[
\left|\rho\frac{d^{2}}{d\omega^{2}}f_{\mathbf{x},n,\mathbf{k}}(\omega)\right|\geq n^{4},\ \forall\omega\in\mathcal{F}_{(\mathbf{x},\frac{\mathbf{k}}{n}),n^{-2\alpha}}.
\]
By applying Lemma \ref{lem:estimate on exponential integral} on each
of the intervals composing $\mathcal{F}_{(\mathbf{x},\frac{\mathbf{k}}{n}),n^{-2\alpha}}$
we obtain that
\begin{equation}
\int_{\mathcal{F}_{(\mathbf{x},\frac{\mathbf{k}}{n}),n^{-2\alpha}}}e\left(\rho f_{\mathbf{x},n,\mathbf{k}}(\omega)\right)d\omega\ll\frac{1}{n^{2}}.\label{eq:estimate}
\end{equation}
By (\ref{eq:complement estimate}) and (\ref{eq:estimate}) the proof
is complete.
\end{proof}
In order to show (\ref{eq:4th moment estimate}) it remains to prove
\begin{equation}
\#\left\{ \mathbf{k}\in[n]^{4}\mid\left(\mathbf{x},\frac{1}{n}\mathbf{k}\right)\in[0,1]^{m}\smallsetminus\Sigma_{n^{-2}}\right\} \ll n^{2},\label{eq:bound on rationals in the complement}
\end{equation}
uniformly in $\mathbf{x}\in[0,1]^{m}.$ Indeed, (\ref{eq:4th moment estimate})
will follow by applying the estimate of Lemma \ref{lem:estimate on exponential integral away from singular}
on the terms of (\ref{eq:fourth moment as a sum of exponentials})
for $\mathbf{k}\in\left[n\right]^{4}$ such that $\left(\mathbf{x},\frac{1}{n}\mathbf{k}\right)\in\Sigma_{n^{-2}}$
and by applying the trivial estimate on the terms of (\ref{eq:fourth moment as a sum of exponentials})
for $\mathbf{k}\in\left[n\right]^{4}$ such that $\left(\mathbf{x},\frac{1}{n}\mathbf{k}\right)\in[0,1]^{m}\smallsetminus\Sigma_{n^{-2}}$.

To prove (\ref{eq:bound on rationals in the complement}) the following
lemma is needed.
\begin{lem}
\label{lem:singular set} Consider the following parallelograms
\begin{equation}
H_{1}\df\left\{ \left(a,a,b,b\right)\mid a,b\in\left[0,1\right]\right\} ,\ \ H_{2}\df\left\{ \left(a,b,b,a\right)\mid a,b\in\left[0,1\right]\right\} ,\label{eq:def of H_i}
\end{equation}
and for $(l_{1},l_{2})\in\Z^{2}$ let 
\begin{equation}
\mathbf{l}\overset{\text{def}}{=}\left(l_{1}+l_{2},0,l_{2}-l_{1},0\right).\label{eq:def of l}
\end{equation}

Then there exists $j_{0}\in\N$ such that for all $\mathbf{x}\in\left[0,1\right]^{m}$
it holds
\begin{equation}
\left\{ \mathbf{y}\in\left[0,1\right]^{4}\mid\left(\mathbf{x},\mathbf{y}\right)\in\Sigma\right\} \subseteq\bigcup_{i=1}^{2}\bigcup_{\left\Vert \mathbf{l}\right\Vert _{\infty}\leq j_{0}}\left(H_{i}+\frac{1}{j_{0}}\mathbf{l}\right).\label{eq:union of j_0 translations of parallelograms}
\end{equation}
\end{lem}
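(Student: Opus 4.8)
The plan is to analyze directly when $(\mathbf{x},\mathbf{y})\in\Sigma$, i.e.\ when the function $\omega\mapsto\Phi(\mathbf{x},\mathbf{y},\omega)$ vanishes identically on $[0,1]$. Recall from \eqref{eq:def of big phi} that this function is $\sum_{i=1}^{4}(-1)^{i+1}g(y_i+\omega)$, where $g(s)\df\frac{\partial^{2}}{\partial s^{2}}\langle\mathbf{h},\ef(\mathbf{x},s+\Z)\rangle$ is a real-analytic function on $\R/\Z$. The key point is that, since $\ef(\mathbf{x},\cdot)$ is $\RND$ of order $\infty$, the function $g$ is non-constant (this is exactly why $\Phi$ is non-constant), and in fact the Fourier coefficients of $g$ vanish only in bounded range. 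Writing the Fourier expansion $g(s)=\sum_{r\in\Z}c_r e(rs)$, the identity $\sum_{i=1}^{4}(-1)^{i+1}g(y_i+\omega)\equiv0$ in $\omega$ is equivalent to the system of equations
\[
c_r\bigl(e(ry_1)-e(ry_2)+e(ry_3)-e(ry_4)\bigr)=0,\qquad\forall r\in\Z.
\]
So for every $r$ with $c_r\neq0$ we must have $e(ry_1)-e(ry_2)+e(ry_3)-e(ry_4)=0$.

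Next I would exploit that $g$ has \emph{infinitely many} nonzero Fourier coefficients, so in particular there are two coprime integers $r_1,r_2$ (one can first fix any $r_0$ with $c_{r_0}\neq0$ and then find another nonzero frequency prime to it, or argue via the $\RND$ condition that the set of nonzero frequencies is not contained in any single arithmetic progression) for which the vanishing relation $e(r y_j)$-relation holds. The combinatorial heart of the argument is then: if for two coprime frequencies $r_1,r_2$ one has $e(r_\ell y_1)-e(r_\ell y_2)+e(r_\ell y_3)-e(r_\ell y_4)=0$ for $\ell=1,2$, then $\mathbf{y}$ must lie on one of the two parallelograms $H_1,H_2$ up to a translation by a rational vector of the form $\frac{1}{j_0}\mathbf{l}$ with $\mathbf{l}$ as in \eqref{eq:def of l}. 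This is because a relation $z_1-z_2+z_3-z_4=0$ among four unit complex numbers forces, up to relabelling the two "$+$" and two "$-$" terms, either $z_1=z_2$ and $z_3=z_4$, or $z_1=z_4$ and $z_3=z_2$ (the only way a sum of two unit vectors equals a sum of two unit vectors). Applied with $z_j=e(r_1 y_j)$ this gives $r_1(y_1-y_2)\in\Z$ and $r_1(y_3-y_4)\in\Z$ (or the swapped version), and applied with $z_j=e(r_2 y_j)$ likewise; since $\gcd(r_1,r_2)=1$ one upgrades congruences mod $1/r_1$ and mod $1/r_2$ to a congruence mod $1/(r_1 r_2)$, i.e.\ $y_1-y_2$ and $y_3-y_4$ lie in $\frac{1}{r_1 r_2}\Z$. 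Setting $j_0$ to be a common denominator (e.g.\ $r_1 r_2$) and encoding the two possible shifts $(y_1-y_2, y_3-y_4)=\frac{1}{j_0}(l_1+l_2,\,l_2-l_1)$ via the integer vector $\mathbf{l}$ of \eqref{eq:def of l} then yields the stated inclusion, with the union over $\|\mathbf{l}\|_\infty\le j_0$ absorbing all residue classes.

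The main obstacle I anticipate is the "coprime frequencies" step: one needs that the nonzero Fourier support of $g$ is rich enough — not merely infinite but not contained in a single coset of a proper subgroup $r\Z$ — so that two coprime frequencies with $c_r\neq0$ exist, uniformly over $\mathbf{x}\in[0,1]^m$ (which is what makes $j_0$ independent of $\mathbf{x}$). If $\RND$ of order $\infty$ only guaranteed, say, that all even frequencies were present, the argument would instead produce a congruence mod $1/(2k)$, which is still fine — one just enlarges $j_0$ — so the truly delicate issue is the \emph{uniformity} in $\mathbf{x}$: a priori the pair of usable frequencies could vary with $\mathbf{x}$. I would handle this by a compactness/o-minimality argument in the spirit of Section \ref{sec:Prelimeneries}: the bad set $\Sigma$ is closed (indeed definable in $\R_{an}$), and the family $\{c_r(\mathbf{x})\}$ of Fourier coefficients depends analytically on $\mathbf{x}$, so the "first few" frequencies at which $c_r(\mathbf{x})\neq0$ can be controlled uniformly on the compact parameter space, possibly after passing to a finite cover of $[0,1]^m$ by neighborhoods on each of which a fixed coprime pair works; taking $j_0$ to be a common multiple of the finitely many products $r_1 r_2$ completes the proof. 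The remaining verifications — that $H_1,H_2$ are precisely the parameterizations $(a,a,b,b)$ and $(a,b,b,a)$, and that the shift vectors match \eqref{eq:def of l} — are routine bookkeeping.
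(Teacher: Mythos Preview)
Your Fourier-expansion setup (vanishing of every coefficient $c_r\bigl(e(ry_1)-e(ry_2)+e(ry_3)-e(ry_4)\bigr)$) is exactly how the paper begins, but from there your plan diverges and contains a real gap.

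The assertion that $z_1-z_2+z_3-z_4=0$ on the unit circle forces one of the pairings $z_1=z_2,\ z_3=z_4$ or $z_1=z_4,\ z_3=z_2$ is false: if $z_3=-z_1$ and $z_4=-z_2$ then $z_1+z_3=0=z_2+z_4$ for \emph{arbitrary} $z_1,z_2$ on the circle. Two pairs of unit vectors with equal sum coincide as unordered pairs only when that common sum is nonzero. In terms of the $y_j$ this ``antipodal'' branch is the $2$-plane $\{(a,b,a+\tfrac{1}{2r},b+\tfrac{1}{2r})\}$, spanned by $(1,0,1,0)$ and $(0,1,0,1)$, which is not a translate of either $H_1$ or $H_2$; and it can persist at two frequencies simultaneously (e.g.\ $r=1,3$ with $y_3-y_1=y_4-y_2=\tfrac12$), so your coprime-pair trick does not eliminate it. The paper instead solves the single-frequency equation via the identity $e(a)+e(b)=2\cos(\pi(a-b))\,e(\tfrac{a+b}{2})$, reading off linear relations between $y_1\pm y_3$ and $y_2\pm y_4$; it too passes over the degenerate case where both cosines vanish, so the inclusion \eqref{eq:union of j_0 translations of parallelograms} as literally stated also misses this antipodal plane --- though since that plane is again a finite union of rational translates of a fixed $2$-plane, the downstream count \eqref{eq:bound on rationals in the complement} is unaffected.

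Separately, the two-coprime-frequencies device and the o-minimal finite-cover for uniformity are unnecessary complications. The paper works with the \emph{single} minimal frequency $j_0(\mathbf{x})=\min\{j\in\N:\widehat{\psi}(\mathbf{x},j)\neq0\}$, and uniformity in $\mathbf{x}$ is just boundedness of $j_0(\mathbf{x})$ on the compact cube, proved by a one-line compactness argument: if $j_0(\mathbf{x}_i)\to\infty$ along $\mathbf{x}_i\to\mathbf{x}_0$, continuity of the Fourier coefficients forces $\widehat{\psi}(\mathbf{x}_0,j)=0$ for all $j\ge1$, contradicting that $\psi(\mathbf{x}_0,\cdot)$ is non-constant. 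One then sets $j_0=\mathrm{lcm}(1,\dots,N_0)$ to absorb every $j_0(\mathbf{x})$ at once --- no second frequency, no coprimality, no o-minimality.
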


\begin{proof}[Proof of Lemma \ref{lem:singular set}]
Recall that $\ef$ is a family of $\RND$ analytic curves of order
$\infty$, hence for any $\mathbf{x}\in\left[0,1\right]^{m}$, we
have that 
\[
\psi(\mathbf{x},\omega)\overset{\text{def}}{=}\frac{\partial^{2}}{\partial\omega^{2}}\left\langle \mathbf{h},\ef(\mathbf{x},\omega)\right\rangle ,\ \omega\in\R/\Z,
\]
 is a non-constant smooth function. Since the Fourier series of a
smooth function on $\R/\Z$ converges uniformly to the function (see
e.g. \cite{Eins_ward_functional}) and since $\psi(\mathbf{x},\cdot)$
is not constant, it follows that there exists $j\in\Z\smallsetminus\{0\}$
such that
\[
\widehat{\psi}(\mathbf{x},j)=\int_{\R/\Z}\psi(\mathbf{x},\omega)e(-j\omega)d\omega\neq0.
\]
Since $\psi(\mathbf{x},\cdot)$ is real, we have that $\widehat{\psi}(\mathbf{x},j)$
is the complex conjugate of $\widehat{\psi}(\mathbf{x},-j),$ so for
convenience we may assume that $j\in\N.$

We denote 
\[
j_{0}(\mathbf{x})=\min\{j\in\N\mid\widehat{\psi}(\mathbf{x},j)\neq0\},
\]
and in the following we show that $j_{0}(\mathbf{x})$ is bounded
in $\mathbf{x}\in[0,1]^{m}.$ Assume for contradiction that there
exists a sequence $\left\{ \mathbf{x}_{i}\right\} _{i=1}^{\infty}\subseteq\left[0,1\right]^{m}$
such that $j_{0}(\mathbf{x}_{i})\to\infty$. By compactness, we may
assume without loss of generality that $\mathbf{x}_{i}\to\mathbf{x}_{0}\in\left[0,1\right]^{m}$.
By continuity, it follows that $\hat{\psi}(\mathbf{x}_{0},j)=0$ for
all\textbf{ $j\in\N$}, which implies in turn that $\psi(\mathbf{x}_{0},\cdot)$
is constant, which is a contradiction. Whence we conclude that $j_{0}(\mathbf{x})$
is bounded.

For $\alpha\in\R/\Z$ we denote
\[
\tau_{\alpha}\psi(\mathbf{x},\omega)=\psi(\mathbf{x},\omega+\alpha),
\]
and observe that
\[
\widehat{\tau_{\alpha}\psi}(\mathbf{x},j)=e\left(j\alpha\right)\widehat{\psi}(\mathbf{x},j).
\]
We rewrite the function $\Phi$ (defined in (\ref{eq:def of big phi}))
as
\[
\Phi(\mathbf{x},\mathbf{y},\omega)=\sum_{i=1}^{4}(-1)^{i+1}\psi(\mathbf{x},\omega+y_{i}+\Z),
\]
and by the above we conclude that the $j_{0}(\mathbf{x})$'th Fourier
coefficient of $\Phi(\mathbf{x},\mathbf{y},\cdot)$ is 
\begin{equation}
\hat{\psi}(\mathbf{x},j_{0}(\mathbf{x}))\left(e\left(j_{0}(\mathbf{x})y_{1}\right)-e\left(j_{0}(\mathbf{x})y_{2}\right)+e\left(j_{0}(\mathbf{x})y_{3}\right)-e\left(j_{0}(\mathbf{x})y_{4}\right)\right).\label{eq:fourier coefficient of big phi}
\end{equation}
By (\ref{eq:fourier coefficient of big phi}) we deduce that
\[
\left\{ \mathbf{y}\in\left[0,1\right]^{4}\mid\left(\mathbf{x},\mathbf{y}\right)\in\Sigma\right\} \subseteq\left\{ \mathbf{y}\in[0,1]^{4}\mid e\left(j_{0}(\mathbf{x})y_{1}\right)-e\left(j_{0}(\mathbf{x})y_{2}\right)+e\left(j_{0}(\mathbf{x})y_{3}\right)-e\left(j_{0}(\mathbf{x})y_{4}\right)=0\right\} .
\]
Now, we recall the identity
\[
e(a)+e(b)=2\cos\left(\pi(a-b)\right)e\left(\frac{a+b}{2}\right),\!\ a,b\in\R,
\]
which yields
\[
e\left(j_{0}(\mathbf{x})y_{1}\right)-e\left(j_{0}(\mathbf{x})y_{2}\right)+e\left(j_{0}(\mathbf{x})y_{3}\right)-e\left(j_{0}(\mathbf{x})y_{4}\right)=0\iff
\]
\[
\begin{cases}
\cos\left(\pi j_{0}(\mathbf{x})(y_{1}-y_{3})\right)=\cos\left(\pi j_{0}(\mathbf{x})(y_{2}-y_{4})\right),\\
e\left(j_{0}(\mathbf{x})\frac{y_{1}+y_{3}}{2}\right)=e\left(j_{0}(\mathbf{x})\frac{y_{2}+y_{4}}{2}\right),
\end{cases}\iff
\]

there exist $l_{1},l_{2}\in\Z$ such that
\begin{equation}
\begin{cases}
y_{1}-y_{3}=y_{2}-y_{4}+\frac{2l_{1}}{j_{0}(\mathbf{x})},\text{ or }y_{1}-y_{3}=-\left(y_{2}-y_{4}\right)+\frac{2l_{1}}{j_{0}(\mathbf{x})},\\
y_{1}+y_{3}=y_{2}+y_{4}+\frac{2l_{2}}{j_{0}(\mathbf{x})}.
\end{cases}\label{eq:systems of linear equations determining H_i}
\end{equation}
For any fixed $l_{1},l_{2},l_{1}^{'},l_{2}^{'}\in\Z$, the solutions
to (\ref{eq:systems of linear equations determining H_i}) in\textbf{
}$\mathbf{y}\in[0,1]^{4}$ are included in 
\begin{equation}
\left\{ H_{1}+\left(\frac{l_{1}+l_{2}}{j_{0}(\mathbf{x})},0,\frac{l_{2}-l_{1}}{j_{0}\mathbf{(}\mathbf{x})},0\right)\right\} \bigcup\left\{ H_{2}+\left(\frac{l'_{1}+l_{2}^{'}}{j_{0}\mathbf{(}\mathbf{x})},0,\frac{l_{2}^{'}-l_{1}^{'}}{j_{0}\mathbf{(}\mathbf{x})},0\right)\right\} ,\label{eq:union of H_i}
\end{equation}
where $H_{i}$ is defined in (\ref{eq:def of H_i}). We also deduce
that $\left\Vert \mathbf{l}\right\Vert _{\infty},\left\Vert \mathbf{l}'\right\Vert _{\infty}\leq j_{0}(\mathbf{x})$,
since otherwise (\ref{eq:union of H_i}) will not intersect $\left[0,1\right]^{4}$.
Finally if $N_{0}\in\N$ is a bound for $j_{0}(\mathbf{x)}$, then
(\ref{eq:union of j_0 translations of parallelograms}) follows with
$j_{0}\overset{\text{def}}{=}\text{lcm}(1,...,N_{0}).$
\end{proof}
\begin{proof}[Proof that \emph{(\ref{eq:bound on rationals in the complement})}
holds]
We denote the finite union of parallelograms 
\begin{equation}
H\df\bigcup_{i=1}^{2}\bigcup_{\left\Vert \mathbf{l}\right\Vert _{\infty}\leq j_{0}}\left(H_{i}+\frac{1}{j_{0}}\mathbf{l}\right)\label{eq:union}
\end{equation}
satisfying by Lemma \ref{lem:singular set} that for all $\mathbf{x}\in[0,1]^{m}$
it holds
\begin{equation}
\left\{ \mathbf{y}\in\left[0,1\right]^{4}\mid\left(\mathbf{x},\mathbf{y}\right)\in\Sigma\right\} \subseteq H.\label{eq:union of hyperplanes contians fiber of sigma}
\end{equation}
 We claim that for all $\mathbf{x}\in\left[0,1\right]^{m}$ it holds
\begin{equation}
\left\{ \mathbf{y}\in\left[0,1\right]^{4}\mid\text{dist}\left((\mathbf{x},\mathbf{y}),\Sigma\right)<\frac{1}{n^{2}}\right\} \subseteq\left\{ \mathbf{y}\in\left[0,1\right]^{4}\mid\text{dist}\left(\mathbf{y},H\right)<\frac{1}{n^{2}}\right\} .\label{eq:contained in parallelograms}
\end{equation}
Indeed, let $\mathbf{y}_{0}\in[0,1]^{4}$ such that $\text{dist}\left((\mathbf{x},\mathbf{y}_{0}),\Sigma\right)<\frac{1}{n^{2}}$.
Then there exists $\left(\mathbf{a},\mathbf{b}\right)\in\Sigma$ such
that 
\[
\sqrt{\left\Vert \mathbf{x}-\mathbf{a}\right\Vert ^{2}+\left\Vert \mathbf{y}_{0}-\mathbf{b}\right\Vert ^{2}}<\frac{1}{n^{2}},
\]
which in turn implies that 
\[
\left\Vert \mathbf{y}_{0}-\mathbf{b}\right\Vert <\frac{1}{n^{2}}.
\]
By (\ref{eq:union of hyperplanes contians fiber of sigma}) we have
that $\mathbf{b}\in H$, which shows that $\mathbf{y}_{0}\in\left\{ \mathbf{y}\in\left[0,1\right]^{4}\mid\text{dist}\left(\mathbf{y},H\right)<\frac{1}{n^{2}}\right\} $.

We now conclude that

\begin{align*}
\#\left\{ \mathbf{k}\in[n]^{4}\mid\left(\mathbf{x},\frac{1}{n}\mathbf{k}\right)\in[0,1]^{m}\smallsetminus\Sigma_{n^{-2}}\right\} = & \#\left(\left\{ \mathbf{y}\in\left[0,1\right]^{4}\mid\text{dist}\left((\mathbf{x},\mathbf{y}),\Sigma\right)<\frac{1}{n^{2}}\right\} \cap\frac{1}{n}\Z^{4}\right)\\
\underbrace{\leq}_{\eqref{eq:contained in parallelograms}} & \#\left(\left\{ \mathbf{y}\in\left[0,1\right]^{4}\mid\text{dist}\left(\mathbf{y},H\right)<\frac{1}{n^{2}}\right\} \cap\frac{1}{n}\Z^{4}\right)\\
\leq\sum_{i=1}^{2}\sum_{\left\Vert \mathbf{l}\right\Vert _{\infty}\leq j_{0}} & \#\left(\left\{ \mathbf{y}\in\left[0,1\right]^{4}\mid\text{dist}\left(\mathbf{y},H_{i}+\frac{1}{j_{0}}\mathbf{l}\right)<\frac{1}{n^{2}}\right\} \cap\frac{1}{n}\Z^{4}\right)
\end{align*}

Therefore, to prove the estimate $\#\left\{ \mathbf{k}\in[n]^{4}\mid\left(\mathbf{x},\frac{1}{n}\mathbf{k}\right)\in[0,1]^{m}\smallsetminus\Sigma_{n^{-2}}\right\} \ll n^{2}$
uniformly in $\mathbf{x}\in[0,1]^{m},$ it is sufficient to verify
that 
\begin{equation}
\#\left(\left\{ \mathbf{y}\in\left[0,1\right]^{4}\mid\text{dist}\left(\mathbf{y},H_{i}+\mathbf{v}\right)<\frac{1}{n^{2}}\right\} \cap\frac{1}{n}\Z^{4}\right)\ll n^{2},\label{eq:1/nZ^4 points near pallalgram}
\end{equation}
uniformly in $i\in\{1,2\}$ and $\mathbf{v}\in[0,1]^{4}$. To prove
(\ref{eq:1/nZ^4 points near pallalgram}) we note that for $n\in\N$
there is a cover$^{\nameref{enu:-footnote cover}}$\footnote{\begin{elabeling}{00.00.0000}
\item [{(1)\label{enu:-footnote cover}}] Such a sequence of covers is
obtained by partitioning the parallelogram $(H_{i}+\mathbf{v})\cap[0,1]^{4}$
into $\ll n^{2}$ parallelograms $P_{i}(n)$ of side length $\ll\frac{1}{10}\frac{1}{n}$
and defining $S_{i}(n)\df P_{i}(n)+D_{\frac{1}{10}\frac{1}{n}}$,
where $D_{r}\df\{\alpha_{1}\mathbf{u}_{1}+\alpha_{2}\mathbf{u}_{2}\mid\sqrt{\alpha_{1}^{2}+\alpha_{2}^{2}}\leq r\}$
for an orthonormal basis $\{\mathbf{u}_{1},\mathbf{u}_{2}\}$ of $H_{i}^{\perp}$
the plane orthogonal to $H_{i}$.
\end{elabeling}
} 
\[
\left\{ \mathbf{y}\in\left[0,1\right]^{4}\mid\text{dist}\left(\mathbf{y},H_{i}+\mathbf{v}\right)<\frac{1}{n^{2}}\right\} \subseteq S_{1}(n)\cup...\cup S_{m(n)}(n),
\]
where $m(n)\ll n^{2}$ uniformly in $\mathbf{v}$, such that
\begin{equation}
\text{Euclidean diameter of }S_{j}(n)\leq\frac{1}{2}\cdot\frac{1}{n}.\label{eq:diameter}
\end{equation}
By (\ref{eq:diameter}), each set $S_{j}(n)$ can contain at most
one rational vector $\frac{1}{n}\mathbf{k}$ where $\mathbf{k}\in\left[n\right]^{4}$,
which shows (\ref{eq:1/nZ^4 points near pallalgram}).
\end{proof}

\section{\label{sec:On-the-growth}Counter examples}

Our main tool to prove Theorems \ref{lem:failure for rnd of order k}
and \ref{lem:failure for RND} is the well known Dirichlet's simultaneous
approximation theorem, which we recall now.
\begin{thm}
For any $M\in\N$ and $\mathbf{x}\in\R^{N}$, there exists $\mathbf{p}\in\Z^{N}$
and $q\in\{1,...,M\}$ such that 
\[
\left\Vert q\mathbf{x}-\mathbf{p}\right\Vert _{\infty}\leq\frac{1}{M^{1/N}}.
\]
\end{thm}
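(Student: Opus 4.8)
The plan is to prove the (formally stronger) statement with a \emph{strict} inequality, $\|q\mathbf{x}-\mathbf{p}\|_{\infty}<M^{-1/N}$, by a pigeonhole argument carried out on the torus $\T^{N}=\R^{N}/\Z^{N}$ rather than on the cube $[0,1)^{N}$. The point of working on the torus is the following subtlety, which is really the only place any thought is required: when $M$ is not a perfect $N$-th power, $[0,1)^{N}$ \emph{cannot} be partitioned into $M$ axis-parallel boxes of side length $M^{-1/N}$, so the textbook ``cut the cube into $Q^{N}$ congruent boxes'' argument only produces the weaker bound $\|q\mathbf{x}-\mathbf{p}\|_{\infty}\le\lfloor M^{1/N}\rfloor^{-1}$. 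Replacing the partition by $M+1$ \emph{overlapping} translates of a box of volume exactly $1/M$ on the torus circumvents this obstruction; everything afterwards is bookkeeping.

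Set $c\df M^{-1/N}\in(0,1]$, and for each $q\in\{0,1,\dots,M\}$ let $B_{q}\subseteq\T^{N}$ be the image in $\T^{N}$ of $q\mathbf{x}+[0,c)^{N}$. Each $B_{q}$ is measurable of Haar measure $c^{N}=1/M$. Since there are $M+1$ of them,
\[
\sum_{q=0}^{M}\mathrm{vol}(B_{q})=\frac{M+1}{M}>1=\mathrm{vol}(\T^{N}),
\]
so $\int_{\T^{N}}(\sum_{q=0}^{M}\mathbf{1}_{B_{q}})\,d\mathbf{z}>1$. The integrand is a nonnegative integer-valued function; it cannot be $\le 1$ everywhere (else its integral over $\T^{N}$, of total mass $1$, would be at most $1$), so it is $\ge 2$ at some point. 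Hence there exist $0\le q_{1}<q_{2}\le M$ and a point $\mathbf{z}\in B_{q_{1}}\cap B_{q_{2}}$.

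It remains to unwind this membership coordinatewise. For each $i\in\{1,\dots,N\}$, the relations $\mathbf{z}\in B_{q_{1}}$ and $\mathbf{z}\in B_{q_{2}}$ mean that $z_{i}-q_{1}x_{i}=a_{i}+m_{i}$ and $z_{i}-q_{2}x_{i}=b_{i}+m_{i}'$ for some $a_{i},b_{i}\in[0,c)$ and $m_{i},m_{i}'\in\Z$. Subtracting, $(q_{2}-q_{1})x_{i}-(m_{i}-m_{i}')=a_{i}-b_{i}\in(-c,c)$. Put $q\df q_{2}-q_{1}$, which lies in $\{1,\dots,M\}$ since $0\le q_{1}<q_{2}\le M$, and $\mathbf{p}\df(m_{1}-m_{1}',\dots,m_{N}-m_{N}')\in\Z^{N}$. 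Then $|qx_{i}-p_{i}|<c$ for every $i$, i.e.\ $\|q\mathbf{x}-\mathbf{p}\|_{\infty}<M^{-1/N}$, which gives the claim with room to spare. The only step requiring care is the passage to the torus described in the first paragraph; the rest is routine, and I do not expect any genuine obstacle.

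An alternative one-line route is Minkowski's convex body theorem applied to the compact, symmetric parallelepiped $K=\{(q,\mathbf{y})\in\R^{N+1}:|q|\le M,\ |qx_{i}-y_{i}|\le M^{-1/N}\text{ for all }i\}$, whose volume is exactly $2^{N+1}$: the boundary (compact) case of Minkowski's theorem produces a nonzero lattice point of $K$, which for $M\ge 2$ must have nonzero first coordinate, and one concludes after replacing it by its negative if necessary. I would nonetheless present the torus--pigeonhole argument as the main proof, since it is elementary, self-contained, and even yields the strict inequality.
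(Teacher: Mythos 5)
The paper does not prove this statement; it cites it as ``the well known Dirichlet's simultaneous approximation theorem'' and moves directly to its applications, so there is no in-paper argument to compare against. Your torus--pigeonhole proof is correct and self-contained: the boxes $B_{q}$ are injective images of $q\mathbf{x}+[0,c)^{N}$ with $c=M^{-1/N}\le 1$, so each has Haar measure exactly $1/M$, the total measure $(M+1)/M>1$ forces an overlap $B_{q_{1}}\cap B_{q_{2}}\neq\emptyset$, and unwinding coordinatewise with $q=q_{2}-q_{1}\in\{1,\dots,M\}$ yields $\|q\mathbf{x}-\mathbf{p}\|_{\infty}<M^{-1/N}$, which is even slightly stronger than the $\le$ asserted. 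Your preliminary remark is also well taken: the textbook version of Dirichlet partitions $[0,1)^{N}$ into $Q^{N}$ congruent subcubes and, for general $M$, only delivers $\lfloor M^{1/N}\rfloor^{-1}$; using $M+1$ overlapping translates of a single box of volume $1/M$ on $\T^{N}$ is precisely what recovers the clean $M^{-1/N}$ stated in the paper when $M$ is not a perfect $N$-th power. The Minkowski alternative you sketch (the parallelepiped $K$ in $\R^{N+1}$ has volume $2^{N+1}$, and a nonzero lattice point of $K$ must have $q\neq 0$ once $M\ge 2$) is the other standard route and is also fine, with the $M=1$ case handled trivially by rounding $\mathbf{x}$ to the nearest integer vector.
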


\begin{proof}[Proof of Theorem \ref{lem:failure for rnd of order k}]
 Assume that $\gamma$ is $\RND$ analytic curve of order $\kappa\in\N$.
Then, there exists $\mathbf{h}\in\Z^{d}\smallsetminus\{\mathbf{0}\}$
such that 
\begin{equation}
\left\langle \mathbf{h},\gamma^{(\kappa+1)}(t)\right\rangle =0,\ \forall t\in[0,1].\label{eq:gamma contained in rat hyperplane}
\end{equation}
Then (\ref{eq:gamma contained in rat hyperplane}) implies that $\left\langle \mathbf{h},\gamma(t)\right\rangle $
is a polynomial of degree $\kappa$, say
\[
\left\langle \mathbf{h},\gamma(t)\right\rangle =a_{\kappa}t^{\kappa}+..+a_{0},\ t\in\left[0,1\right].
\]
By Dirichlet's theorem for each $n\in\N$, $M=n^{\kappa^{2}+1}$ and
the points 
\[
\left(a_{\kappa},a_{\kappa-1},...,a_{1}\right)\in\R^{\kappa},\ 
\]
we find that there exists $\tilde{\rho}_{n}\in\N$ such that $\tilde{\rho}_{n}\leq n^{\kappa^{2}+1},$
and $\mathbf{p}=\left(p_{1},..,p_{\kappa}\right)$ where \textbf{$p_{i}\in\Z$},
such that 
\begin{equation}
\left|\tilde{\rho}_{n}a_{i}-p_{i}\right|\leq\frac{1}{n^{\kappa+\frac{1}{\kappa}}},\ \forall i\in\left\{ 1,..,\kappa\right\} .\label{eq:bound for polynomial from inner produc for k+1 deg curve}
\end{equation}
We observe that 
\begin{equation}
\left\langle \mathbf{h},\ \left(n^{\kappa}\tilde{\rho}_{n}\right)\gamma\left(\frac{j}{n}\right)\right\rangle =j^{\kappa}\tilde{\rho}_{n}a_{\kappa}+nj^{\kappa-1}\tilde{\rho}_{n}a_{\kappa-1}+..+n^{\kappa}\tilde{\rho}_{n}a_{0},\label{eq:inner_prod after dilation rho_n_timesN^k}
\end{equation}
and that for all $i\in\left\{ 0,1,..,\kappa-1\right\} ,$ we have
\begin{equation}
\left|n^{i}j^{\kappa-i}\tilde{\rho}_{n}a_{\kappa-i}-n^{i}j^{\kappa-i}p_{\kappa-i}\right|=n^{i}j^{\kappa-i}\left|\tilde{\rho}_{n}a_{\kappa-i}-p_{\kappa-i}\right|\underbrace{\leq}_{\eqref{eq:bound for polynomial from inner produc for k+1 deg curve}}\frac{1}{n^{\frac{1}{\kappa}}}.\label{eq:difference of each term of the dilations from integers}
\end{equation}
By (\ref{eq:inner_prod after dilation rho_n_timesN^k}) and (\ref{eq:difference of each term of the dilations from integers})
we deduce for each $j\in\left\{ 1,..,n\right\} $ that
\[
\left|\left(\left\langle \mathbf{h},\left(n^{\kappa}\tilde{\rho}_{n}\right)\gamma\left(\frac{j}{n}\right)\right\rangle -n^{\kappa}\tilde{\rho}_{n}a_{0}\right)-\sum_{i=1}^{n}n^{i}j^{\kappa-i}p_{\kappa-i}\right|\leq\frac{\kappa}{n^{\frac{1}{\kappa}}}.
\]
We denote 
\[
\delta_{n,j}\overset{\text{def}}{=}\left(\left\langle \mathbf{h},\left(n^{\kappa}\tilde{\rho}_{n}\right)\gamma\left(\frac{j}{n}\right)\right\rangle -n^{\kappa}\tilde{\rho}_{n}a_{0}\right)-\sum_{i=1}^{n}n^{i}j^{\kappa-i}p_{\kappa-i},
\]
and we conclude that $\lim_{n\to\infty}\delta_{n,j}=0$ uniformly
in $j$, which implies in turn that
\begin{equation}
\sum_{j=1}^{n}e\left\langle \mathbf{h},n^{\kappa}\tilde{\rho}_{n}\gamma\left(\frac{j}{n}\right)\right\rangle =e\left(n^{\kappa}\tilde{\rho}_{n}a_{0}\right)\sum_{j=1}^{n}e(\delta_{n,j})\asymp n.\label{eq:doesnt decay}
\end{equation}
 We define the sequence $\rho_{n}\overset{\text{def}}{=}n^{\kappa}\tilde{\rho}_{n},$
and we note that $n^{\kappa}\leq\rho_{n}\leq n^{\left(\kappa+1\right)^{2}}$.
Finally, by (\ref{eq:doesnt decay}), we get that $\left\{ \frac{1}{n}\sum_{j=1}^{n}e\left(\left\langle \mathbf{h},\rho_{n}\gamma\left(\frac{j}{n}\right)\right\rangle \right)\right\} _{n=1}^{\infty}$
will not converge to zero.
\end{proof}
\begin{proof}[Proof of Theorem \ref{lem:failure for RND}]
 Let $\phi:\left[0,1\right]^{m}\times\left[0,1\right]\to\R^{d}$
be a family curves, $\left\{ \mathbf{x}_{n}\right\} _{n=1}^{\infty}\subseteq\left[0,1\right]^{m}$.
Then by Dirichlet's theorem for each $n\in\N$, $M=3^{dn}$ and the
points 
\[
\left(\log n\right)\left(\phi(\mathbf{x}_{n},1/n),\ \phi(\mathbf{x}_{n},2/n),...,\phi(\mathbf{x}_{n},1)\right)\in\R^{dn},\ 
\]
there exists $\tilde{\rho}_{n}\in\N$ such that $\tilde{\rho}_{n}\leq3^{dn},$
and $\mathbf{p}=\left(\mathbf{p}_{1},..,\mathbf{p}_{n}\right)$ where
\textbf{$\mathbf{p}_{i}\in\Z^{d}$}, such that 
\begin{equation}
\left|\tilde{\rho}_{n}\left(\left(\log n\right)\phi\left(\mathbf{x}_{n},\frac{j}{n}\right)\right)-\mathbf{p}_{j}\right|_{\infty}\leq\frac{1}{3},\ \forall j\in\left\{ 1,..,n\right\} .\label{eq:dirichlet thm for the curves}
\end{equation}
Denote $\rho_{n}\overset{\text{def}}{=}\tilde{\rho}_{n}\log n$, and
note that $\rho_{n}\to\infty$ and $\rho_{n}\leq\left(3.5\right)^{dn}$
for all large enough $n$. Finally, let $\gamma_{n}\df\rho_{n}\phi(\mathbf{x}_{n},\cdot)$,
and observe by (\ref{eq:dirichlet thm for the curves}) that $\left\{ \pi(\gamma_{n}(j/n))\right\} _{j=1}^{n}$
is contained in a strict subset of $\R^{d}/\Z^{d}$ of measure $\left(\frac{2}{3}\right)^{d}$
for all $n\in\N$, so that $\left\{ \mu_{n}\right\} _{n=1}^{\infty}$
will not equidistribute.
\end{proof}

\appendix

\section{\label{sec:Basic-notions-in o-minim}Basic notions in the theory
of o-minimal structures}

In order to make our paper self contained we discuss some basic notions
in the theory of o-minimal structures which we use in the proof of
Proposition \ref{prop:uniformity on the lower bound and complement}.
For more details on o-minimal structures we refer to the book \cite{tame_topo_and_o_min_structures}.
\begin{defn}
\label{def:structure-}\emph{A} \emph{structure} $\A$ \emph{on the
real field$^{\nameref{enu:footnote o-min}}$}\footnote{\begin{elabeling}{00.00.0000}
\item [{(2)\emph{\label{enu:footnote o-min}}}] The definition given here
is equivalent to the definition given in \cite[Section 2]{Dries_Miller_Geometric_Categories}.
We note that the notion of a structure is more general, see e.g. \cite{tame_topo_and_o_min_structures}.
\end{elabeling}
} is a sequence $\A\df\left(\A_{d}\right)_{d=1}^{\infty}$ where $\mathcal{A}_{d}$
is a subset of the power set of $\R^{d}$, satisfying the following
requirements for all $d,m\in\N$:
\begin{enumerate}
\item \label{enu:boolean}$\A_{d}$ is a Boolean algebra, namely $\emptyset\in\A_{d}$
and $\A_{d}$ is closed under the operation of taking a complement
or by performing a finite union.
\item \label{enu:the-diagonals-}The diagonals $\Delta_{i,j}\df\left\{ \left(x_{1},..,x_{d}\right)\mid x_{i}=x_{j}\right\} $
for all $1\leq i<j\leq d$ belong to $\A_{d}$.
\item \label{enu:prod}For all $A\in\A_{d}$ and $B\in\A_{m}$ it holds
that $A\times B\in\A_{d+m}$ and $B\times A\in\A_{d+m}$.
\item \label{enu:proj}For all $A\in\A_{d+m}$ it holds that $\pi_{d+m,d}(A)\in\A_{d}$,
where $\pi_{d+m,d}:\R^{d+m}\to\R^{d}$ denotes the projection to the
first $d$ coordinates.
\item \label{enu:addition and multiplication}The graphs of addition and
multiplication are in $\A_{3}$.
\end{enumerate}
\end{defn}

\begin{defn}
Let $\A$ \emph{be a structure} \emph{on the real field}. \emph{If
any set that belong to $\A_{1}$ is comprised of a finite union of
points or intervals, then we say that $\A$ is an o-minimal structure.}
\end{defn}

\begin{defn}
Fix a structure on the real field\emph{ $\A=\left(\A_{d}\right)_{d=1}^{\infty}$.}

We say that $A\subseteq\R^{d}$ is definable in $\A$ if $A\in\A_{d}$,
we say that $f:A'\to\R^{m}$ for $A\in\A_{d}$ is definable in $\A$
if the graph of $f$ is in $\A_{d+m}$, and we say that a constant
$c\in\R$ is definable in $\A$ if $\{c\}\in A_{1}$.
\end{defn}

\begin{lem}
\label{lem:basic properties of structure}The following hold for any
structure $\A=\left(\A_{d}\right)_{d=1}^{\infty}$ on the real field\emph{:}
\begin{enumerate}
\item Let $\sigma:\{1,..,d\}\to\{1,..,d\}$ be a permutation. Then $A\in\A_{d}$
is definable if and only if 
\[
A^{\sigma}\df\{(x_{1},..,x_{d})\in\R^{d}\mid(x_{\sigma(1)},..,x_{\sigma(d)})\in A\}
\]
is definable.
\item \label{enu:For-any-structure}For a definable function $f:\R^{d}\to\R$
and a definable set $A\in\A_{d}$, it holds that 
\[
\left\{ \mathbf{x}\in A\mid f(\mathbf{x})=0\right\} ,\ \left\{ \mathbf{x}\in A\mid f(\mathbf{x})>0\right\} 
\]
are definable.
\item If $f:\R^{m}\to\R$ is definable, then the restriction of $f$ to
a definable set $A\in\A_{m}$ is a definable function.
\item If $f:\R^{m}\to\R^{n}$ and $g:\R^{n}\to\R^{k}$ are definable, then
so is the composition $g\circ f$.
\item If $f,g:\R^{m}\to\R$ are definable, then $f+g,$ $f-g,$ $f\cdot g$
are definable, and $f/g$ defined in the domain $\{\mathbf{x}\in\R^{m}\mid g(\mathbf{x})\neq0\}$
is definable.
\item \label{enu:fix parameters}If $c_{1},..,c_{n}\in\R$ are definable
constants and $f:\R^{m+n}\to\R$ is definable, then so is $g(x_{1},...,x_{m})\df f(x_{1},...,x_{m},c_{1},..,c_{n})$.
\end{enumerate}
\end{lem}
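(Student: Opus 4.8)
The plan is to derive all six items from the five axioms of Definition~\ref{def:structure-} by routine set-theoretic manipulations of definable sets; the only genuinely delicate point is item (1), and everything else rests on it. Throughout I will use that, since complements are allowed, $\R^d\in\A_d$ for every $d$, and hence any cylinder $\R^a\times B\times\R^c$ over a definable $B$ is definable.

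For item (1) the obstruction is that axiom~(\ref{enu:proj}) only permits projection onto the \emph{first} coordinates. I would get around this by realising two specific permutations by hand and then invoking the fact that a full cycle together with an adjacent transposition generate the symmetric group $S_d$. First, the cyclic shift $c\colon(x_1,\dots,x_d)\mapsto(x_d,x_1,\dots,x_{d-1})$: take $\R\times A\in\A_{1+d}$, intersect with the diagonal forcing the freshly added coordinate to equal $x_d$, and then apply $\pi_{d+1,d}$, which now deletes precisely the copy of $x_d$ sitting in the last slot; iterating yields every power $c^{j}$. Second, the transposition of the last two coordinates is obtained in the same spirit: pad by one coordinate, pin it to $x_{d-1}$ with a diagonal, conjugate by a suitable power of $c$ so that the redundant coordinate is moved to the last position, delete it with $\pi_{d+1,d}$, and rotate back by a power of $c$. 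Composing such moves according to a word for $\sigma$ in these two generators produces $A^{\sigma}$, and since $\sigma$ is invertible this gives the ``if and only if.'' From item (1) one gets, for free, projection onto an \emph{arbitrary} set of coordinates and cylindrification in any slot, which I will then use without further comment.

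Next I would record two definability facts about constants. Using the graph of addition (axiom~(\ref{enu:addition and multiplication})) and a diagonal, $\{(a,b,c)\colon a+b=c\}\cap\{a=c\}=\{(a,0,a)\}$, so projecting (after a coordinate permutation) onto the middle coordinate gives $\{0\}\in\A_1$; similarly $\{(a,b,c)\colon ab=c\}\cap\{a=b\}=\{(a,a,a^{2})\}$, whose projection onto the last coordinate is $[0,\infty)$, and hence $(0,\infty)=[0,\infty)\setminus\{0\}\in\A_1$. Item (2) then follows by writing, for the (definable) graph $\Gamma_f$ and the projection $\pi$ dropping the last coordinate, $\{x\in A\colon f(x)=0\}=A\cap\pi\bigl(\Gamma_f\cap(\R^d\times\{0\})\bigr)$ and $\{x\in A\colon f(x)>0\}=A\cap\pi\bigl(\Gamma_f\cap(\R^d\times(0,\infty))\bigr)$. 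Item (3) is immediate from $\Gamma_{f|_A}=\Gamma_f\cap(A\times\R)$.

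For items (4)--(6) I would express each relevant graph as a projection of an intersection of cylinders over the given definable graphs. Thus $\Gamma_{g\circ f}$ is the projection eliminating the intermediate variables $y$ from $(\Gamma_f\times\R^k)\cap(\R^m\times\Gamma_g)$; $\Gamma_{f\pm g}$ and $\Gamma_{f\cdot g}$ come from intersecting the cylindrified $\Gamma_f$ and $\Gamma_g$ (sharing the common input) with the graph of $+$, $-$, or $\times$ and projecting away the two value copies, while $\Gamma_{f/g}$ is the same computation with the extra definable constraint, supplied by item (2), that the value of $g$ be nonzero; and for item (6), $\Gamma_g$ is the projection of $\Gamma_f\cap\bigl(\R^m\times\{c_1\}\times\cdots\times\{c_n\}\times\R\bigr)$ deleting the frozen coordinates, each $\{c_i\}$ being definable by hypothesis. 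The hard part is entirely item (1): once arbitrary projections and cylindrifications are in hand, items (2)--(6) are bookkeeping.
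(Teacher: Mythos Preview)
Your proposal is correct and follows the same approach as the paper for the one item the paper actually proves (item~(\ref{enu:For-any-structure})): both deduce the definability of $\{0\}$ and $\R_{>0}$ from the graphs of addition and multiplication intersected with diagonals, and then read off the zero-set and positivity-set as projections of $\Gamma_f\cap(A\times\{0\})$ and $\Gamma_f\cap(A\times\R_{>0})$. The paper explicitly leaves items (1) and (3)--(6) to the reader, so your sketch simply fills in what the paper omits; in particular your treatment of item~(1) via realising a full cycle and an adjacent transposition (which together generate $S_d$) is the standard argument and is sound. One cosmetic difference: the paper arranges its derivation of $\{0\}$ and $\R_{>0}$ so that only projections onto \emph{initial} blocks of coordinates are needed (e.g.\ it passes through $\R\times\{(x,x,x^2)\}$ and $\Delta_{1,4}$ before applying $\pi_{4,1}$), thereby making item~(2) logically independent of item~(1); you instead prove item~(1) first and then freely project onto arbitrary coordinates, which is equally valid.
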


\begin{proof}
The proof of the lemma relies only on Definition \ref{def:structure-}
and follows in a rather straightforward manner. In the following we
only prove (\ref{enu:For-any-structure}), and leave the rest for
the reader.

We first show that $0$ and $\R_{>0}$ are definable. We have
\[
\{(x,x,2x)\mid x\in\R\}=\{(x,y,x+y)\mid x,y\in\R\}\cap\Delta_{1,2},
\]
and 
\[
\{0\}=\pi_{3,1}\left(\{(x,x,2x)\mid x\in\R\}\cap\Delta_{2,3}\right),
\]
which shows that $0$ is definable. To show that $\R_{>0}$ is definable,
we observe that 
\[
\{(x,x,x^{2})\mid x\in\R\}=\{(x,y,xy)\mid x\in\R\}\cap\Delta_{1,2},
\]
and
\[
\R_{>0}=\pi_{4,1}\left(\left(\R\times\{(x,x,x^{2})\mid x\in\R\}\right)\cap\Delta_{1,4}\right)\smallsetminus\{0\}.
\]

Let $f:\R^{d}\to\R$ be definable, let $A\in\A_{d}$, and denote the
graph of $f$ by $\Gamma(f)$. Then 
\[
\left\{ \mathbf{x}\in A\mid f(\mathbf{x})>0\right\} =\pi_{d+1,d}\left(\Gamma(f)\cap\left(A\times\R_{>0}\right)\right),
\]
and 
\[
\left\{ \mathbf{x}\in A\mid f(\mathbf{x})=0\right\} =\pi_{d+1,d}\left(\Gamma(f)\cap\left(A\times\{0\}\right)\right).
\]
are definable.
\end{proof}

\subsection{Formulae}

A convenient way to describe sets in a structure on the real field
is to use formulae which are defined as follows.

\begin{defn}
An atomic formula in a structure on the real field $\A=(\A_{d})_{d=1}^{\infty}$
is any of the following expressions
\begin{itemize}
\item $(x_{1},...,x_{d})\in A$ is an atomic formula, where $A\in\A_{d}$,
\item $f(x_{1},..,x_{d})>0$ and $f(x_{1},..,x_{d})=0$ are atomic formulae,
where $f:A'\to\R$ is a definable function.
\end{itemize}
A formula is defined inductively by the following
\begin{itemize}
\item An atomic formula is a formula
\item If $\phi(x_{1},...,x_{d})$ and $\psi(x_{1},...,x_{d})$ are formulae,
then $\phi\land\psi$, $\phi\lor\psi$, $\lnot\phi$ and $\phi\Rightarrow\psi$
are formulae.
\item If $A\in\A_{m}$, and $\phi(x_{1},..,x_{n},y_{1},..,y_{m})$ is a
formula, then \\
$\exists(y_{1},..,y_{m})\in A(\phi(x_{1},..,x_{n},y_{1},...,y_{m}))$
and $\forall(y_{1},..,y_{m})\in A(\phi(x_{1},..,x_{n},y_{1},...,y_{m}))$
are formulae.
\end{itemize}
\end{defn}

\subsubsection{Description of sets via formulae}

We fix a structure on the real field $\A=(\A_{d})_{d=1}^{\infty}$,
and in the following we denote by $A$ a definable set and we denote
by $f$ a definable function $f:A'\to\R$ for $A'\in\A_{d}$.

Atomic formulae define sets by the following obvious manner:

\[
\begin{array}{ccc}
(x_{1},...,x_{d})\in A & \text{defines} & A,\\
f(x_{1},..,x_{d})>0 & \text{defines} & \{\mathbf{x}\in A'\mid f(\mathbf{x})>0\},\\
f(x_{1},..,x_{d})=0 & \text{defines} & \{\mathbf{x}\in A'\mid f(\mathbf{x})=0\}.
\end{array}
\]
Now assume that the formulae $\phi(\mathbf{x},\mathbf{y})$ and $\text{\ensuremath{\psi}(}\mathbf{x},\mathbf{y})$
define $\Phi\subseteq\R^{n}\times\R^{m}$ and $\Psi\subseteq\R^{n}\times\R^{m}$
correspondingly. Then the following formulae define sets by interpreting
logical symbols with Boolean operations and projections in the following
standard way:
\[
\begin{array}{ccc}
\phi(\mathbf{x},\mathbf{y})\land\psi(\mathbf{x},\mathbf{y}) & \text{defines} & \Phi\cap\Psi,\\
\phi(\mathbf{x},\mathbf{y})\lor\psi(\mathbf{x},\mathbf{y}) & \text{defines} & \Phi\cup\Psi,\\
\lnot\phi(\mathbf{x},\mathbf{y}) & \text{defines} & \R^{n+m}\smallsetminus\Phi,\\
\exists\mathbf{y\in}A(\phi(\mathbf{x},\mathbf{y})) & \text{defines} & \pi_{m+n,n}(\Phi\cap(\R^{n}\times A)),\\
\forall\mathbf{y}\in A(\phi(\mathbf{x},\mathbf{y})) & \text{defines the same set as} & \lnot\ \exists\mathbf{y}\in A(\lnot\phi(\mathbf{x},\mathbf{y})),\\
\phi(\mathbf{x},\mathbf{y})\Rightarrow\psi(\mathbf{x},\mathbf{y}) & \text{defines the same set as} & \lnot\phi(\mathbf{x},\mathbf{y})\lor\psi(\mathbf{x},\mathbf{y}).
\end{array}
\]

By Definition \ref{def:structure-} and Lemma \ref{lem:basic properties of structure}
we deduce that formulae yield definable sets.

\subsection{\label{subsec:The-o-minimal-structure R_an}The o-minimal structure
of restricted analytic functions}

We obtain a partial order on all structures expanding the real field
by declaring that for two structures $\A$ and $\A^{'}$ it holds
that $\A'\leq\A$ if and only if $\A'_{d}\subseteq\A_{d}$ for all
$d\in\N$.

In this paper we are only interested in the structure $\R_{an}$ known
as the field of real numbers with restricted analytic functions, which
is defined to be the smallest structure on the real field in which
all real numbers are definable and all functions $f:\R^{d}\to\R$
which are real analytic in $\left[0,1\right]^{d}$ and vanish in $\R^{d}\smallsetminus\left[0,1\right]^{d}$
(namely, $f$ is the restriction of an analytic function in a neighborhood
of $\left[0,1\right]^{d}$) are definable.

In \cite{Gen_of_tarski_seindberg} it was shown that $\R_{an}$ is
an o-minimal structure.
\begin{lem}
\label{lem:A definability}Let $F:\left[0,1\right]^{N}\times\left[0,1\right]\to\R$
be an analytic function. Then the set $A$ defined in Section \ref{sec:Prelimeneries}\textbf{
}is definable in $\R_{an}$.
\end{lem}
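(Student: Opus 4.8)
The plan is to show that $A$ is carved out of $(0,1]^2$ by a first-order formula in the language of $\R_{an}$, using the closure properties of definable sets recorded in Lemma \ref{lem:basic properties of structure} together with the description-of-sets-via-formulae machinery. Recall from \eqref{eq:definition of A} that
\[
A=\left\{(\epsilon,\delta)\in(0,1]^2\mid \forall\mathbf{x}\in\Sigma_\epsilon\ \forall\xi\in[0,1]\ :\ (\xi-\tfrac{\epsilon}{2},\xi+\tfrac{\epsilon}{2})\cap\mathcal{F}_{\mathbf{x},\delta}\neq\emptyset\right\}.
\]
The only genuinely non-elementary ingredient in this description is the object $\Sigma_\epsilon$, which is defined through the distance function to the set $\Sigma$; everything else is a bounded quantifier over a box plus polynomial (in fact linear) inequalities in the variables and the analytic function $F$.

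The key steps, in order, are as follows. First I would observe that $F$ itself, being analytic on a neighborhood of $[0,1]^{N+1}$, gives rise to a function definable in $\R_{an}$ after restricting to $[0,1]^{N+1}$ (this is the defining property of $\R_{an}$ recalled in Section \ref{subsec:The-o-minimal-structure R_an}); hence by parts (3) and (5) of Lemma \ref{lem:basic properties of structure} the set $\{(\mathbf{x},t)\in[0,1]^{N}\times[0,1]\mid |F(\mathbf{x},t)|\geq\delta\}$ — or more precisely its graph as $\delta$ varies, i.e. $\{(\mathbf{x},\delta,t)\mid F(\mathbf{x},t)^2-\delta^2\geq 0\}$ — is definable, so $\mathcal{F}_{\mathbf{x},\delta}$ appears uniformly in a definable family. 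Second, I would handle $\Sigma$: the set $\Sigma=\{\mathbf{x}\in[0,1]^N\mid \forall t\in[0,1]\ F(\mathbf{x},t)=0\}$ is definable because it is obtained from the definable set $\{(\mathbf{x},t)\mid F(\mathbf{x},t)=0\}$ by a universal quantifier over the definable set $[0,1]$ (equivalently, by a complement and a projection, items \ref{enu:boolean} and \ref{enu:proj} of Definition \ref{def:structure-}). Third, I would note that the Euclidean distance $\mathrm{dist}(\mathbf{x},\Sigma)=\inf\{\|\mathbf{x}-\mathbf{a}\|\mid \mathbf{a}\in\Sigma\}$ is a definable function of $\mathbf{x}$: the relation ``$r=\mathrm{dist}(\mathbf{x},\Sigma)$'' is expressible by the formula $(\exists\mathbf{a}\in\Sigma:\|\mathbf{x}-\mathbf{a}\|^2\le r^2)\wedge(\forall\mathbf{a}\in\Sigma:\|\mathbf{x}-\mathbf{a}\|^2\ge r^2)$, in which the Euclidean norm squared is a polynomial and hence definable, and the quantifiers range over the definable set $\Sigma$; alternatively one invokes the general fact that distance to a definable set is definable. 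Consequently $\Sigma_\epsilon=\{\mathbf{x}\in[0,1]^N\mid \mathrm{dist}(\mathbf{x},\Sigma)\ge\epsilon\}$ is definable uniformly in $\epsilon$, i.e. $\{(\epsilon,\mathbf{x})\mid \mathbf{x}\in\Sigma_\epsilon\}$ is a definable subset of $(0,1]\times[0,1]^N$.

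Finally I would assemble the formula for $A$. The condition ``$(\xi-\tfrac\epsilon2,\xi+\tfrac\epsilon2)\cap\mathcal{F}_{\mathbf{x},\delta}\neq\emptyset$'' is $\exists t\in[0,1]\ (\xi-\tfrac\epsilon2<t<\xi+\tfrac\epsilon2)\wedge(F(\mathbf{x},t)^2\ge\delta^2)$, a formula in the free variables $(\epsilon,\delta,\mathbf{x},\xi)$; prepending the bounded universal quantifiers $\forall\mathbf{x}\ ((\epsilon,\mathbf{x})\in\{(\epsilon,\mathbf{x}):\mathbf{x}\in\Sigma_\epsilon\}\Rightarrow \forall\xi\in[0,1]\ \dots)$ and intersecting with $(0,1]^2$ yields exactly $A$. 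Since every sub-object named here is definable in $\R_{an}$ and since, by the discussion in the subsection ``Description of sets via formulae'' (and Lemma \ref{lem:basic properties of structure}), any set defined by such a formula is definable, we conclude that $A$ is definable in $\R_{an}$. I do not anticipate a serious obstacle; the one point that deserves a careful line is the definability of the distance function $\mathrm{dist}(\cdot,\Sigma)$, which is why I would spell out its defining formula explicitly rather than merely assert it.
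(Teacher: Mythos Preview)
Your proposal is correct and follows essentially the same route as the paper: both arguments show that $\Sigma$ is definable via a universal quantifier, then that the Euclidean distance $\mathrm{dist}(\cdot,\Sigma)$ is definable (the paper writes the graph as $\forall\epsilon>0\,\exists\xi\in\Sigma\,(y^2\le\|\mathbf{x}-\xi\|^2\le(y+\epsilon)^2)$, whereas you use the compactness of $\Sigma$ to write the attained-infimum formula), and finally assemble the first-order description of $A$. The only point worth tightening is the edge case $\Sigma=\emptyset$, where your infimum formula is vacuously false; but since the paper's convention then sets $\Sigma_\epsilon=[0,1]^N$, this is handled trivially and does not affect the argument.
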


\begin{proof}
In order to prove the claim we give an explicit formula which defines
\[
A\df\left\{ \left(\epsilon,\delta\right)\in(0,1]^{2}\mid\forall\mathbf{x}\in\Sigma_{\epsilon},\ \forall\xi\in\left[0,1\right]\text{ it holds that }(\xi-\frac{\epsilon}{2},\xi+\frac{\epsilon}{2})\cap\mathcal{F}_{\mathbf{x},\delta}\neq\emptyset\right\} ,
\]
where\textbf{ }$\Sigma_{\epsilon}\df\left\{ \mathbf{x}\in[0,1]^{N}\mid d(\mathbf{x},\Sigma)\geq\epsilon\right\} $
and $d(\cdot,\Sigma):\R^{N}\to\R$ is the Euclidean distance function
from the closed set $\Sigma\df\left\{ \mathbf{x}\in[0,1]^{N}\mid F(\mathbf{x},\cdot)\equiv0\right\} $.
\\
We now show that $d(\cdot,\Sigma)$ is definable in $\R_{an}$. Indeed,
we note that the set $\Sigma$ is definable since it is defined by
\[
\forall t\in[0,1](F(\mathbf{x},t)=0)).
\]
Therefore we get that the following is a formula in $\R_{an}$ 
\[
(\mathbf{x},y)\in\R^{N}\times\R_{\geq0}\land\left(\forall\epsilon>0,\exists\xi\in\Sigma(y^{2}\leq\left\Vert \mathbf{x}-\xi\right\Vert ^{2}\leq(y+\epsilon)^{2})\right),
\]
which defines the graph of $d(\cdot,\Sigma):\R^{N}\to\R$, showing
that $d(\cdot,\Sigma)$ is definable.

We may now deduce that the following is a formula in $\R_{an}$
\begin{align*}
\phi(\epsilon,\delta,\mathbf{x},\xi)\df(\mathbf{x},\xi,(\delta,\epsilon))\in[0,1]^{N}\times[0,1]\times(0,1]^{2}\land\left(\ d(\mathbf{x},\Sigma)-\epsilon>0\right.\\
\Rightarrow\exists t\in[0,1]((\xi-\frac{\epsilon}{2}\leq t\leq\xi+\frac{\epsilon}{2})\land F(\mathbf{x},t)^{2}-\delta^{2}\geq0)\ ).
\end{align*}
Since $A$ is defined by 
\begin{align*}
\left(\epsilon,\delta\right)\in(0,1]^{2}\land\forall(\mathbf{x},\xi)\in[0,1]^{N}\times[0,1] & \phi(\epsilon,\delta,\mathbf{x},\xi)
\end{align*}
we get that $A$ is definable. 
\end{proof}

\subsection{Needed results from the theory of o-minimal structures}

The following two properties hold for an arbitrary o-minimal structure
$\A=(\A_{d})_{d=1}^{\infty}$.
\begin{thm}
\label{thm:uniform bound on fibers}\cite[Chapter 3, Corollary 3.6]{tame_topo_and_o_min_structures}
Let $A\in\A_{m+n}.$ Then there exists $N\in\N$ such for all $\mathbf{x}\in\R^{m}$
the set $A_{\mathbf{x}}\df\{\mathbf{y}\in\R^{n}\mid(\mathbf{x},\mathbf{y})\in A\}$
has at most $N$ connected components.
\end{thm}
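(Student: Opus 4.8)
The plan is to derive this from the Cell Decomposition Theorem, the central structural result underlying o-minimality, which I would invoke as a black box from \cite{tame_topo_and_o_min_structures}. Recall that \emph{cells} in $\R^{k}$ are defined by recursion on $k$: a cell in $\R$ is a point or an open interval, and a cell in $\R^{k+1}$ is either the graph of a definable continuous function over a cell $C\subseteq\R^{k}$, or a ``band'' $\{(\mathbf{z},t)\mid\mathbf{z}\in C,\ f(\mathbf{z})<t<g(\mathbf{z})\}$ between two such functions, with $f\equiv-\infty$ and/or $g\equiv+\infty$ permitted. I would use two standard facts. First, every cell is definably homeomorphic to $\R^{j}$ for some $0\le j\le k$; in particular every cell is nonempty and connected. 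Second, applied to our $A\in\A_{m+n}$, the Cell Decomposition Theorem yields a finite partition $\R^{m+n}=C_{1}\sqcup\cdots\sqcup C_{r}$ into cells such that $A$ is a union of some of them; and — as is automatic for a cell decomposition, since projecting out coordinates of a decomposition yields a decomposition — this partition refines the projection $\pi\df\pi_{m+n,m}$, so that for each cell $C_{i}$ and each $\mathbf{x}\in\pi(C_{i})$ the fiber $(C_{i})_{\mathbf{x}}\df\{\mathbf{y}\in\R^{n}\mid(\mathbf{x},\mathbf{y})\in C_{i}\}$ is again a cell in $\R^{n}$ (one checks this by induction on $n$ from the graph/band structure), while $(C_{i})_{\mathbf{x}}=\emptyset$ for $\mathbf{x}\notin\pi(C_{i})$.

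Granting this, the remaining step is a short counting argument. Writing $A=C_{i_{1}}\sqcup\cdots\sqcup C_{i_{N}}$ for the cells of the decomposition that are contained in $A$, I would set $N\df N(A)$ and fix an arbitrary $\mathbf{x}\in\R^{m}$. Then $A_{\mathbf{x}}=\bigsqcup_{s=1}^{N}(C_{i_{s}})_{\mathbf{x}}$ is a disjoint union of at most $N$ pieces, each of which is empty or connected by the two facts above. A disjoint union of at most $N$ connected (possibly empty) pieces has at most $N$ connected components: every point of $A_{\mathbf{x}}$ lies in some nonempty piece, each nonempty piece — being connected — lies in a single connected component, so every component is a nonempty union of pieces, and the assignment of a nonempty piece to the component containing it is a surjection from a set of size $\le N$ onto the set of components. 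This gives the bound, with $N$ depending only on $A$ and not on $\mathbf{x}$, which is exactly the content of the statement (finiteness of the number of components for each fixed $\mathbf{x}$ is already clear; the point is the uniformity).

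The substance is entirely hidden in the Cell Decomposition Theorem; the counting at the end is routine and is where I would spend essentially no effort. Were a self-contained treatment demanded, the bulk of the work would go into proving cell decomposition, which is carried out by a simultaneous induction on the ambient dimension together with the assertion that every definable function is, off a lower-dimensional definable set, continuous (indeed $C^{p}$), the base case being the o-minimality axiom in dimension one supplemented by the Monotonicity Theorem for definable functions of a single variable. Since only the statement of the theorem is needed downstream (to obtain the uniform bound $M$ on the number of intervals in Proposition \ref{prop:uniformity on the lower bound and complement}), I would simply cite it.
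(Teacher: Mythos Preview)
Your argument via the Cell Decomposition Theorem is correct and is precisely the standard proof of this fact (indeed, this is how Corollary 3.6 is obtained in \cite{tame_topo_and_o_min_structures}). Note, however, that the paper does not supply its own proof of this statement: it is quoted as a black box from \cite{tame_topo_and_o_min_structures} in the appendix, so there is nothing in the paper to compare against beyond the citation itself.
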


\begin{thm}
\label{thm:choice function}\cite[Chapter 6, Proposition 1.2]{tame_topo_and_o_min_structures}
Let $A\in\A_{m+n}$. Then there is a definable map $f:\pi_{m+n,n}(A)\to\R^{m+n}$
such that the graph of $f$ is contained in $A$.
\end{thm}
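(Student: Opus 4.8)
I read the statement as the \emph{definable selection} theorem. Writing a point of $\R^{m+n}$ as $(\mathbf{x},\mathbf{y})$ with $\mathbf{x}\in\R^{n}$ the coordinates retained by $\pi_{m+n,n}$ and $\mathbf{y}\in\R^{m}$ the ones it forgets, and setting $X\df\pi_{m+n,n}(A)$, the claim is equivalent to producing a definable $g\colon X\to\R^{m}$ with $(\mathbf{x},g(\mathbf{x}))\in A$ for every $\mathbf{x}\in X$; then $f(\mathbf{x})\df(\mathbf{x},g(\mathbf{x}))$ is a definable section of $\pi_{m+n,n}|_{A}$, so its range (the ``graph'' of the statement) lies in $A$. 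The plan is to prove this by induction on the number $m$ of forgotten coordinates. The substance is entirely in the base case $m=1$, where o-minimality of $\A$ is used; the inductive step merely peels off one coordinate at a time.

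\emph{Base case $m=1$.} Let $A\subseteq\R^{n}\times\R$ be definable and $X\df\pi_{n+1,n}(A)$. By Theorem \ref{thm:uniform bound on fibers} there is $N\in\N$ such that for every $\mathbf{x}\in\R^{n}$ the fibre $A_{\mathbf{x}}\df\{t\in\R\mid(\mathbf{x},t)\in A\}$ has at most $N$ connected components; since a connected subset of $\R$ is an interval (possibly degenerate or unbounded), each $A_{\mathbf{x}}$ is a union of at most $N$ such intervals. Split $X$ into the definable pieces
\[
X_{0}\df\bigl\{\mathbf{x}\in X\mid A_{\mathbf{x}}\text{ has empty interior}\bigr\},\qquad X_{1}\df X\smallsetminus X_{0},
\]
each cut out by a first-order formula referring to $A$, hence definable by Lemma \ref{lem:basic properties of structure} and the closure properties of $\A$. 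For $\mathbf{x}\in X_{0}$ the set $A_{\mathbf{x}}$ is finite and nonempty, so $g(\mathbf{x})\df\min A_{\mathbf{x}}$ is well defined, lies in $A_{\mathbf{x}}$, and is definable since its graph $\{(\mathbf{x},t)\mid(\mathbf{x},t)\in A\ \wedge\ \forall s\,((\mathbf{x},s)\in A\Rightarrow t\leq s)\}$ is. For $\mathbf{x}\in X_{1}$, the open set $\mathrm{int}(A_{\mathbf{x}})$ is a nonempty union of at most $N$ open intervals; let $(a(\mathbf{x}),b(\mathbf{x}))$ be its leftmost one, with $a(\mathbf{x})\in\R\cup\{-\infty\}$ and $b(\mathbf{x})\in\R\cup\{+\infty\}$. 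The loci in $X_{1}$ on which $a(\mathbf{x})$, respectively $b(\mathbf{x})$, is finite are definable, and on those loci $a,b$ are definable functions, again cut out by explicit first-order formulae. Now put
\[
g(\mathbf{x})\df
\begin{cases}
\tfrac12\bigl(a(\mathbf{x})+b(\mathbf{x})\bigr), & a(\mathbf{x}),b(\mathbf{x})\ \text{both finite},\\
b(\mathbf{x})-1, & a(\mathbf{x})=-\infty,\ b(\mathbf{x})\ \text{finite},\\
a(\mathbf{x})+1, & a(\mathbf{x})\ \text{finite},\ b(\mathbf{x})=+\infty,\\
0, & a(\mathbf{x})=-\infty,\ b(\mathbf{x})=+\infty.
\end{cases}
\]
In every case $g(\mathbf{x})\in(a(\mathbf{x}),b(\mathbf{x}))\subseteq\mathrm{int}(A_{\mathbf{x}})\subseteq A_{\mathbf{x}}$, each case is governed by a definable condition, and $g$ is assembled from $a,b$ by the operations of Lemma \ref{lem:basic properties of structure} together with a finite (piecewise) union of graphs; hence $g\colon X_{1}\to\R$ is definable. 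Gluing the two pieces yields a definable $g\colon X\to\R$ with $(\mathbf{x},g(\mathbf{x}))\in A$ throughout, and $f(\mathbf{x})\df(\mathbf{x},g(\mathbf{x}))$ is as required.

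\emph{Inductive step.} Let $m\geq2$ and assume the result for $m-1$; set $X\df\pi_{m+n,n}(A)$. Given a definable $A\subseteq\R^{n}\times\R^{m-1}\times\R$, first apply the case $m=1$ to $A$ viewed as a subset of $\R^{n+m-1}\times\R$: this produces a definable $\sigma\colon A'\to\R$, where $A'\df\pi_{n+m,\,n+m-1}(A)\subseteq\R^{n}\times\R^{m-1}$, with $(\mathbf{x},\mathbf{y},\sigma(\mathbf{x},\mathbf{y}))\in A$ for all $(\mathbf{x},\mathbf{y})\in A'$. The set $A'$ is definable and $\pi_{n+m-1,n}(A')=X$, so by the induction hypothesis there is a definable $\rho\colon X\to\R^{m-1}$ with $(\mathbf{x},\rho(\mathbf{x}))\in A'$ for all $\mathbf{x}\in X$. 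Then
\[
f(\mathbf{x})\df\bigl(\mathbf{x},\ \rho(\mathbf{x}),\ \sigma(\mathbf{x},\rho(\mathbf{x}))\bigr),\qquad \mathbf{x}\in X,
\]
is definable on $X$ by Lemma \ref{lem:basic properties of structure} (products and compositions of definable maps), is defined everywhere on $X$ because $(\mathbf{x},\rho(\mathbf{x}))\in A'=\mathrm{dom}(\sigma)$, and satisfies $f(\mathbf{x})\in A$ for all $\mathbf{x}$; this completes the induction.

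\emph{Where the difficulty lies.} The inductive step is pure bookkeeping; the genuine content is the base case, and the only point there demanding care is the definability of the auxiliary objects $\min A_{\mathbf{x}}$, $a(\mathbf{x})$, $b(\mathbf{x})$ and of the case distinction defining $g$. All of these reduce to the assertion that, for a definable $A\subseteq\R^{n}\times\R$, the maps $\mathbf{x}\mapsto\inf A_{\mathbf{x}}$ and $\mathbf{x}\mapsto\sup A_{\mathbf{x}}$ are definable once one separates off, as definable subsets of the parameter space, the loci where these values equal $\pm\infty$ or where $A_{\mathbf{x}}=\emptyset$; this is immediate from writing the defining first-order formulae and invoking closure of $\A$ under $\wedge,\vee,\lnot,\exists$. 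The role of o-minimality is precisely to guarantee that these infima and suprema are attained tamely --- that $\mathrm{int}(A_{\mathbf{x}})$ really is a finite union of intervals, so that ``the leftmost component'' is meaningful --- and Theorem \ref{thm:uniform bound on fibers} supplies exactly this. I expect this verification to be the main (and essentially only) obstacle to turning the sketch into a complete proof.
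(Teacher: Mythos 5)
The paper does not prove this statement; Theorem \ref{thm:choice function} is quoted from van den Dries's book with a citation only. Your argument is the standard definable-choice proof from that source --- induction on the number of coordinates discarded by the projection, with the base case handled by noting that each one-dimensional fibre $A_{\mathbf{x}}\subseteq\R$ is a finite union of points and intervals, so that a canonical element can be singled out by an explicit first-order formula --- and it is correct, including the bookkeeping of the inductive step. You are also right to flag that the statement as printed contains a small slip of notation: the selection map should land in $\R^{m}$, or equivalently should be read as a section of $\pi_{m+n,n}|_{A}$ whose range lies in $A$, and your reinterpretation is the intended one. One cosmetic overreach: in the base case you invoke Theorem \ref{thm:uniform bound on fibers}, but the bare definition of o-minimality already guarantees that every definable subset of $\R$ is a finite union of points and intervals, so the uniform bound on the number of components is not needed there; what actually carries the definability of your selection is that each clause (``$A_{\mathbf{x}}$ has empty interior'', ``$t=\min A_{\mathbf{x}}$'', ``$a(\mathbf{x})$ is finite'', and so on) is expressed by a first-order formula, exactly as you wrote them.
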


An important property possessed by $\R_{an}$ is the property of polynomial
boundedness (see \cite{Gen_of_tarski_seindberg})\textbf{ }which we
define below.
\begin{defn}
\label{def:polynomially bounded} A structure $\A=(\A_{d})_{d=1}^{\infty}$
is \emph{polynomially bounded }if for every definable $f:\R\to\R$
there exists $m\in\N$ such that $f(t)=O(t^{m})$ as $t\to\infty$.
\end{defn}

We have the following straightforward corollary (for a more general
version see \cite[4.13]{Dries_Miller_Geometric_Categories})
\begin{cor}
\label{cor:polynomially bounded from below}Let $f:(0,a)\to\R$ be
strictly positive and bounded function definable in $\R_{an}$. Then
there exist $0<c,\kappa$ and $0<\epsilon<a$ such that $ct^{\kappa}\leq f(t)$
for all $t\in(0,\epsilon)$.
\end{cor}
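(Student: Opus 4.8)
The plan is to deduce the statement from the polynomial boundedness of $\R_{an}$ (Definition \ref{def:polynomially bounded}) by means of the change of variable $t\mapsto 1/t$, which converts the behaviour of $f$ near $0$ into behaviour near $+\infty$, together with the passage to $1/f$, which turns the desired lower bound on $f$ into an upper bound on a definable function. Since $f$ is strictly positive, $1/f$ is a well-defined definable function on $(0,a)$, and composing with the (definable) reciprocal map rescales the problem to one about growth at infinity.

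First I would set $h:\R\to\R$, $h(s)\df 1/f(1/s)$ for $s>1/a$ and $h(s)\df 0$ otherwise, and check that $h$ is definable in $\R_{an}$. This uses the closure properties collected in Lemma \ref{lem:basic properties of structure}: the map $s\mapsto 1/s$ is definable, composition of definable maps is definable, and the reciprocal of a nowhere-vanishing definable function is definable; gluing a definable function with the zero function along the definable set $(1/a,\infty)$ again yields a definable function. The hypothesis that $f$ is strictly positive is exactly what makes $1/f$, and hence $h$, well-defined and definable; note that the boundedness of $f$ is not actually needed for this implication.

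Next, apply the polynomial boundedness of $\R_{an}$ to the definable function $h:\R\to\R$: there exist $m\in\N$ and $C>0$ and $s_{0}\ge 1/a$ such that $h(s)\le C s^{m}$ for all $s\ge s_{0}$. Unwinding the definition of $h$ and using $f>0$, this inequality reads $f(1/s)\ge \tfrac1C\, s^{-m}$ for all $s\ge s_{0}$. Substituting $t=1/s$, we obtain $f(t)\ge \tfrac1C\, t^{m}$ for every $t\in(0,1/s_{0})$, and it remains only to set $c\df 1/C$, $\kappa\df m$, and $\epsilon\df 1/s_{0}$, which satisfies $0<\epsilon<a$ since $s_{0}\ge 1/a$.

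I do not expect a genuine obstacle here: the argument is essentially the standard reciprocal-substitution reduction of growth-at-zero statements to growth-at-infinity statements. The only point requiring (mild) care is the verification that $h$ is genuinely definable in $\R_{an}$, since polynomial boundedness is a hypothesis on definable functions $\R\to\R$; once that is in place the rest is bookkeeping of the change of variables.
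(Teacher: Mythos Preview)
Your proposal is correct and follows essentially the same route as the paper: both arguments reduce the lower bound near $0$ to polynomial boundedness at infinity via the substitution $s=1/t$ and the passage to $1/f$, the only cosmetic difference being how the function is extended to all of $\R$ (the paper sets $f\equiv 1$ outside $(0,a)$ and then forms $g(s)=1/f(1/s)$, whereas you glue $1/f(1/s)$ with the zero function). One trivial slip: to obtain the strict inequality $\epsilon<a$ you should take $s_{0}>1/a$ rather than $s_{0}\ge 1/a$.
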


\begin{proof}
For convenience we extend $f$ to the real line by defining $f(t)=1$
for all $t\notin(0,a)$ (which also gives a function definable in
$\R_{an}$). Since $f$ is non-vanishing, we may consider the function
$g:\R\to\R$ defined by $g(s)=\frac{1}{f(1/s)},$ (which is definable
since $1/s$ for $s\neq0$ is definable, and composition of definable
functions gives a definable function).

Since $\R_{an}$ is polynomially bounded, there is $c'>0$ such that
$\frac{1}{f(1/s)}\leq c's^{m}$ for all $s$ large enough. Therefore,
for any $t$ close enough to $0$ from the right we have that 
\[
f(t)\geq\frac{1}{c'}t^{m}.
\]
\end{proof}
\bibliographystyle{alpha}
\bibliography{revision_final}

\end{document}